\DeclareMathAlphabet{\mathpzc}{OT1}{pzc}{m}{it}
\newtheorem{thm}{Theorem}[section]
\newtheorem{lem}[thm]{Lemma}
\newtheorem{prop}[thm]{Proposition}
\newtheorem{cor}[thm]{Corollary}
\theoremstyle{remark}
\newtheorem{rem}[thm]{Remark}
\theoremstyle{definition}
\newtheoremstyle{Claim}{}{}{\itshape}{}{\itshape\bfseries}{:}{ }{#1}
\theoremstyle{Claim}
\newcommand{\T}{{\mathbb{T}^d}}
\newcommand{\U}{\mathcal{U}}
\newcommand{\M}{\mathcal{M}}
\newcommand{\R}{\mathbb{R}}
\newcommand{\Z}{{\mathbb{Z}}}
\newcommand\Mcal{{\mathcal{M}}}
\newcommand\Ucal{{\mathcal{U}}}
\def\be{\begin{equation}}
\def\ee{\end{equation}}
\def\dive{{\rm div}}
\def\rife#1{\eqref{#1}}
\def\rg{\rangle} 
\def\lg{\langle} 
\def\cP{{\mathcal P}}
\def\PT{\cP(\T)}
\def\vfi{\varphi}
\def\into{\int_{\T}}
\def\dive{{\rm div}}
\def\de{\delta}
\def\vep{\varepsilon}
\def\elle#1{L^{#1}(\T)}
\newcommand{\eps}{\varepsilon}
\numberwithin{equation}{section}
\title{Long time behaviour and turnpike solutions in mildly non-monotone mean field games}
\author{Marco Cirant\footnote{Dipartimento di Matematica, Universit\`a di Padova, Via Trieste 63, 35121–Padova, Italy, email: cirant@math.unipd.it}\quad 
 and  \quad Alessio Porretta\footnote{Dipartimento di Matematica, Universit\`a di Roma Tor Vergata, Via della Ricerca Scientifica 1, 00133 Roma, email: porretta@mat.uniroma2.it.  }}
\begin{document}

\maketitle

\begin{abstract} 
We consider mean field game systems in time-horizon $(0,T)$, where the individual cost functional depends locally on the density distribution of the agents, and the Hamiltonian is locally uniformly convex. We show that, even if the coupling cost functions are mildly non-monotone, then the system is still well posed due to the effect of individual noise. The rate of anti-monotonicity (i.e. the aggregation rate of the cost functions) which can be afforded depends on the intensity of the diffusion and on global bounds of solutions. We give applications to either the case of globally Lipschitz Hamiltonians or the case of quadratic Hamiltonians and couplings having mild growth. 

Under similar conditions, we investigate the long time behavior of solutions and we give a complete description of the ergodic and long term properties of the system. In particular we prove: (i) the turnpike property of solutions in the finite (long) horizon $(0,T)$, (ii) the convergence of the system from $(0,T)$ towards  $(0,\infty)$, (iii) the vanishing discount limit of the infinite horizon problem and the long time convergence towards the ergodic stationary solution. 

This way we extend previous results which were known only for the case of monotone and smoothing couplings; our approach is self-contained and  does not need the use of the  linearized system or of the master equation.
\end{abstract}

%
\section{Introduction}

The theory of mean field games was initiated by J.-M. Lasry and P.-L. Lions since 2006  (\cite{LL06cr1}, \cite{LL06cr2}, \cite{LL07}) in order to describe Nash equilibrium configurations in multi-agents strategic interactions. Similar ideas appeared in \cite{HCM}, at the same time. Mostly peculiar to the approach suggested by Lasry and Lions is to describe the equilibria through solutions of a forward-backward system of PDEs involving a Hamilton-Jacobi-Bellman equation for the value function  of a  single agent and a  Kolmogorov-Fokker-Planck equation for the distribution law  of the whole population. The simplest form of this kind of system is the following 
\begin{equation}\label{mfgT}
\begin{cases}
-u_t - \kappa\Delta u + H(x, Du) = F(x, m(t)), & t 	\in (0, T) \\
m_t - \kappa \Delta m - {\rm div}(m H_p(x, Du)) = 0, & t \in (0, T) \\
m(x, 0)=m_0(x), \qquad u(x, T) = G(x, m(T))
\end{cases}
\end{equation}
where $u(t,x)$ represents the optimal value for a player at time $t$ in state $x$, while $m(t,x)$ represents the density of the distribution law of the controlled dynamical state.
The typical interpretation of the system is that any single player controls his/her dynamical state (in a standard probability space where  a given $d$-dimensional Brownian motion  is defined) 
$$
\begin{cases}
  dX_\tau= \alpha_\tau\, d\tau + \kappa\, dB_\tau &  \\
X_t= x &   
\end{cases}
\rightsquigarrow \quad u(t,x)= \inf_{\{\alpha_\tau\}} {\mathbb E} \left\{ {\small \int_t^T [L(X_\tau, \alpha_\tau)+ F(X_\tau,m_\tau)] + G(X_T, m_T)} \right\}
$$
where $\kappa>0$ and $L,F,G$ represent different costs depending on the control process $\{\alpha_\tau\}$ as well as on the measures $\{m_\tau\}$. Here $m$ enters as an exogeneous
datum and stays fixed in the agents'optimization; so defining  the Hamiltonian $H(x,p):= \sup_{q} [-q\cdot p- L(x,q)]$, under suitable convexity and smoothness conditions the value function satisfy the first equation in \rife{mfgT} and $\alpha_t= -H_p(X_t,Du(t,X_t)) $ is the optimal feedback control. Assuming consistency with  the agents' rational anticipations, at equilibrium it happens that the distribution law of the optimal controlled process coincides with the family of measures  $m_t$   used in the individual optimization. Thus $m$ satisfies the second equation as the law of the optimal process, where $\alpha$  corresponds to the optimal feedback  strategy.
We refer to \cite{CP-CIME} for an extended introduction to mean field games systems. 

In all the above presentation, the state space could be differently chosen, together with possibly boundary effects (reflection or absorption effects at the boundary, for instance)
but we will assume here the simplest, yet instructive case of periodic setting. This means that $x$ belongs to  the flat torus $\T= \R^d/{\Z^d}$.

This is a typical setting to investigate ergodic properties and long time behavior of the controlled dynamics. 
This kind of question is very natural in control theory and quite popular in applications. Many economic models, for instance, expect that if the time horizon is long then  the optimal strategies are nearly stationary for most of the time.  This is referred to as the {\it turnpike property} of optimal control problems, according to a terminology 
introduced by P.A. Samuelson in 1949 (see \cite{DSS}). Even if   the {\it turnpike theory} is a longstanding topic in optimal control, it has attracted  an increasing  renovated interest in the last years from both theoretical and applicative viewpoint: it is impossible here to mention all contributions in this direction, we refer  e.g. to \cite{Grune+al}, \cite{PZ}, \cite{TZ}, \cite{Zu} and references therein.  
\vskip0.3em
When coming at mean field game  systems as in \rife{mfgT}, the long time behavior was investigated  in several papers under the assumption that the cost functions $F,G$ are nondecreasing in $m$. It is well established in the theory that this monotonicity condition gives uniqueness and stability of solutions; under the same condition the turnpike property   and the convergence of solutions towards the stationary ergodic state have been first proved 
in  \cite{CLLP1}, \cite{CLLP2} for quadratic Hamiltonian (i.e. $H(x,p)= |p|^2$). Milder statements  (time average convergence) or stronger statements (exponential pointwise decay estimates) were obtained according to  different  sets of assumptions.
The case of discrete time, finite states system was analyzed in \cite{gomes2010discrete}. Later on, the long time behavior was completely described  in \cite{CaPo} in case  of smoothing couplings and uniformly convex Hamiltonian, and in \cite{P-MTA} for the case of local couplings and globally Lipschitz Hamiltonian.   We also point out that the turnpike pattern is clearly shown in many numerical simulations, see e.g. \cite{Achdou-CIME}.
\vskip0.3em
The purpose of this paper is twofold. On one hand we wish to show that the monotonicity of the couplings $F,G$ can be relaxed to some extent, using the diffusive character of the equations. Otherwise said, we show that {\it the Brownian noise in the individual dynamics can compensate, to some extent, the lack of monotonicity} so that mildly aggregative cost functions $F,G$ may not affect the  uniqueness of solutions and their stability, even in long time. 
This was already suggested by P.-L. Lions in the early stages of the theory (\cite{L-college}) although we could not find any further development of this issue in the literature.
On another hand,  still in the  context of monotone or mildly non-monotone couplings, we revisit both the long time behavior of solutions and the ergodic limits in order to clarify the full picture: turnpike estimates for solutions in $(0,T)$, pointwise limit of the system  as $T\to \infty$, vanishing discount limit in the infinite horizon problem.  

The above   issues are somehow related. In fact, the complete characterization of the long time behavior, namely the convergence of $u^T(t)-\lambda (T-t)$ and $m^T(t)$ at any time $t$, was previously obtained in \cite{CaPo} as a byproduct of the long time convergence of the master equation. The master equation is an infinite dimensional equation 
(defined on time, space and the Wasserstein space of probability measures) which gives the value function $u$ as a feedback of the measure $m$. This equation is not easy
to handle and plays a similar role as the Riccati equation for the feedback operator in control systems. Establishing the long time behavior of the master equation allows one to 
completely describe the behavior of the system but   is actually a hard result which requires much stronger conditions, starting from the  smoothness of the functions $F,G$.
In addition, what is more relevant here,  when the couplings are not monotone, the master equation can not be properly used, since no satisfactory notion of solution has been developed so far outside the   monotone case.  Therefore, motivated by a setting of mildly nonmonotone cost functions $F,G$,  we refine and develop some of the arguments  introduced in  \cite{CaPo} in order to study the long time convergence  - as well as the vanishing discount limit - {\it without any use of the master equation}.  The approach that we propose here is actually simpler and only  relying on the PDE system; this seems to be much more flexible and it is actually promising for many other situations where the master equation still looks untractable (e.g. the case of state constraint  problems, congestion models, etc...).

Let us mention that other nonmonotone mean field games have been considered in several works previously: for example,  second-order problems have been analyzed in \cite{cjde, CGhi, Ci-To}, while \cite{CeCi, GoSe} deal with first-order systems. The works by Ambrose (see \cite{Ambrose} and references therein) show the existence of solutions to second-order systems under smallness conditions on the coupling, though these conditions seem to be depending on the time horizon $T$. Tran considers in \cite{Tran} a setting which is nonmonotone in a broader sense: the coupling in the system is increasing, but the Hamiltonian is not convex. Though he does not address the long-time analysis, his uniqueness results are closer to ours, as he obtains uniqueness under smallness conditions on $H$, which are independent of the time horizon.
\vskip0.3em
Let us now summarize a bit more precisely our results, with reference to the content of the next Sections.
 
 \begin{itemize}
 
 \item  In Section 3, we first show that, for given $L^\infty$- bounds in $(0,T)$ on $m$ and $Du$, there is some $\gamma$ (depending on the diffusion constant $\kappa$, but not on $T$) such that if $F+ \gamma m$ and $G+ \gamma m$ are nondecreasing  then the system \rife{mfgT} has a unique solution. See Theorem \ref{uniq}.
 
 \end{itemize}
 
\noindent A similar result   is also proved for the stationary  ergodic problem  
\begin{equation}\label{MFGergo}
\begin{cases}
\bar \lambda - \kappa \Delta \bar u + H(x, D\bar u) = F(x, \bar m), \\
- \kappa \Delta \bar m - {\rm div}(\bar m H_p(x, D\bar u)) = 0, \\
\int_\T \bar m = 1, \quad \int_\T \bar u = 0.
\end{cases}
\end{equation}
We suggest  two sets of assumptions under which  this kind of result can be applied,   assuming for simplicity that the final cost   is a given function $u_T(x)$:

\begin{itemize}

\item[(A)]  the case of  globally Lipschitz (and locally uniformly convex) Hamiltonian. 

This corresponds to typical control problems with smooth, uniformly convex, Lagrangian cost  and controls in a compact set.  Thanks to the global Lipschitz bound of $H$, in this case the growth of $F$   can be arbitrary,  for instance one can take $F=-\gamma\, m^\alpha$.  Then  the system \rife{mfgT} is well-posed if $\gamma$ is not too large  (see also Remark  \ref{quanto}). 

\item[(B)] the case of superlinear, uniformly convex, Hamiltonian, with quadratic-like growth, and cost function  $F $ which satisfies $0\geq F(x,m)\geq - \gamma\, m^\alpha$ with $\alpha<\frac 2{d}$.

The  threshold  $\frac 2d$ for the growth of the coupling is not new in the context of mean field game systems with quadratic Hamiltonian (see e.g. \cite{CGhi}, \cite{Ci-To}, \cite{Gomes-book}), and we comment this issue in Remark \ref{remgrow}. 

\end{itemize}

Let us  stress that, in the aforementioned results,  the admissible threshold $\gamma$ of anti-monotonicity  depends on the diffusivity constant $\kappa$ and on the initial datum (through $\|m_0\|_\infty$). This latter fact explains very well why  the master equation is hardly usable,  in this context; indeed, there is no general feedback policy of $u$ in the space of probability measures (unless we reduce to the case of monotone couplings).

 \begin{itemize}
 
\item  In Section 4 we show that, in the same context given above, the solutions $(u^T,m^T)$ of \rife{mfgT} satisfy an exponential turnpike property. More precisely,
if $\|Du^T\|_\infty$ is   bounded in $(1,T-1)$ independently of $T$, then there  exists $\omega>0$ and $M$ (independent of $T$) such that 
$$
\| m^T(t)-\bar m\|_\infty + \| Du^T(t)-D\bar u\|_\infty \leq M (e^{-\omega t}+ e^{-\omega (T-t)}) \qquad \forall t\in (1,T-1)\,,
$$
 where $(\bar u, \bar m)$ is a stationary state. See Theorem \ref{longtime} and Corollary \ref{boundT}.
 
 \end{itemize}
 
 In particular, this result applies to the examples (A), (B) mentioned above. It is to be noted that this result is independent of initial and terminal conditions, as is customary in turnpike theory.
 
 \begin{itemize}
 \item In Section 5, we describe the convergence of $u^T(t), m^T(t)$ {\it at any time scale $t$}. This is now influenced by both initial and terminal conditions   (which we assume to be fixed).  Namely, we prove that, given $m_0\in \elle\infty,  u(T)\in W^{1,\infty}(\T)$ and, for instance, a globally Lipschitz, locally uniformly convex, Hamiltonian, there exists $\gamma >0$  such that,  if $F + \gamma  m$ is nondecreasing, then 
 $$
 u^T(t,x)-\bar \lambda (T-t) \mathop{\to}^{T\to \infty} v(t,x)\,,\qquad m^T(t,x) \mathop{\to}^{T\to \infty}  \mu(t,x)
 $$
 locally uniformly in $[0,\infty)\times \T$, where   $(v,\mu)$ is one particular solution of the infinite horizon problem
\be\label{particu}
 \begin{cases}
-v_t + \bar \lambda - \kappa\Delta v + H(x, D v)= F(x,\mu )\,, & \hbox{$t\in (0,\infty)$}
\\
\mu_t- \kappa \Delta \mu -\dive( \mu\, H_p(x,D v))= 0\,,& \hbox{$t\in (0,\infty)$}
\\
\mu(0)= m_0 \,, \quad  v\in L^\infty((0,\infty)\times \T)\,,\,\, Dv \in D\bar u + L^2((0,\infty);\elle2)\,.\quad 
&  
\end{cases}
\ee
 Notice that the convergence is not just for subsequences, but for the whole sequence $(u^T, m^T)$. See Theorem \ref{convuT}.

 \item In Section 6 and 7, we describe the vanishing discount limit for the (discounted) infinite horizon problem
 $$
 \begin{cases}
-u_t + \de u -   \kappa \Delta u + H(x,Du)= F(x,m) & 
\\
m_t -  \kappa  \Delta m- \dive(m\, H_p(x,Du)) = 0 & 
\\
m(0)= m_0\,,\qquad u\in L^\infty((0,\infty)\times \T)\,. 
& \end{cases}
$$
Under similar conditions as before, we prove that the solution $(u_\de, m_\de)$ satisfies
$$
u_\de(t,x)- \frac{\bar \lambda}\de \mathop{\to}^{\de \to 0}   v(t,x) \,\,\,; \qquad \quad m_\de(t,x )\mathop{\to}^{\de \to 0} \mu(t,x) 
$$
 locally uniformly in $[0,\infty)\times \T$, where $(v,\mu)$ is  again one particular solution of  \rife{particu}. 

Once more, the convergence occurs for the whole sequence; we prove that actually the particular solution   selected in the limit satisfies
$$
v(t,x) \mathop{\to}^{t\to \infty} \bar u(x) + \theta\,\,\,, \,\,\,  
$$
where $\theta$ is itself a specific ergodic constant of a linearized problem. See Proposition \ref{ergdisc} and Theorem \ref{vanlim}.
 \end{itemize}
 
In the end, the results in Sections 6-7 establish the commutation property between the limit as $t\to \infty$ and the limit as $\de\to 0$ in the discounted infinite horizon problem. 
Let us  point out that $\bar m$ is the unique invariant measure of problem \rife{particu}; indeed, in this problem   $\mu$ is uniquely determined (while $v$ is unique up to addition of constants) and satisfies
$$
\mu(t,x) \mathop{\to}^{t\to \infty} \bar m(x)\qquad \hbox{uniformly in $\T$.}
$$
To conclude, we recall  that the results described in the above items and contained in Sections 4-7   were previously proved  in \cite{CaPo} for the case of smoothing and monotone couplings $F,G$,  using  the long time convergence of the master equation. Even if we rely on many ideas contained in \cite{CaPo}, we develop here a simpler
program to achieve those results, which avoids both the use of the master equation and the explicit use of the linearized mean field game system. The main benefit of this approach is that it is much less demanding on the functions $F,G$ (say, much cheaper in terms of the required smoothness) and more case-sensitive in terms of initial conditions. We give evidence of this fact by extending the results of \cite{CaPo} to the case of  couplings which are local and  even possibly non-monotone.

Let us point out that   even the case of {\it non local } mildly non-monotone couplings   could be dealt with in a similar way but we did not pursue this extension here for the sake of simplicity.

\section{Standing assumptions}
Let $x$ belong to the flat torus $\T$. We denote by $\PT$ the space of probability measures on $\T$. 
Throughout the whole paper, we suppose that 
$H(x, \cdot)$ is  locally Lipschitz continuous and locally uniformly convex on $\R^d$. Namely, $p\mapsto H(x,p)$ is a $C^2$  function which satisfies
\be\label{liploc}
\forall K>0\,,\,\, \exists \,L_K>0\,:\quad  |H_p(x,p)|\leq L_K \qquad \forall (x,p)\in \T \times \R^d\,: \, |p|\leq K
\ee  
and
\be\label{Hk}
\forall K>0\,,\,\, \exists \,\alpha_K, \beta_K>0\,:\quad \alpha_K\, I  \leq H_{pp}(x,p) \leq \beta_K I \qquad \forall (x,p)\in \T \times \R^d\,: \, |p|\leq K\,.
\ee
The couplings $F,G$ are real valued functions defined on $\T\times [0,\infty)$;  we   suppose, as a standing condition, that they are locally bounded and Lipschitz continuous with respect to $m$:
\be\label{fK}
 \forall \,K>0\,,\,\, \exists \,\, c_K\,,\,\ell_K>0\,: \,  \begin{cases}  
 |F(x,m)|\leq  c_K \,, &   \qquad  \forall x \in \T \,, m,m' \in \R:
\\
 |F(x,m)-F(x,m')|\leq  \ell_K |m-m'|  &  \qquad  |m|, |m'|\leq K\, 
 \end{cases}
\ee
and similarly for $G(x,\cdot)$:
\be\label{gK}
 \forall \,K>0\,,\,\, \exists \,\, \hat c_K\,,\,\hat \ell_K>0\,: \,  \begin{cases}  
 |G(x,m)|\leq  \hat c_K \,, &   \qquad  \forall x \in \T \,, m,m' \in \R:
\\
 |G(x,m)-G(x,m')|\leq  \hat \ell_K |m-m'|  &  \qquad  |m|, |m'|\leq K\, 
 \end{cases}
\ee

The dependence of $H,F,G$ with respect to $x$ is only assumed to  be measurable; as it is commonly said, they are Carath\'eodory functions (measurable in $x$ for any $p$, or $m$, accordingly), and the above conditions \rife{liploc}-\rife{gK} are meant to hold almost everywhere for $x\in \T$. We stress that $H,F$ could depend on $t$ as well (in a measurable way) without additional difficulty, unless for results in which the long time behavior is concerned,  where this dependence could change drastically the picture, of course.

 
\section{Uniqueness for the MFG system}

In  a first result, we show that the MFG system admits a unique solution if the  rate of anti-monotonicity does not exceed some threshold, depending on the global $L^\infty$-bounds of $m,Du$.

To this purpose, we consider the set of (classical) solutions $(u, m)$ to \eqref{mfgT} which satisfy
\begin{equation}\label{MUT}
\sup_{[0, T] \times \T} m\le \mathcal M, \qquad \sup_{[0, T] \times \T} |D u| \le \mathcal U
\end{equation}
for some $\mathcal M, \mathcal U > 0$.


\begin{thm}\label{uniq} Let us set 
$$
X:=\{ (u, m)\in L^\infty((0,T);W^{1,\infty}(\T)) \times L^\infty((0,T)\times \T)\,\, \hbox{satisfying \rife{MUT}}\}.
$$  
There exists  $\gamma > 0$, only depending on $\kappa, \mathcal M, \mathcal U$ (in particular through the constants $L_{\U}, \alpha_\U, \beta_\U$ in \rife{liploc}-\rife{Hk}), such that if 
\[
\hbox{$F(x,m)+ \gamma m$,  $G(x,m)+ \gamma m$ are nondecreasing, \ \ for $ m \in [0, \mathcal M], x\in \T$,}
\]
then \eqref{mfgT} admits at most one solution $(u,m)$ in $X$. 
\end{thm}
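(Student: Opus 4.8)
The plan is to run the Lasry--Lions uniqueness argument, treating the anti-monotonicity of $F,G$ as a perturbation that is absorbed by the diffusion. Let $(u_1,m_1),(u_2,m_2)\in X$ solve \rife{mfgT}; set $\bar u:=u_1-u_2$, $\bar m:=m_1-m_2$, and abbreviate $H_i:=H(x,Du_i)$, $H_{p,i}:=H_p(x,Du_i)$. First I would test the difference of the two Hamilton--Jacobi equations with $\bar m$ and the difference of the two Kolmogorov equations with $\bar u$, integrate over $\T\times(0,T)$, and subtract. The second--order terms cancel by self--adjointness of $\Delta$ on $\T$; the time--boundary terms leave $-\into\big(G(x,m_1(T))-G(x,m_2(T))\big)\big(m_1(T)-m_2(T)\big)$ (using $m_1(0)=m_2(0)=m_0$); and the first--order terms reorganise as $m_1\big(H_1-H_2-H_{p,1}\cdot D\bar u\big)+m_2\big(H_2-H_1+H_{p,2}\cdot D\bar u\big)$, which, by the local uniform convexity \rife{Hk} on $\{|p|\le\U\}$ (applicable since $\|Du_i\|_\infty\le\U$) together with $m_1,m_2\ge0$, is pointwise $\le-\tfrac{\alpha_\U}{2}(m_1+m_2)|D\bar u|^2$. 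Since $F(x,\cdot)+\gamma(\cdot)$ and $G(x,\cdot)+\gamma(\cdot)$ are nondecreasing on $[0,\M]$, so that $\big(F(x,m_1)-F(x,m_2)\big)(m_1-m_2)\ge-\gamma(m_1-m_2)^2$ and similarly for $G$, I would arrive at
\begin{equation}\label{uniqpl1}
\frac{\alpha_\U}{2}\parint (m_1+m_2)|D\bar u|^2\;\le\;\gamma\Big(\|\bar m\|_{L^2(\T\times(0,T))}^2+\|\bar m(T)\|_{L^2(\T)}^2\Big).
\end{equation}

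Next I would estimate $\bar m$ from the equation it solves: subtracting the two Kolmogorov equations,
\[
\partial_t\bar m-\kappa\Delta\bar m-\dive\big(\bar m\,H_{p,2}\big)=\dive\big(m_1(H_{p,1}-H_{p,2})\big),\qquad \bar m(0)=0,
\]
together with $\into\bar m(t)=0$ for every $t$ (mass conservation). By \rife{liploc}--\rife{Hk} on $\{|p|\le\U\}$, the drift $H_{p,2}$ is bounded by $L_\U$, while $g:=m_1(H_{p,1}-H_{p,2})$ obeys $|g|^2\le\M\beta_\U^2\,m_1|D\bar u|^2$. Using the diffusion $\kappa>0$ --- the operator $\partial_t-\kappa\Delta-\dive(\,\cdot\,H_{p,2})$ being uniformly parabolic, hence smoothing and exponentially mixing on zero--average data --- and the vanishing initial datum, I would derive a bound \emph{independent of $T$}:
\begin{equation}\label{uniqpl2}
\|\bar m\|_{L^2(\T\times(0,T))}^2+\|\bar m(T)\|_{L^2(\T)}^2\;\le\;C_1\parint m_1|D\bar u|^2\;\le\;C_1\parint (m_1+m_2)|D\bar u|^2,
\end{equation}
with $C_1=C_1(\kappa,\M,\U)$ depending on $\kappa$, on $\M$, on $L_\U,\beta_\U$ and on the Poincar\'e constant of $\T$, but not on $T$.

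Combining \rife{uniqpl1} and \rife{uniqpl2}, the quantity $A:=\parint(m_1+m_2)|D\bar u|^2$ satisfies $\tfrac{\alpha_\U}{2}\,A\le\gamma C_1\,A$; hence, fixing $\gamma=\gamma(\kappa,\M,\U)$ so small that $\gamma C_1<\tfrac{\alpha_\U}{2}$, one gets $A=0$, and then \rife{uniqpl2} forces $\bar m\equiv0$, i.e. $m_1=m_2=:m$. It remains to see $u_1=u_2$: both solve $-u_t-\kappa\Delta u+H(x,Du)=F(x,m(t))$ with the same terminal datum $u(T)=G(x,m(T))$, so writing $H_1-H_2=b(t,x)\cdot D\bar u$ with $|b|\le L_\U$ (again by \rife{Hk} on $\{|p|\le\U\}$), $\bar u$ solves a linear parabolic equation with bounded coefficients and zero terminal condition, whence $\bar u\equiv0$ by the comparison principle. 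This yields uniqueness in $X$, with $\gamma$ depending only on $\kappa,\M,\U$ (through $L_\U,\alpha_\U,\beta_\U$); the same scheme, applied to the stationary system \rife{MFGergo}, gives the corresponding uniqueness statement.

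The step I expect to be the main obstacle is the $T$--uniform estimate \rife{uniqpl2}: one must control the contribution of the drift $H_{p,2}$, which is bounded but in general not small, using only the diffusion. Testing the $\bar m$--equation against $\bar m$ leaves a term $\tfrac{L_\U^2}{\kappa}\|\bar m\|_{L^2}^2$ that the Poincar\'e inequality on $\T$ can absorb into $\kappa\|D\bar m\|_{L^2}^2$ only when $\kappa$ is comparatively large, which would be too restrictive. For an arbitrary $\kappa>0$ I would instead keep the drift inside the Fokker--Planck operator $\mathcal L:=-\kappa\Delta-\dive(\,\cdot\,H_{p,2})$ and exploit its exponential ergodicity --- equivalently, its spectral gap on zero--average functions --- through the Duhamel representation $\bar m(t)=\int_0^t e^{-(t-s)\mathcal L}\dive(g(s))\,ds$, which produces a constant that depends on $\kappa$ but is uniform in $T$. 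This unavoidable dependence of $C_1$, hence of the admissible threshold $\gamma$, on the diffusivity $\kappa$ is precisely the quantitative form of the principle that individual noise compensates mild anti-monotonicity of the couplings.
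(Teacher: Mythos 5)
Your proposal is correct and follows essentially the same route as the paper: the Lasry--Lions duality identity, absorption of the anti-monotone terms $-\gamma\bar m^2$, and then a $T$-uniform $L^2$ bound on $\bar m$ in terms of $\parint m_i|D\bar u|^2$ obtained from the Fokker--Planck equation with zero initial data via Duhamel/exponential decay, which is precisely what the paper's Lemma \ref{stimaRhoF} (proved by duality with Lemma \ref{lem72}) provides. A small cosmetic difference: you decompose the $\bar m$-equation with drift $H_{p,2}$ and source $m_1(H_{p,1}-H_{p,2})$ rather than the paper's $(H_{p,1}, m_2(\cdot))$, and you close the argument by deducing $\bar m\equiv 0$ directly from $A=0$ and then $u_1=u_2$ by comparison, which avoids the paper's explicit appeal to the strong maximum principle to get $m_2>0$ before concluding $Du_1=Du_2$.
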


\begin{proof}
Let $(u_1, m_1)$ and $(u_2, m_2)$ be two  solutions to \eqref{mfgT} which belong to the set $X$. Convexity of $H$ and the usual duality identity give
\begin{multline*}
- \frac d{dt} \into (u_1 - u_2)(m_1 - m_2) \ge \into m_2 \{ H(x, Du_1) - H(x, Du_2) - H_p(x, Du_2)D(u_1 - u_2)  \} \\ + \into m_1 \{ H(x, Du_2) - H(x, Du_1) - H_p(x, Du_1)D(u_2 - u_1)  \}+ \into [F(x, m_1) - F(x, m_2)][m_1 - m_2].
\end{multline*}
Integrating on $(0, T)$, using \rife{Hk} and the monotonicity assumption on $F(m),G(m)$, we have 
\begin{equation}\label{eq01}
\alpha_\U \int_0^T \into (m_1+m_2) |Du_1 - Du_2|^2 \le \gamma \int_0^T \into (m_1 - m_2)^2 + \gamma \into (m_1(T)-m_2(T))^2.
\end{equation}
The equation for $\rho := m_1 - m_2$ reads
\[
\rho_t - \kappa \Delta \rho - {\rm div}(\rho H_p(x, Du_1)) = {\rm div}\Big(m_2 \big(H_p(x, Du_1) - H_p(x, Du_2) \big) \Big), \quad t \in (0, T) 
\]
with $\rho(0) = 0$. Thus Lemma \ref{stimaRhoF} applies and yields,  by Lipschitz regularity of $H_p$
\begin{align*}
\int_0^T \into (m_1 - m_2)^2 dt  & \le C\, \beta_\U^2  \int_0^T \into (m_2)^2 |Du_1 - Du_2|^2 dt
\\
& \leq 
C\, \beta_\U^2\, \M   \int_0^T \into m_2 |Du_1 - Du_2|^2 dt
\end{align*}
for some constant $C$ only depending on $\kappa$ and $L_\U$ given by \rife{liploc}.

Similarly, using \rife{76} in Lemma \ref{stimaRhoF} we have
$$
\into  (m_1(T) - m_2(T))^2 \leq C \, \beta_\U^2\, \M  \int_0^T \into m_2 |Du_1 - Du_2|^2 dt
$$
Plugging those informations into \eqref{eq01}  we obtain
\[
\int_0^T \into m_2 |Du_1 - Du_2|^2 \le 2 \gamma\, C\, \beta_\U^2\, \alpha_\U^{-1}\M    \int_0^T \into m_2 |Du_1 - Du_2|^2 dt,
\]
and therefore $Du_1 = Du_2$ whenever $\gamma <  \alpha_\U (2C\, \beta_\U^2 \M)^{-1}$ (note that $m_2$ is bounded away from zero on $(0,T)$ by the strong maximum principle). The equalities $u_1 = u_2$ and $m_1 = m_2$ then follow by uniqueness of solutions of the Fokker-Planck equation and of the (backward) Bellman equation.
\end{proof}

\begin{rem}\label{remkappa} The constant $\gamma$ does not depend directly on the time horizon $T$, but only on $\M,\U$. In particular, if those bounds are independent of $T$, then so is $\gamma$ as well.
Note also that $\gamma$ depends on the diffusion coefficient $\kappa$, and it must vanish as $\kappa$ vanishes. Indeed, using bifurcation arguments as in \cite{cjde}, it is possible to prove the existence of multiple solutions (having comparable Lipschitz bounds) for large $T$ and arbitrarily small anti-monotonicity degree, i.e. $F' \approx -\kappa$. Therefore, the constant $C$ in the previous proof must explode as $\kappa \to 0$. 
\end{rem}

\begin{rem} Of course the constant $\gamma$ depends on the $L^\infty$-norm of the initial datum, because  $\|m_0\|_\infty\leq \M$.
\end{rem}

\vskip1em

A similar result can be proved to hold for the stationary ergodic problem \rife{MFGergo}.

\begin{thm}\label{uniq-erg} Let us set 
$$
X:=\{ (u, m)\in  W^{1,\infty}(\T) \times L^\infty( \T)\,:\, \|Du\|_\infty\leq \bar \U\,,\, \|m\|_\infty \leq \bar \M\}.
$$  
There exists $\gamma_0 > 0$, only depending on $\kappa, \bar \M, \bar\U$ (in particular through the constants $L_{\bar \U}, \alpha_{\bar \U}, \beta_{\bar \U}$ in \rife{liploc}-\rife{Hk}), such that if 
\[
\hbox{$F(x,m)+ \gamma_0 \,m$ is nondecreasing, \ \ for $ m \in [0, \bar \M], x\in \T$,}
\]
then \eqref{MFGergo} admits at most one solution $(\bar \lambda, u,m)$ in $X$. 
\end{thm}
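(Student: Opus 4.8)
The plan is to run the duality argument of Theorem~\ref{uniq} for the stationary system~\rife{MFGergo}; the point is that there are now no time-boundary contributions, so the only source of possible non-uniqueness is the anti-monotone part of $F$. Let $(\bar\lambda_1,u_1,m_1)$ and $(\bar\lambda_2,u_2,m_2)$ be two solutions in $X$. Subtracting the two Hamilton--Jacobi equations, multiplying by $m_1-m_2$ and integrating over $\T$, the ergodic constants drop out because $\into(m_1-m_2)=0$; integrating by parts and using the two Kolmogorov equations to handle the Laplacian terms, exactly as in the proof of Theorem~\ref{uniq}, one obtains the identity
$$
\into m_1\,\delta_1+\into m_2\,\delta_2=-\into\big[F(x,m_1)-F(x,m_2)\big](m_1-m_2),
$$
where $\delta_1=H(x,Du_2)-H(x,Du_1)-H_p(x,Du_1)\cdot D(u_2-u_1)$ and $\delta_2=H(x,Du_1)-H(x,Du_2)-H_p(x,Du_2)\cdot D(u_1-u_2)$ are the convexity defects of $H$, which by~\rife{Hk} satisfy $\delta_i\ge\tfrac{\alpha_{\bar \U}}{2}\,|Du_1-Du_2|^2$. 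Since $F(x,\cdot)+\gamma_0 m$ is nondecreasing on $[0,\bar \M]$ we have $-[F(x,m_1)-F(x,m_2)](m_1-m_2)\le\gamma_0(m_1-m_2)^2$, and therefore
$$
\alpha_{\bar \U}\into(m_1+m_2)\,|Du_1-Du_2|^2\le 2\gamma_0\into(m_1-m_2)^2 .
$$

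This is the ergodic counterpart of~\rife{eq01}, with the terminal term absent, and the rest of the argument is the same. Set $\rho:=m_1-m_2$; subtracting the two Kolmogorov equations,
$$
-\kappa\Delta\rho-\dive\!\big(\rho\,H_p(x,Du_1)\big)=\dive\!\Big(m_2\big(H_p(x,Du_1)-H_p(x,Du_2)\big)\Big),\qquad \into\rho=0 .
$$
By the elliptic (stationary) analogue of Lemma~\ref{stimaRhoF}, the Lipschitz bound on $H_p$ from~\rife{Hk}, and $m_2\le\bar \M$,
$$
\into(m_1-m_2)^2\le C\,\beta_{\bar \U}^2\into m_2^2\,|Du_1-Du_2|^2\le C\,\beta_{\bar \U}^2\,\bar \M\into m_2\,|Du_1-Du_2|^2 ,
$$
with $C$ depending only on $\kappa$ and $L_{\bar \U}$. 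Plugging this into the previous inequality and using $m_2\le m_1+m_2$ yields
$$
\alpha_{\bar \U}\into(m_1+m_2)\,|Du_1-Du_2|^2\le 2\gamma_0\,C\,\beta_{\bar \U}^2\,\bar \M\into(m_1+m_2)\,|Du_1-Du_2|^2 ,
$$
so if $\gamma_0<\alpha_{\bar \U}\,(2C\,\beta_{\bar \U}^2\,\bar \M)^{-1}$ this integral vanishes. Since $m_1,m_2>0$ on $\T$ (each is a nonnegative solution of a Kolmogorov equation with unit mass, hence strictly positive by the strong maximum principle), we get $Du_1=Du_2$ a.e.; then $u_1-u_2$ is constant on $\T$ and the normalization $\into u_i=0$ forces $u_1=u_2$. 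Consequently $m_1$ and $m_2$ solve the same Kolmogorov equation with the same mass, so $m_1=m_2$ by uniqueness of the invariant measure, and $\bar\lambda_1=\bar\lambda_2$ follows by reading off the Hamilton--Jacobi equation. The resulting $\gamma_0$ depends only on $\kappa,\bar \M,\bar \U$ (through $L_{\bar \U},\alpha_{\bar \U},\beta_{\bar \U}$ and $C$).

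The step I expect to require the most care is the elliptic estimate for $\rho$: unlike in the parabolic Lemma~\ref{stimaRhoF}, there is no vanishing initial datum --- only $\into\rho=0$ --- and a plain energy estimate does not close when the drift $H_p(x,Du_1)$ is large relative to $\kappa$. The way around this is to exploit that $\kappa Dm_1+m_1\,H_p(x,Du_1)$ is divergence-free (equivalently, to work with $w=\rho/m_1$ and the weighted Poincar\'e inequality associated with the invariant measure $m_1$), exactly as in the proof of Lemma~\ref{stimaRhoF}; this is what keeps $C$, and hence $\gamma_0$, independent of the particular solutions and of any lower bound on $m$. Everything else --- the duality identity, uniqueness of the invariant measure, and positivity via the maximum principle --- is routine.
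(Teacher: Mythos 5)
Your proof is correct and follows essentially the same route as the paper: same duality identity, same observation that the ergodic constants drop out because $\into(m_1-m_2)=0$, same reduction to an $L^2$ bound for $\rho=m_1-m_2$ against $\sqrt{m_2}\,|Du_1-Du_2|$, and the same smallness condition on $\gamma_0$. The one place the paper is more explicit is the elliptic $L^2$ estimate for $\rho$, which it obtains by citing \cite[Corollary~1.3]{CaPo}; your appeal to an ``elliptic analogue of Lemma~\ref{stimaRhoF}'' is the right move, though note that the parabolic Lemma~\ref{stimaRhoF} in this paper is proved by duality with the backward viscous HJ equation (using exponential decay of its kernel) rather than by the divergence-free/weighted-Poincar\'e structure you describe, and the constant $C$ depends on the lower bound of $m_1$ only insofar as that lower bound is itself controlled by $\kappa$ and $L_{\bar\U}$ via Harnack.
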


\proof The proof follows the lines of Theorem \ref{uniq}.  If $(\bar \lambda_1, u_1, m_1)$ and $(\bar \lambda_2, u_2, m_2)$ are two  solutions to \eqref{MFGergo} which belong to  $X$, we consider the duality between the equations of $u_1-u_2$ and $m_1-m_2$. Since $m_1-m_2$ has zero average, the term  with $\lambda_1-\lambda_2$ disappears 
if integrated against $m_1-m_2$. Then one gets
\begin{align*}
&  \into m_2 \{ H(x, Du_1) - H(x, Du_2) - H_p(x, Du_2)D(u_1 - u_2)  \} \\ 
& + \into m_1 \{ H(x, Du_2) - H(x, Du_1) - H_p(x, Du_1)D(u_2 - u_1)  \}
\\
& \qquad\qquad + \into [F(x, m_1) - F(x, m_2)][m_1 - m_2] \leq 0
\end{align*}
which yields
\be\label{torno}
 \alpha_{\bar \U} \into m_2 |Du_1-Du_2|^2 \leq \gamma_0  \into |m_1 - m_2|^2  
\ee
Using \cite[Corollary 1.3]{CaPo} for the equation of $m_1-m_2$, we have, for some $C$ only depending on $\kappa,L_{\bar \U}$,
$$
\|m_1-m_2 \|_{\elle2}^2 \leq C \| m_2(H_p(x, Du_1)-H_p(x, Du_2))\|_{\elle2}^2 \leq C\, \bar \M\, \beta_{\bar U}^2\|\sqrt{m_2}(Du_1-Du_2)\|_{\elle2}^2
$$ 
Plugging this information into \rife{torno} gives that $Du_1-Du_2=0$ if $\gamma_0$ is sufficiently small, only depending on $\kappa, L_{\bar \U}, \alpha_{\bar \U}, \beta_{\bar \U}$. Since $Du_1=Du_2$ it follows that  $m_1=m_2$ (from the second equation) and $u_1=u_2$ from the prescribed normalization condition. Finally,  the first equation gives $\lambda_1=\lambda_2$.  
\qed

\vskip1em
We now give two  examples  of applications of the previous results, namely two settings where the global bounds \rife{MUT} are proved to hold.

\paragraph{(A) Globally Lipschitz Hamiltonians}
\ 
\vskip0.3em
The simplest case where  a global bound in ensured for $m,Du$ is when the Hamiltonian is globally Lipschitz. This is for instance the case when the set of controls of the individual agents lies in a compact set.  For simplicity, we assume  here that the final datum $G$ is $m$-independent, i.e.
\be\label{gut}
G(x, m(T)) = u_T(x) \in W^{1,\infty}(\T).
\ee

\begin{cor}\label{corlip} Assume conditions \rife{Hk}-\rife{fK}, and in addition suppose that $H(x,p)$ satisfies
\be\label{lip}
\exists \,\, L>0\, \,: \, \quad |H_p(x,p)|\leq L \qquad \forall (x,p)\in \T \times \R^d\,.
\ee  
Assume also that $G$ satisfies \rife{gut}.

For any $m_0\in \elle\infty$, there exist  a constant $\M_0$, only depending on $\kappa, L, \|m_0\|_\infty$ and another constant $\gamma_0$, depending on $\kappa, L, \|m_0\|_\infty, \|u_T(x)\|_{W^{1,\infty}(\T)}$ and on the functions $F,H$ (through the constants in \rife{Hk}-\rife{fK} for  a value of $K$ depending on $\kappa, L, \|m_0\|_\infty,\|u_T(x)\|_{W^{1,\infty}(\T)}$) such that if  
\be\label{fgm0}
\hbox{$F(x,m)+ \gamma_0 m$  is nondecreasing, \ \ for $ m \in [0, \mathcal M_0], x\in \T$,}\ee
then the MFG system \rife{mfgT} has a unique solution.
\end{cor}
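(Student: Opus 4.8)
The plan is to deduce Corollary~\ref{corlip} from Theorem~\ref{uniq}, the only extra ingredient being a pair of a priori $L^\infty$ bounds, on $m$ and on $Du$, that are peculiar to globally Lipschitz Hamiltonians and, crucially, independent of $T$. Once such bounds $\M_0,\U$ are in hand, Theorem~\ref{uniq} supplies the admissible anti-monotonicity threshold $\gamma_0$, while existence of a solution follows from a standard fixed-point argument based on the same bounds.

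The bound on $m$ comes first and is the easiest. For any solution $(u,m)$ of \rife{mfgT}, the Kolmogorov equation reads $m_t-\kappa\Delta m-\dive(m\,b)=0$ with $b(t,x)=H_p(x,Du(t,x))$ and $|b|\le L$ by \rife{lip}, \emph{regardless} of any bound on $Du$. Hence, by the smoothing estimates for continuity equations with merely bounded drift (the same kind of estimates underlying the lemma used in the proof of Theorem~\ref{uniq}), one gets $\sup_{[0,T]\times\T}m\le\M_0$ with $\M_0$ depending only on $\kappa$, $L$ and $\|m_0\|_\infty$; this is $T$-independent because for times $\ge 1$ the density is bounded by a universal constant while for short times it is controlled by $\|m_0\|_\infty$.

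Next comes the bound on $Du$. With $m\le\M_0$, \rife{fK} gives $\|F(\cdot,m(t))\|_\infty\le c_{\M_0}$, and \rife{gut} gives $u_T\in W^{1,\infty}(\T)$; moreover $H$ has at most linear growth in $p$ by \rife{lip}, and the diffusion together with the local uniform convexity \rife{Hk} provide the regularizing mechanism. By the gradient estimates for Hamilton--Jacobi equations of this type (compare \cite{P-MTA}), which combine an oscillation bound for $u(t,\cdot)$ uniform in $T$ (via comparison with the ergodic problem), an interior parabolic regularity bound for $t$ away from $T$, and a short-time propagation of the $W^{1,\infty}$ bound of $u_T$ near $t=T$, one obtains $\sup_{[0,T]\times\T}|Du|\le\U$ with $\U$ depending only on $\kappa$, $L$, $\|m_0\|_\infty$, $\|u_T\|_{W^{1,\infty}(\T)}$ and on $H,F$ through the structural constants. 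Setting $K:=\max\{\M_0,\U\}$, the quantities $\M_0,\U,K$ then depend exactly on the data listed in the statement. I expect this step to be the main obstacle: the global Lipschitz character of $H$ is used precisely here, to make the gradient bound \emph{independent of $T$} (a bound growing with $T$ would force $\gamma_0$ to degenerate as $T\to\infty$), and it is the part that genuinely relies on the delicate estimates of \cite{P-MTA} and \cite{CaPo}; note that none of it uses any monotonicity of $F$.

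Finally, existence follows from Schauder's theorem applied to the map $m\mapsto m'$ obtained by solving the backward equation for $u$ with coupling $F(\cdot,m)$ and then the forward equation for $m'$ with drift $H_p(\cdot,Du)$: the convex set $\{m\in C([0,T];\PT):0\le m\le\M_0\}$ is left invariant by the estimate of the previous paragraph (which, for the map, only needs the drift to be bounded by $L$), and the map is continuous and compact by parabolic regularity; again monotonicity of $F$ plays no role, and the fixed point lies in $X$ with $\M=\M_0$, $\U=\U$. For uniqueness, Theorem~\ref{uniq} with these $\M_0,\U$ produces $\gamma_0>0$, depending only on $\kappa,\M_0,\U$, such that if $F(x,m)+\gamma_0 m$ and $G(x,m)+\gamma_0 m$ are nondecreasing on $[0,\M_0]$ then \rife{mfgT} has at most one solution in $X$; since $G(x,m)=u_T(x)$ does not depend on $m$, the condition on $G$ is automatic, so \rife{fgm0} is the only requirement. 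As the two a priori estimates show that \emph{every} solution of \rife{mfgT} lies in $X$ with those $\M_0,\U$, uniqueness holds among all solutions, and together with existence this proves the Corollary.
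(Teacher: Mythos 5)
Your proposal is correct and follows essentially the same route as the paper: first a $T$-independent $L^\infty$ bound on $m$ from the globally bounded drift $H_p$, then a $T$-independent gradient bound on $u$ from parabolic regularity together with the boundedness of $F(\cdot,m)$ and the $W^{1,\infty}$ terminal datum, and finally an appeal to Theorem~\ref{uniq} with those $\M_0,\U_0$. You add two things the paper leaves implicit — the detailed mechanism behind the $T$-uniform gradient bound (oscillation decay plus interior and near-final-time estimates) and an explicit Schauder fixed-point argument for existence — but these are elaborations of, not departures from, the paper's argument, and they are sound.
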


\begin{proof} Since $\|H_p(x, Du)\|_\infty \leq L$ due to \rife{lip}, by standard parabolic regularity we know that there exists $\M_0$, only depending on $\kappa,L, \|m_0\|_\infty$ such that
$$
\|m\|_\infty \leq \M_0\,.
$$ 
Using \rife{gut}
and parabolic regularity, there exists a constant $\U_0$, depending on $\kappa, L$,  $\|u_T\|_{W^{1,\infty}(\T)} $ and on the regularity of $F(x,m)$ for $|m|\leq \M_0$, such that
$$
\| Du\|_\infty \leq \U_0\,.
$$
Since the bounds above are true for all solutions, by Theorem \ref{uniq} there exists $\gamma_0$, depending on $\kappa, \M_0, \U_0$, such that if \rife{fgm0} holds then
there is a unique solution of the MFG system. This concludes the proof of the statement, because  $\M_0, \U_0$ only depend on $\kappa,L, \|m_0\|_\infty, \|u_T(x)\|_{W^{1,\infty}(\T)}$ and on the local behavior of $F, H$ on related compact sets.  
\end{proof}

\begin{rem}\label{quanto} It is possible to quantify a bit more precisely the dependence of $\gamma_0$ on the functions $F,H$. Assume that $H$ satisfies \rife{lip} and  
$$
c_0 \, (1+|p|)^{-1} I_d\leq H_{pp} \leq c_0^{-1} I_d\qquad \forall (x,p)\in \T\times \R^d\,
$$
for some $c_0>0$.  Let  for simplicity $G=0$,  and suppose that  $F(x,m)\simeq -\gamma\, m^p$, with $|F_m(x,m) |\simeq \gamma \, m^{p-1}$ for some $\gamma>0$. 

If $L$ is given by \rife{lip}, then one has 
$$
\|m\|_\infty \leq \mathcal M= C(\kappa, L) \|m_0\|_\infty
$$
for a constant $C$ only depending on $\kappa, L$ (and the dimension $d$). Therefore $\|F(x,m)\|_\infty \lesssim \gamma\, \M^p$, hence
$$
\|Du\|_\infty \leq \mathcal U = C(\kappa, L)\gamma\,  \M^p
$$
for a possibly different constant $C$ still depending only on $\kappa, L, d$.  Coming back to the proof of Theorem \ref{uniq}, with $\alpha_U\simeq \frac{c_0} {\U}, \beta_U= c_0^{-1}$ we need to require  
$$
\gamma  \lesssim C\, \frac1{\U\, \M} \lesssim C\, \frac1{\gamma\, \, \M^{p+1}}\,.
$$
Hence we can estimate 
$$
\gamma_0 \lesssim C(\kappa, L) \, \|m_0\|_\infty^{-(p+1)/2}
$$ 
for some $C$ only depending on $\kappa, L, d$. It is also easy to check that $C\to 0$ as $\kappa \to 0$, so we have 
$$
\gamma_0 \to 0 \qquad \hbox{as $\kappa\to 0$ or $\|m_0\|_\infty\to \infty$.}
$$
One may guess at this point that $\gamma_0$ vanishes as the diffusion $\kappa$ vanishes because norms of solutions are uncontrolled. In fact this seems related to subtler issues, involving the deterioration of the exponential decay of the heat semigroup as $\kappa \to 0$. Indeed, as we already observed in Remark \ref{remkappa}, one can find multiple solutions with controlled norms whenever $F' \approx -\kappa$. Hence, even though one assumes bounds $\U, \M$ on solutions, it is mandatory to require smaller $\gamma_0$ as $\kappa$ becomes smaller. Note also that bifurcation methods allow to construct solutions that are periodic in time; therefore, not only uniqueness fails, but also the turnpike property that will be addressed in the next section, at least for selected families of solutions. See also \cite{GoSe} for the failure of long time stabilization (due to existence of traveling waves) in deterministic mean field games with anti-monotone couplings.
\end{rem}

In a similar way, using the elliptic regularity, there is existence and uniqueness of solutions for  the stationary problem, provided the rate of anti-monotonicity of $F$ is not too large. We skip the proof which follows the same lines as in Corollary \ref{corlip}.

\begin{prop}\label{erg} Assume that $p\mapsto H(x,p)$ is a $C^2$  function which satisfies  \rife{Hk} and \rife{lip}, and that $F(x,m)$   satisfies \rife{fK}.

Then there exists $\gamma_0>0$, only depending on $L, \kappa$ and on the functions $F,H$ (through \rife{fK}, \rife{Hk} for some $K$ only depending on $L,\kappa$), such that  if  $F(x,s) + \gamma_0 s$ is nondecreasing, then the stationary ergodic problem \rife{MFGergo} admits a unique solution $(\bar \lambda, \bar u, \bar m)$. 
\end{prop}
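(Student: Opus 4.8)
The plan is to follow exactly the scheme of Corollary~\ref{corlip}, with elliptic regularity in place of parabolic regularity and Theorem~\ref{uniq-erg} in place of Theorem~\ref{uniq}. The only substantive point is to extract from the global Lipschitz bound \rife{lip} \emph{a priori} $L^\infty$ estimates $\|\bar m\|_\infty\le\bar\M$ and $\|D\bar u\|_\infty\le\bar\U$, valid for \emph{every} solution $(\bar\lambda,\bar u,\bar m)$ of \rife{MFGergo}, where $\bar\M,\bar\U$ depend only on $\kappa$, $L$ and on the local data of $H,F$ at scales controlled by $\kappa,L$; these are precisely the quantities entering the set $X$ of Theorem~\ref{uniq-erg}.

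For the density estimate, by \rife{lip} the drift $H_p(x,D\bar u)$ in the Kolmogorov equation $-\kappa\Delta\bar m-\dive(\bar m\,H_p(x,D\bar u))=0$ is bounded by $L$; combined with $\int_\T\bar m=1$, elliptic regularity (De Giorgi--Nash--Moser estimates, or the stationary form of \cite[Corollary~1.3]{CaPo}) yields $\|\bar m\|_\infty\le\bar\M$ with $\bar\M=\bar\M(\kappa,L,d)$, while the Harnack inequality gives $\bar m\ge c>0$ on $\T$. Hence $\|F(\cdot,\bar m)\|_\infty\le c_{\bar\M}$ by \rife{fK}, so the right-hand side of the ergodic Hamilton--Jacobi equation is bounded. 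Standard estimates for that equation are here particularly simple, since $|H(x,D\bar u)|\le|H(x,0)|+L|D\bar u|$, so it reads $-\kappa\Delta\bar u+b(x)\cdot D\bar u=g(x)$ with $\|b\|_\infty\le L$ and $g$ bounded: one first controls the ergodic constant $|\bar\lambda|\le C$ and then, via $W^{2,p}$ (hence $C^{1,\alpha}$) regularity, the gradient $\|D\bar u\|_\infty\le\bar\U$, with $\bar\U$ depending only on $\kappa$, $L$ and $c_{\bar\M}$. It is exactly the \emph{global} Lipschitz character of $H$ that makes this gradient bound elementary and independent of any time horizon.

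Existence then follows from a routine Schauder fixed point argument based on these bounds: on the convex set $\Kcal=\{m\in\PT:\ \|m\|_\infty\le\bar\M\}$, equipped with the uniform topology, one considers the map sending $m\in\Kcal$ to the density $\bar m$ obtained by first solving the ergodic HJB equation with coupling $F(\cdot,m)$ (normalized by $\int_\T\bar u=0$) and then the associated Kolmogorov equation; the estimates above, together with the Lipschitz dependence of $F$ on $m$ in \rife{fK}, show that this map sends $\Kcal$ into itself, is continuous, and has relatively compact range (by Hölder bounds on the image), so Schauder applies. Finally, any solution satisfies $\|D\bar u\|_\infty\le\bar\U$, $\|\bar m\|_\infty\le\bar\M$, hence belongs to the set $X$ of Theorem~\ref{uniq-erg}, which therefore provides a threshold $\gamma_0>0$ depending only on $\kappa,\bar\M,\bar\U$ --- and so, by the previous step, only on $\kappa$, $L$ and the constants in \rife{Hk}, \rife{fK} at the scale $K=\max\{\bar\M,\bar\U\}$ --- such that if $F(x,s)+\gamma_0 s$ is nondecreasing on $[0,\bar\M]$ then \rife{MFGergo} has a unique solution. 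I do not foresee a genuine obstacle; the step that requires some care is only the bookkeeping that $\bar\lambda$ and $\bar\U$ are controlled through $\kappa$, $L$ and the already-bounded quantity $c_{\bar\M}$, which is precisely what hypothesis \rife{lip} buys us.
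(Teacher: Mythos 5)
Your proof is correct and takes essentially the approach the paper intends: the paper states that the proof of Proposition~\ref{erg} is skipped because it ``follows the same lines as in Corollary~\ref{corlip}'' with elliptic regularity replacing parabolic regularity, and this is exactly what you do (a priori $L^\infty$ bounds on $\bar m$ from the bounded drift $H_p$ and $\int_\T\bar m=1$, then on $F(\cdot,\bar m)$, on $\bar\lambda$, and on $D\bar u$ by elliptic regularity, followed by an appeal to Theorem~\ref{uniq-erg}). You in fact go slightly further than the paper's brief hint by also sketching the Schauder fixed-point argument for existence, which the paper leaves implicit but which is needed for the claim that the problem ``admits a unique solution.''
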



\paragraph{(B) Quadratic Hamiltonians and couplings with mild growth}
\ 
\vskip0.3em
Our second example includes the case of superlinear Hamiltonians, having quadratic-like growth in the gradient. Namely, we assume that $H \in C^1(\T \times \R^d)$ is nonnegative and satisfies,  for some $ c_0 > 0$:
\begin{equation}\label{globHass}
\begin{gathered}
c_0^{-1} I_d \le H_{pp}(x, p) \le c_0 \, I_d  , \\
c_0^{-2}|p|^2  \le H_{p}(x, p) \cdot p - H(x,p) , \\
c_0^{-1}|H_{p}(x, p)|^2 \le H_{p}(x, p) \cdot p - H(x,p), \\
|H(x,p) - H(y,p)| \le c_0\, (1+|p|)  , 
\end{gathered}
\end{equation}
for all $x,y \in \T$, $p, q \in \R^d$.

In order to have global bounds, we need here to restrict the growth of the coupling term. Thus, we suppose that $F$ satisfies, for some $\alpha < \frac2d$ and $c_F > 0$,
\begin{equation}\label{fass}
-c_F m^{\alpha} \le F(x,m) \le 0 \qquad \forall m \ge 0.
\end{equation}
Note that it is sufficient that $F$ be bounded from above. Then, one can assume that it is nonpositive by adding a term $Ct$, for suitable $C$, to 
$u$. 
We also assume here that the final datum $G$ is  more regular, i.e.
\be\label{gut2}
G(x, m(T)) = u_T(x) \in C^2(\T).
\ee

\begin{thm}\label{thmquadH} Assume that \eqref{globHass}, \eqref{fass}  and \rife{gut2} are in force. Then, there exist $\M,\U$ depending on $\kappa, \alpha, c_0,c_F,\|m_0\|_\infty$ such that  \eqref{MUT} holds for all classical solutions to \rife{mfgT}. Moreover, assuming in addition the local Lipschitz assumption  \eqref{fK}, there exists another constant $\gamma_0$ (depending also on $\ell_{\M}$ in \eqref{fK})  such that if  
$$
\hbox{$m\,\, \mapsto \,\, F(x,m) + \gamma_0 m \,\,$ is nondecreasing, for 
$ \,\,m \in [0, \mathcal M], x\in \T,$}
$$
then the MFG system \rife{mfgT} has a unique solution.
\end{thm}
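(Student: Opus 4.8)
The argument has two ingredients: a priori bounds of the form \rife{MUT} valid for \emph{every} classical solution of \rife{mfgT} — which is the substance of the first assertion and the hard part — together with a standard existence argument and Theorem \ref{uniq} for the second assertion. I would begin with an energy estimate. Since $H\geq 0$, the constant $\|u_T\|_\infty$ is a supersolution of the Hamilton--Jacobi equation in \rife{mfgT} with right-hand side $F(x,m)\leq 0$, so by comparison $u\leq\|u_T\|_\infty$ on $[0,T]\times\T$. Differentiating $\into u(t)\,m(t)$ in time and using both equations gives the duality identity
\[
\parint m\,\bigl[H_p(x,Du)\cdot Du-H(x,Du)\bigr]=\into u(0)\,m_0-\into u_T\,m(T)-\parint F(x,m)\,m\,,
\]
whose right-hand side is $\leq 2\|u_T\|_\infty$ because $F\leq 0$, $m\geq 0$ and $u(0)\leq\|u_T\|_\infty$. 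The coercivity inequalities in \rife{globHass} then yield $\parint m\,|Du|^2\leq C_1$ and $\parint m\,|H_p(x,Du)|^2\leq C_2$, with $C_1,C_2$ depending only on $c_0$ and $\|u_T\|_\infty$; in particular the drift $b:=H_p(x,Du)$ satisfies $\sqrt m\,b\in L^2((0,T)\times\T)$.

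The core of the argument, and the step I expect to be the main obstacle, is upgrading this to $m\in L^\infty$, and this is exactly where $\alpha<\tfrac2d$ is consumed. From $\sqrt m\,b\in L^2$, $m_0\in L^\infty$ and the Fokker--Planck equation $m_t-\kappa\Delta m={\rm div}(mb)$, the regularizing effect of the heat operator (Duhamel estimates, together with the energy inequality obtained by testing against powers $m^{p-1}$; cf. \cite{CaPo}) gives $m\in L^{q_0}((0,T)\times\T)$ for every $q_0<\tfrac{d+2}{d}$. Since $\alpha<\tfrac2d$, the interval $\bigl(\tfrac{\alpha(d+2)}{2},\tfrac{d+2}{d}\bigr)$ is nonempty; fixing $q_0$ in it, assumption \rife{fass} gives $F(\cdot,m)\in L^{q_1}$ with $q_1=q_0/\alpha>\tfrac{d+2}{2}$. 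Feeding this into the Hamilton--Jacobi equation and using maximal $L^q$-regularity / the integral Bernstein method for the quadratic-like Hamiltonian (conditions \rife{globHass}, with $u_T\in C^2$; cf. \cite{Ci-To, Gomes-book}) one gets $Du\in L^{2q_1}$ with $2q_1>d+2$, hence a drift $b$ in a subcritical Lebesgue class; running the Fokker--Planck equation once more with this drift and $m_0\in L^\infty$, the De Giorgi--Nash--Moser theory for divergence-form parabolic equations gives $m\in L^\infty((0,T)\times\T)$, with a bound $\M$ controlled by $\kappa,\alpha,c_0,c_F,\|m_0\|_\infty$ (and $\|u_T\|_{C^2}$). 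Note that at $\alpha=\tfrac2d$ these exponents become critical and the scheme breaks down, consistently with the known sharpness of the $2/d$ threshold.

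Once $m$ — hence $F(\cdot,m)$, by \rife{fK} — is bounded, the gradient bound $\|Du\|_\infty\leq\U$ follows from the classical Bernstein estimate for viscous Hamilton--Jacobi equations with quadratic gradient growth and $C^2$ terminal data (here $\ell_\M$ enters, through the regularity required of the source), and then $u$ itself is bounded as the solution of a backward heat equation with bounded source and data; so $(u,m)\in X$. Since this applies to an arbitrary classical solution, the first assertion is proved. For the second, existence of a classical solution follows from a Leray--Schauder continuation argument on the family of systems with coupling $\sigma F$, $\sigma\in[0,1]$: the a priori bounds above are uniform in $\sigma$ because $\sigma F\leq 0$ and $|\sigma F|\leq c_F m^\alpha$, so the solvable decoupled problem at $\sigma=0$ connects to a solution at $\sigma=1$, while \rife{fK} supplies the continuity of the maps involved. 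Finally, with $\M,\U$ now fixed, Theorem \ref{uniq} provides $\gamma>0$ depending only on $\kappa,\M,\U$ such that any two solutions in $X$ coincide provided $F(x,m)+\gamma m$ — and $G(x,m)+\gamma m=u_T(x)+\gamma m$, which is automatically nondecreasing — is nondecreasing on $[0,\M]$; it then suffices to take $\gamma_0:=\gamma$.
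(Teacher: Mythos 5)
Your first energy estimate contains a genuine gap, and unfortunately it is placed exactly where the heart of the argument lies. In the duality identity
\[
\parint m\,[H_p(x,Du)\cdot Du-H(x,Du)]=\into u(0)\,m_0-\into u_T\,m(T)-\parint F(x,m)\,m,
\]
you claim the right-hand side is $\le 2\|u_T\|_\infty$ ``because $F\le 0$, $m\ge 0$.'' But with $F\le 0$ and $m\ge 0$ you have $Fm\le 0$, hence $-\parint Fm\ge 0$: the aggregative term contributes \emph{positively}, and a priori it is $\lesssim \parint m^{\alpha+1}$, which is precisely the quantity that needs to be controlled. So the estimate $\parint m|Du|^2\le C_1$ does not follow; nothing in the subsequent bootstrap (the $L^{q_0}$ bound on $m$, the $L^{2q_1}$ bound on $Du$, De Giorgi--Nash--Moser) can get off the ground. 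This is exactly the difficulty that makes the theorem nontrivial and where the restriction $\alpha<\frac2d$ is consumed. The paper handles it by \emph{not} dualizing the Hamilton--Jacobi equation against $m$ itself but against an auxiliary adjoint density $\rho$ solving the Fokker--Planck equation \eqref{dualsys} with smooth, controlled initial data: then $-\int F\rho\le \|F\|_{L^\infty(L^{1/\alpha})}\|\rho\|_{L^1(L^{p'})}$ with $\|F\|_{L^\infty(L^{1/\alpha})}\le c_F$ (from $\int m=1$), and $\|\rho\|_{L^1(L^{p'})}$ is absorbed via Lemma \ref{stimaRho1} and the coercivity $c_0^{-1}|H_p|^2\le H_p\cdot p-H$. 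One could try to repair your argument by applying Lemma \ref{stimaRho1} to $m$ directly, but the constant there is $T$-dependent, whereas the theorem requires $\M,\U$ independent of $T$; the paper sidesteps this by running the duality on unit-length time windows and proving a geometric decay of ${\rm osc}_\T u(T-n)$ rather than a single global energy bound.

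Once past this obstruction, your bootstrap scheme (FP regularization $\Rightarrow m\in L^{q_0}$ with $q_0<\frac{d+2}{d}$, then maximal regularity/Bernstein for HJB $\Rightarrow Du\in L^{2q_1}$ with $2q_1>d+2$, then De Giorgi--Nash--Moser $\Rightarrow m\in L^\infty$, then Bernstein $\Rightarrow Du\in L^\infty$) is in the right spirit and exploits $\alpha<\frac2d$ in the same critical-exponent count as the paper. But the paper's actual route differs: after the oscillation decay it proves a uniform $C^\beta$ bound on $u$ (Step 2), then an $L^q((T-n-1,T-n);L^{2p})$ bound on $Du$ via Gagliardo--Nirenberg interpolation with the $C^\beta$ norm and maximal $L^q$--$L^p$ regularity on a \emph{localized-in-time rescaled} function $\tilde u=(2-t)[u-\max_\T u(2)]$ (Step 3), and only then passes to $L^\infty$ bounds on $m$ and $Du$ (Step 4). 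This localized-in-time argument is essential to obtain constants independent of $T$, which your global-in-time energy approach does not deliver even after the sign issue is corrected. Finally, the Leray--Schauder continuation sketch for existence and the invocation of Theorem \ref{uniq} for uniqueness are fine in outline, but note that the paper treats the uniqueness conclusion as immediate from Theorem \ref{uniq} once \eqref{MUT} is established and does not detail an existence argument within the proof of this theorem.
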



We divide the proof of the estimate \eqref{MUT} in several steps. The uniqueness statement is then a straightforward consequence of Theorem \ref{uniq}.
\vskip0.3em
{\it Step 1. Estimates on the oscillation of $u(t)$. } Denote, as usual, ${\rm osc}_{\T} u(t) = \max_{x \in \T} u(x,t) - \min_{x \in \T} u(x,t)$. We claim that there exists $C_0 > 0$ depending on $c_0, c_F, d, \alpha$ such that
\begin{equation}\label{Step1}
{\rm osc}_{\T} u(T-n) \le {\rm osc}_{\T} u_T + 2 C_0\qquad \forall n \in \mathbb N.
\end{equation}
The estimate will be a consequence of the following ``oscillation decay'' 
inequality
\begin{equation}\label{osc0}
{\rm osc}_{\T} u(T-k) \le \frac12 {\rm osc}_{\T} u(T-k+1) +  C_0  \qquad \forall k \in \mathbb N.
\end{equation}
Indeed, given \eqref{osc0}, by induction
\[
{\rm osc}_{\T} u(T-n) \le \frac{1}{2^n} {\rm osc}_{\T} u_T + C_0 \sum_{k=0}^{n-1} \frac{1}{2^k} \le {\rm osc}_{\T} u_T + 2 C_0,
\]
which is \eqref{Step1}. We now turn to \eqref{osc0}. It will be sufficient to prove it for $T-k =0$, being the case $T-k \neq 0$ identical; it suffices indeed to perform a time-shift, which is allowed by the following 
crucial observation: since $\int m(t) = 1$ for all $t$, by \eqref{fass}
\begin{equation}\label{Fok}
\| F(\cdot, m(\cdot, t)) \|_{L^p(\T)} \le c_F, \qquad p = \frac1\alpha > \frac d 2.
\end{equation}
To obtain the oscillation estimate, we will argue by duality. For a review of the so-called adjoint method and its application to mean field games, see \cite{Gomes-book} and the recent developments in \cite{CiGolast}.

 For a smooth probability density $\rho_0 \in C^\infty(\T)$ let $\rho$ be the classical solution to the Fokker-Planck equation
\begin{equation}\label{dualsys} 
\begin{cases}
\rho_t - \Delta \rho - {\rm div}(\rho H_p(x, Du)) = 0, & t \in (0, 1) \\
\rho(x, 0)=\rho_0(x).
\end{cases}
\end{equation}
Then, as $p' = \frac{p}{p-1} < \frac d{d-2}$, by means of Lemma \ref{stimaRho1} there exists $C_\eps$ depending on $\eps, d,\alpha$ (but independent of $\rho_0$) such that
\begin{equation}\label{rhoest}
\| \rho \|_{L^1((0,1); L^{p'}(\T))} \le \eps \int_0^1 \int_{\T} |H_p(x, Du)|^2 \rho + C_\eps \le \eps c_0 \int_0^1 \int_{\T} [H_{p}(x, Du) \cdot Du - H(x,Du)] \rho + C_\eps.
\end{equation}
for $\eps > 0$ that will be chosen below; the second inequality is just a 
consequence of the third assumption \eqref{globHass} on $H$.

We may now add a constant to $u$ so that $\max_\T u(x, 1) = 0$. Note that by the maximum principle, $u(x, t) \le 0$ for all $t \le 1$, $x \in \T$. Using the duality between the equations of $u$ and $\rho$, and estimate \rife{rhoest},  we obtain
\begin{multline}\label{dualo}
\int_0^1 \int_{\T} [H_p(x, Du) Du - H(x, Du)] \rho = \int_\T u(0) \rho_0 - \int_\T u(1) \rho(1) - \int_0^1 \int_{\T} F(x,m) \rho  \\ 
 \le {\rm osc}_{\T} u(1) + \| F(x,m) \|_{L^\infty((0,1); L^{p}(\T))} \| \rho \|_{L^1((0,1); L^{p'}(\T))}   \\
 \le {\rm osc}_{\T} u(1) +  c_0 c_F \eps \int_0^1 \int_{\T} [H_{p}(x, Du) 
\cdot Du - H(x,Du)] \rho + c_F C_\eps. 
\end{multline}
Thus, choosing $\eps = (2c_0 c_F)^{-1}$, we get
\begin{equation}\label{Hp2est}
\int_0^1 \int_{\T} [H_{p}(x, Du) \cdot Du - H(x,Du)] \rho \le 2  \, {\rm osc}_{\T} u(1) + C_2.
\end{equation}

Pick now $x_0, y_0 \in \T$ such that $ {\rm osc}_{\T} u(0) = u(y_0, 0) - u(x_0, 0)$ and let $z = y_0 - x_0$. Setting $\hat \rho (x,t) = \rho(x - (1-t)z, t)$, $\hat \rho$ solves
\[
\begin{cases}
\hat \rho_t - \Delta \hat \rho - {\rm div}\big(\hat \rho H_p(x - (1-t)z, Du(x - (1-t)z,t))\big)- {\rm div}(\hat \rho z ) = 0, & t \in (0, T) \\
\rho(x, 0)=\rho_0(x - z).
\end{cases}
\]
Testing the equation of $\hat \rho$ by $u$ and the equation of $u$ by $\hat \rho$ and integrating by parts we obtain
\begin{multline*}
\int_0^1 \int_{\T} [-H_p(x - (1-t)z, Du(x - (1-t)z,t)) \cdot Du(x,t) - z \cdot Du(x,t) + H(x, Du(x,t))] \hat \rho(x,t)  \\ = - \int_\T u(x,0) \rho_0(x-z) + \int_\T u(1) \rho(1) + \int_0^1 \int_{\T} F(x,m) \hat \rho.
\end{multline*}
Denote for simplicity $y = x + (1-t)z$. After the change of variables $x - (1-t)z \mapsto x$, add \eqref{dualo} to get
\begin{multline*}
\int_0^1 \int_{\T} [-H_p(x, Du(x,t)) \cdot (Du(y,t) - Du(x,t))  + H(y, Du(y,t))  - H(x, Du(x,t))]  \rho(x,t) \\ =  
\int_0^1 \int_{\T}  z \cdot Du(y,t)  \rho(x,t) + 
\int_\T [u(x,0) - u(x+z,0)] \rho_0(x)  + \int_0^1 \int_{\T} F(x,m) (\hat \rho-\rho).
\end{multline*}
Applying now the first and the fourth assumption in \eqref{globHass} to the left-hand side of this resulting inequality yields
\begin{multline*}
\frac{c_0^{-1}}2 \int_0^1 \int_{\T} |Du(y,t) - Du(x,t)|^2 \rho(x,t) +\int_\T [u(x+z,0)-u(x,0)] \rho_0(x)\\  \le
c_0 \int_0^1 \int_{\T}  (1+|Du(y,t)| ) \rho(x,t) + \int_0^1 \int_{\T}  z \cdot Du(y,t)  \rho(x,t) + 
\int_0^1 \int_{\T} F(x,m) (\hat \rho-\rho).
\end{multline*}
Using now Young's and H\"older's inequalities, there exists $c_1$ (only depending on $c_0$)  such that
\begin{multline*}
\int_\T [u(x+z,0)-u(x,0)] \rho_0(x)\\  \le
\frac1{8c_0^2} \int_0^1 \int_{\T}  |Du|^2  \rho + 
2\| F(x,m) \|_{L^\infty((0,1); L^{p}(\T))} \| \rho \|_{L^1((0,1); L^{p'}(\T))} + c_1 \le \\
\frac1{8} \int_0^1 \int_{\T}  [H_{p}(x, Du) \cdot Du - H(x,Du)]   \rho + 
2\| F(x,m) \|_{L^\infty((0,1); L^{p}(\T))} \| \rho \|_{L^1((0,1); L^{p'}(\T))} + c_1,
\end{multline*}
as a consequence of the second assumption in \eqref{globHass}. Finally, we plug in \eqref{rhoest} and \eqref{Hp2est}, and by an appropriate choice 
of $\eps$ small we obtain
\[
\int_\T [u(x+z,0)-u(x,0)] \rho_0(x)  \le \frac12 {\rm osc}_{\T} u(1) + C_3,
\]
for some $C_3$ depending on $c_0, c_F, \alpha, d$. Choosing now a sequence of $\rho_0$ converging (weak-*) to $\delta_{x_0}$ we obtain the desired 
estimate
\[
{\rm osc}_{\T} u(0) = u(y_0, 0) - u(x_0, 0) \le \frac12 {\rm osc}_{\T} u(1) + C_3.
\]

\medskip

{\it Step 2. Estimates on a $C^\beta$-norm of $u$. } We claim that there exists $\beta \in (0,1)$ and $C > 0$ depending on $c, c_F, \alpha, d$ such that for all $n \in \mathbb N$
\be\label{uTn}
\|u(\cdot, t) - \max_{\T} u(T-n) \|_{C^\beta(\T)} \le C \qquad \text{for all $t \in [T-(n+2), T-(n+1)]$,}
\ee
and the inequality can be extended up to $t = T$ when $n=0$. 

\vskip0.2em
First, $L^\infty$-bounds on $z(x,t) := u(x, t) - \max_{\T} u(T-n)$ can be obtained by duality as in Step 1 (and the argument is even simpler). Note that $\|z(T-n) \|_{\infty} \le C$ by \eqref{Step1}, independently on $T, n$. We then proceed assuming without loss of generality that $[T-(n+2), T-n] = [-1,1]$. Since $z(1) \le 0$ and $F \le 0$, we have $z \le 0$ on $\T \times (-\infty,1]$ by the maximum principle.
Let now $\rho$ be as in \eqref{dualsys}. Arguing as before by duality (see equations \eqref{rhoest}-\eqref{Hp2est}), there exists a constant $\overline C$ depending on $c_0, c_F, \|z(1)\|_\infty$ (but not depending on $\rho_0$) such that
\[
\left|\int_0^1 \int_{\T} F(x,m) \rho\right|  \le \overline C,
\]
and therefore
\begin{multline*}
\int_\T |z(0)| \rho_0 = -\int_\T z(0) \rho_0 =\\
  -  \int_0^1 \int_{\T} [H_p(x, Dz) Dz - H(x, Dz)] \rho - \int_\T z(1) \rho(1) - \int_0^1 \int_{\T} F(x,m) \rho \le \|z(1)\|_\infty + \overline C.
\end{multline*}
Varying $\rho_0$ yields a bound on $\|z(\tau)\|_\infty$ for $\tau = 0$. 
Then, varying $\tau \in [-1,1)$ in the initial condition $\rho(x, \tau)=\rho_0(x)$ for $\rho$ allows to extend such $L^\infty$-bounds for $z$ to the whole cylinder $\T \times [-1,1]$.

Once sup-bounds on $u(\cdot, t) - \max_{\T} u(T-n)$ are established, using the uniform integrability of $F(m)$ in \eqref{Fok}, a control on a H\"older semi-norm follows by standard results for quasi-linear parabolic equations with quadratic growth in the gradient, see e.g. \cite[Theorem V.1.1]{LSU}.

\medskip

{\it Step 3. Estimates on the $L^q(L^p)$-norm of $|Du|^2$. } Let $q > 1$. 
We claim that there exists $C > 0$ depending on $c_0, c_F, \alpha, d, q$ such that for all $n \in \mathbb N$
\begin{equation}\label{Step3}
\|Du \|_{L^{2q}(\, (T-(n+1), T-n)\, ; L^{2p}(\T))} \le C.
\end{equation}
We prove the inequality in the case $(T-(n+1), T-n) = (0,1)$ and $T \ge 
2$; constants below will not depend on $T$ nor $n$, so the validity of \eqref{Step3} for $t \in [0,T-1]$ will be a straightforward consequence. Some comments regarding  the interval $t \in [T-1,T]$, that is for $t$ close to the time-horizon will be made below.

First, $\tilde u(x,t) = (2-t)[u(x,t) - \max_{\T} u(x,2)]$ solves $\tilde u(x,2) = 0$ and
\[
-\tilde u_t -  \Delta \tilde u = - (2-t)H\left(x, \frac{D \tilde u}{2-t}\right) + (2-t) F(x, m(t))
\]
Therefore, for any $q > 1$, by maximal $L^q-L^p$ regularity for linear parabolic equations  (see e.g. \cite{HP}), there exists $C_q > 0$ depending 
on $q, p, d$ such that
\[
\int_0^2 \|\tilde u(t)\|^q_{W^{2,p}(\T)} \le C_q \int_0^2  \Big\|(2-t) H\left(x, \frac{D \tilde u}{2-t}\right)\Big\|^q_{L^{p}(\T)} + \|F\|^q_{L^p(\T)} dt
\]
Since $H(x,p)$ has quadratic growth in the $p$-variable, we may adjust $C_q$, and use \eqref{Fok} to obtain
\begin{equation}\label{eq245}
\int_0^2 \|\tilde u(t)\|^q_{W^{2,p}(\T)} \le C'_q  \left( \int_0^2  \|D\tilde u(t)\|^{2q}_{L^{2p}(\T)}\frac{dt}{(2-t)^q} + 2c_F^q \right).
\end{equation}
We now recall the following Gagliardo-Nirenberg type inequality
\begin{equation}\label{GN}
\|D\tilde u(t)\|_{L^{2p}(\T)}\leq C\|\tilde u(t)\|_{W^{2,p}(\T)}^\theta\|\tilde u(t)\|_{C^{\beta}(\T)}^{1-\theta},
\end{equation}
which holds for $\theta\in\left[\frac{1-\beta}{2-\beta},1\right)$ and
\begin{equation*}
\frac{1}{2p}=\frac{1}{d}+\theta\left(\frac{1}{p}-\frac{2}{d}\right)-(1-\theta)\frac{\beta}{d}.
\end{equation*}
Then we pick $\beta > 0$ as in the previous Step 2. Note that since $p > \frac d2$, we have $\theta < 1/2$, and by Step 2
\[
\frac{ \|D\tilde u(t)\|^{2q}_{L^{2p}(\T)} }{(2-t)^{q}} \leq \frac{C \|\tilde u(t)\|_{W^{2,p}(\T)}^{2q \theta} \|\tilde u(t)\|_{C^{\beta}(\T)}^{q(2-2\theta)} }{(2-t)^{q}}  \le C' (2-t)^{q(1-2\theta)} \|\tilde u(t)\|_{W^{2,p}(\T)}^{2q\theta}.
\]
Thus, plugging the previous inequality into \eqref{eq245} yields
\[
\int_0^2 \|\tilde u(t)\|^q_{W^{2,p}(\T)} dt \le C_4  \left( \int_0^2  \|\tilde u(t)\|_{W^{2,p}(\T)}^{2q\theta} dt + 1 \right)
\]
for all $t \in (0,2)$. Since $2q\theta < q$, an estimate on $L^q((0, 2); W^{2,p}(\T))$ for $\tilde u$ follows. In turn, back to \eqref{GN}, this gives bounds in $L^{2q}((0, 2); L^{2p}(\T))$ for $D \tilde u$. Finally, claimed bounds in $L^{2q}((0, 1); L^{2p}(\T))$ for $D u$ are straightforward.

In the interval $[T-1,T]$ there is no need to localize in time with the term $(2-t)$ and normalize the sup-norm, i.e. it is sufficient to perform the very same argument with $\tilde u(x,t) = u(x,t)$ (and use that $u(T)$ is $C^2$).

\medskip

{\it Step 4. Estimates on the sup-norm of $Du$ and $m$. } By the assumptions on $H_p$, which has linear growth in $|p|$, the previous estimate \eqref{Step3} reads
\[
\|H_p(x, Du) \|_{L^{2q}(\, (T-(n+1), T-n)\, ; L^{2p}(\T))} \le C.
\]
for any $q > 1$ and for some $p > d/2$. Hence,  $m$ solves a linear equation in divergence form with drift $H_p(x, Du)$, that in turn satisfy the previous integrability condition. Since $\|m(t)\|_{\elle1}=1$ for  all $t$,   it is standard the existence of $\mathcal M$ (independent of $T$) such that
\[
\max_{[0, T] \times \T} m \le \mathcal M,
\]
see e.g. \cite[Theorem III.7.1]{LSU}. Hence,  now we have $\max_{[0, T] \times \T} |F(x,m)| \le c_F \mathcal M^\alpha$. Then, reasoning as in Step 
2 in any interval 
$[T-(n+2), T-(n+1)]$, where we use that $u(t)-u(T-n)$ is bounded
uniformly  (see \rife{uTn}),  we can apply   \cite[Theorem V.3.1]{LSU} in 
order to get a  bound for  $Du$ at time  $T-(n+2)$. Since this bound  is independent  of $T$, and   thanks  to \rife{gut2}, we conclude that  a uniform  bound holds up to $t=T$:
\[
\max_{[0, T] \times \T} |D u| \le \mathcal U\,.
\]
Hence, \eqref{MUT} is proved.
\qed

\bigskip

A similar result also holds for the stationary ergodic problem, as well.

\begin{thm} Assume that $H$ satisfies \rife{globHass} and $F$ satisfies \rife{fass}. Then there exists  a  solution $(\bar\lambda, \bar m, \bar u)$ of the ergodic problem \rife{MFGergo} such that $\bar m\in \elle\infty$, $\bar u \in W^{1,\infty}(\T)$. 

Moreover,  there exist $\bar \M, \bar \U$ (only depending on $\kappa, \alpha$ and the constants $c_0, c_F$) such that any solution of \rife{MFGergo} satisfies
$$
\max_{\T} \bar m \le \bar \Mcal, \qquad \max_{\T} |D \bar u| \le \bar \Ucal\,.
$$
Finally, there exists $\gamma_0>0$ (only depending on $\kappa, \alpha$ and the constants $c_0, c_F$) such that, if  $F(x,s)+ \gamma_0 s$ is nondecreasing, then the solution 
$(\bar\lambda, \bar m, \bar u)$ is unique.
\end{thm}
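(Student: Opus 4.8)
The plan is to establish first the a priori bounds $\bar\M,\bar\U$ valid for \emph{every} solution of \rife{MFGergo}, then to deduce the existence of a solution by a compactness argument, and finally to obtain uniqueness as an immediate consequence of Theorem \ref{uniq-erg}.

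For the bounds, let $(\bar\lambda,\bar u,\bar m)$ solve \rife{MFGergo}. The key device is to regard the pair $U(x,t):=\bar u(x)+\bar\lambda(T-t)$, $M(x,t):=\bar m(x)$ as a classical solution of the time-dependent system \rife{mfgT} on $(0,T)$ (with the same $F$, since $-U_t=\bar\lambda$ exactly balances the drift produced by the affine time-shift), with terminal datum $\bar u$ and initial datum $\bar m$; then I would run the estimates of Steps 1--4 in the proof of Theorem \ref{thmquadH} on this particular solution. The point is that, away from $t=0$ and $t=T$, those estimates are entirely ``interior'': they use only that $\int_\T M(t)=1$ (hence $\|F(\cdot,M(t))\|_{L^p(\T)}\le c_F$ with $p=1/\alpha>d/2$) together with \rife{globHass}, and neither the regularity of the terminal datum nor $\|m_0\|_\infty$, which enter the proof of Theorem \ref{thmquadH} only through the behaviour near $t=T$ and near $t=0$. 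Since ${\rm osc}_\T U(\cdot,t)={\rm osc}_\T\bar u$ at every $t$, a \emph{single} use of the oscillation-decay inequality \rife{osc0} already yields ${\rm osc}_\T\bar u\le2C_0$, hence, by $\int_\T\bar u=0$, an $L^\infty$ bound on $\bar u$; the interior $C^\beta$-estimate of Step 2 then gives $\|\bar u\|_{C^\beta(\T)}\le C$ and — since $U(\cdot,t)-\max_\T U(\cdot,T-n)$ equals $(\bar u-\max_\T\bar u)+\bar\lambda(T-n-t)$ and its sup-norm is controlled on intervals of unit length — a bound $|\bar\lambda|\le C$; the Gagliardo--Nirenberg bootstrap of Step 3 gives $\|D\bar u\|_{L^{2/\alpha}(\T)}\le C$, with $2/\alpha>d$; and finally the linear equation $-\kappa\Delta\bar m-\dive(\bar m\,H_p(x,D\bar u))=0$, whose drift lies in $L^q(\T)$ for some $q>d$ and whose solution has unit mass, gives $\|\bar m\|_\infty\le\bar\M$ by De Giorgi--Nash--Moser; then $\|F(\cdot,\bar m)\|_\infty\le c_F\bar\M^{\,\alpha}$, and the local gradient bound for quasilinear elliptic equations gives $\|D\bar u\|_\infty\le\bar\U$. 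All constants depend only on $\kappa,\alpha,c_0,c_F$.

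With these uniform bounds in hand, the existence of a solution follows by a standard procedure — approximation of $F,H$ by regular data for which existence is known and passage to the limit, or the long-time limit of the finite-horizon system, or the vanishing-discount limit $\delta\to0$ (noting that ${\rm osc}_\T(\delta u_\delta)=\delta\,{\rm osc}_\T u_\delta\to0$, so $\delta u_\delta$ tends to the constant $\bar\lambda$) — always relying on the bounds above, uniform along the approximating family, together with $C^{1,\gamma}$- and $C^\gamma$-compactness; one gets $(\bar\lambda,\bar u,\bar m)$ solving \rife{MFGergo} with $\bar m\in\elle\infty$, $\bar u\in W^{1,\infty}(\T)$. Since every solution satisfies $\|\bar m\|_\infty\le\bar\M$ and $\|D\bar u\|_\infty\le\bar\U$, Theorem \ref{uniq-erg} then applies with $\bar\M,\bar\U$ as the global bounds and yields $\gamma_0>0$, depending through $\bar\M,\bar\U$ only on $\kappa,\alpha,c_0,c_F$, such that uniqueness holds as soon as $F(x,s)+\gamma_0 s$ is nondecreasing on $[0,\bar\M]$.

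The delicate part is the a priori estimate. One has to check carefully that Steps 1--4 of Theorem \ref{thmquadH} really go through for the frozen solution $(U,M)$ using only interior information — in particular that the translation/duality argument behind \rife{osc0} never appeals to a property of the terminal datum — and that the parabolic regularity inputs of Steps 2 and 4 can be replaced by their elliptic counterparts. Working directly on the stationary system instead would force one to reproduce the oscillation bound of Step 1 without a time variable along which to interpolate the spatial translation; the reduction above is designed precisely to sidestep that.
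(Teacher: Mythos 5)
Your approach diverges from the paper's: the paper treats the stationary system directly, whereas you embed it into the parabolic system and recycle the interior estimates of Theorem~\ref{thmquadH}. Concretely, the paper first bounds $\bar\lambda$ from above (evaluating the HJB equation at a maximum point of $\bar u$) and from below by combining a stationary integral inequality from \cite[Proposition 2.3]{CCPDE}, namely $\int_\T\bar m^{\alpha+1}\le C(\int_\T|H_p(x,D\bar u)|^2\bar m+1)^\sigma$ with $\sigma<1$, with the duality identity $\bar\lambda=\int_\T[H_p\cdot D\bar u-H]\bar m+\int_\T F(x,\bar m)\bar m$. It then exploits $\|F(\cdot,\bar m)\|_{L^{1/\alpha}}\le c_F$ and \emph{elliptic} maximal regularity (\cite[Theorem~1.1]{CG}) to put $H_p(x,D\bar u)$ in $L^{2p}$ with $p>d/2$, and closes with De~Giorgi--Nash--Moser for $\bar m$ and an elliptic gradient bound for $\bar u$. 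Existence is obtained concretely by truncating $F$ at level $\bar\Mcal$, invoking the known existence result of \cite{CCPDE} for bounded couplings, and observing that the a~priori bound $m\le\bar\Mcal$ removes the truncation.

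Your ``frozen solution'' device, $U=\bar u+\bar\lambda(T-t)$, $M=\bar m$, is a legitimate and rather clever shortcut: the oscillation-decay inequality \rife{osc0} applied once gives ${\rm osc}_\T\bar u\le 2C_0$ immediately, and the uniform $L^\infty$ bound on $U(\cdot,t)-\max_\T U(\cdot,T-n)=(\bar u-\max_\T\bar u)+\bar\lambda(T-n-t)$ over unit time intervals does pin down $|\bar\lambda|$. You also correctly identify that the interior portions of Steps~1--4 of Theorem~\ref{thmquadH} use only $\int_\T M(t)=1$ and \rife{globHass}, so the dependence on $\|m_0\|_\infty$ and on the $C^2$-regularity of $u_T$ indeed enters only through the boundary layers at $t\in\{0,T\}$, which are irrelevant for the time-independent solution. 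What your route buys is that one need not re-derive elliptic analogues of the parabolic machinery; what it costs is the careful verification (which you flag as the delicate part) that those parabolic estimates are genuinely interior and $T$-uniform. The paper's route is more self-contained and avoids that bookkeeping at the price of introducing the additional stationary tools from \cite{CCPDE} and \cite{CG}.

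One caveat: your existence argument is left at the level of a menu of standard procedures, without committing to any one of them or checking that the required input (e.g., existence for the approximating family) is actually available within the setup of the paper. The paper's truncation argument is sharper here: replace $F$ by $\overline F(x,m)=F(x,\min\{m,\bar\Mcal\})$, obtain a solution from \cite{CCPDE}, and observe that the a~priori bound $\bar m\le\bar\Mcal$ shows $\overline F(x,\bar m)=F(x,\bar m)$. You should adopt something equally concrete to close the existence step.
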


\proof We first prove the second assertion, namely the a priori estimate. 
Let $(\bar\lambda, \bar m, \bar u)$ be any solution of \rife{MFGergo}. 
We start with bounds on $\bar \lambda$ (that are somehow related to oscillation estimates in the previous part). First, $\bar \lambda \le -\max_\T 
H(\cdot, 0)$ (it just suffices to evaluate the equation for $\bar u$ at a 
maximum point of $\bar u$). Then, we use an estimate  in \cite[Proposition 2.3]{CCPDE}: since $\alpha < \frac 2d$, there exists $C > 0$ and $\sigma < 1$ (depending on $d$) such that
\[
\int_{\T} \bar m^{\alpha+1} \le C \left(\int_{\T} |H_p(x, D \bar u)|^2 \bar m + 1 \right)^{\sigma}
\] 
Hence, testing the equation  of $\bar m$ by $\bar u$ and the equation of  
$\bar u$ by $\bar m$ and integrating by parts we obtain
\begin{multline*}
\bar \lambda = \int_{\T} [H_p(x, D\bar u) D\bar u - H(x, D\bar u)]\bar m  + \int_{\T} F(x,\bar m) \bar m \label{dualo}  \\ 
\ge c_0^{-1}\int_{\T} |H_p(x, D \bar u)|^2 \bar m - c_F C \left(\int_{\T} 
|H_p(x, D \bar u)|^2 \bar m + 1 \right)^{\sigma},
\end{multline*}
which is clearly bounded from below by a positive constant depending on $c_F, c_0, C, \sigma$. Therefore, $\bar \lambda$ is bounded only in terms of $\kappa, c_0, c_F, \alpha$.

Moreover, since $\int \bar m = 1$, by \eqref{fass}
\[
\| F(\cdot, \bar m(\cdot)) \|_{L^p(\T)} \le c_F, \qquad p = \frac1\alpha > \frac d 2.
\]
Thus, we obtain bounds on $|- \Delta \bar u + H(x, D\bar u)|$ in $L^p(\T)$. Note that a straightforward control of the $L^2$-norm of $D \bar u$ comes from integration on $\T$ of the HJB equation for $\bar u$. Therefore, 
maximal regularity \cite[Theorem 1.1]{CG} results yield bounds on $H(x, D\bar u)$ in $L^p(\T)$, and then on $H_p(x, D\bar u)$ in $L^{2p}(\T)$, $p > d/2$. The existence of  $\bar \Mcal$ such that $\max_{\T} \bar m \le \bar \Mcal$ is then classical (e.g. \cite[Section 3.13]{LadyElliptic}). Being the right-hand side of the HJB equation bounded in sup-norm by $c_F \bar \Mcal^\alpha$, it follows  (e.g. \cite[Section 4.3]{LadyElliptic}) that  $\max_{\T} |D \bar u| \le \bar \Ucal$ for some $\bar \U$ only depending on $\bar \M$ and $\bar \lambda$. This concludes the a priori estimate. 

Note that the above procedure holds also if one replaces $F$ by its truncation $ = F(x, \min\{m, \Mcal\})$, since it depends only on the upper bound \eqref{fass} on $|F|$. Therefore, one may consider a classical solution of the system
\be\label{fixpoi}
\begin{cases}
\bar  \lambda -\Delta u  + H(x,Du)=\overline{F}(x,m) & \qquad {\rm in }\; \T  \\
  -\Delta  m  -{\rm div} \left( m\,  H_p(x,Du)\right)=0 & \qquad {\rm in }\; \T 
\\
  { \into m=1 \; , \; \into u=0 } & 
\end{cases}
\ee
which exists, e.g, by results in \cite{CCPDE} ($\overline{F}$ is globally 
bounded). Since $m \le \Mcal$, then $\overline F (x,m)=F (x,m)$, and we 
have the existence statement of a solution to the original ergodic problem.

In view of  the  a priori bounds found above, the uniqueness statement is 
a direct consequence of Theorem \ref{uniq-erg}. 
\qed 


\begin{rem}\label{remgrow} {\it On the growth assumption $\alpha < \frac 2d$. } We have seen in this section that a condition of mild growth of $F(x, \cdot)$ guarantees the existence of solutions to the MFG systems (both the ergodic and the evolutive one). It is worth noting that the existence of a triple $(\bar u, \bar m, \bar \lambda)$ to the ergodic MFG system 
has been established under the weaker growth assumption $\alpha < \frac 2{d-2}$ and additional {\it smallness} constraints on $c_F$, while non-existence of solutions might even arise in the regime $\alpha > \frac 2{d-2}$, see \cite{CCPDE}. Concerning the evolutive MFG system \eqref{mfgT}, existence of solutions for arbitrarily large time horizon $T$ may  fail already when $\alpha > \frac 2d$ in general \cite{CGhi} (so the study of the 
long time behavior becomes much more delicate, being the MFG system even more sensitive to the data). Still, smallness assumptions on $c_F$ are sufficient to recover existence for all $T > 0$, as described in \cite{CGhi}. These assumptions may   then guarantee uniqueness also, and the turnpike property, but such an analysis is a bit beyond the scopes of this work.

\end{rem}

\section{The exponential turnpike estimate}

In this section we prove that, if the anti-monotonicity of the coupling $F(x,m)$ is sufficiently small, then we can prove the existence of solutions of
\rife{mfgT} satisfying the turnpike property.
The strategy we adopt   follows \cite[Section 1.3.6]{CP-CIME}, through the construction of a solution via a fixed point in a suitable weighted space.


\begin{thm}\label{longtime}  Let $m_0\in \PT$. Assume that $F(x,m), G(x,m)$  satisfy
\rife{fK}, \rife{gK},  and  that $ H(x,p)$  satisfies \rife{Hk} and \rife{lip}.

Then there exists $\gamma>0$  only depending on $L, \kappa$ (and on the functions $F,H$), such that  if  $F(x,s) + \gamma s$ is nondecreasing then 
any solution  $(u^T,m^T)$ of problem \rife{mfgT}  satisfies
\be\label{turnp2}
\| m^T(t)-\bar m\|_\infty + \| Du^T(t)-D\bar u\|_\infty \leq M (e^{-\omega t}+ e^{-\omega (T-t)}) \qquad \forall t\in (1,T-1)\,,
\ee
for some $\omega, M>0$ (independent of $T$), where $(\bar \lambda, \bar u, \bar m)$ is given by Proposition \ref{erg}.
\end{thm}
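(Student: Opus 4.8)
The plan is to follow \cite[Section 1.3.6]{CP-CIME}: reduce to a perturbation of the stationary state, extract the crucial dissipation from the duality between the Hamilton--Jacobi and Fokker--Planck equations, and then produce the exponentially decaying corrector by a contraction mapping in a space weighted by $\sigma(t):=e^{-\omega t}+e^{-\omega(T-t)}$; uniqueness (Corollary~\ref{corlip}) will finally turn the estimate for the solution so constructed into an estimate for \emph{any} solution. As a first step one records the uniform a priori bounds: since $H$ satisfies \rife{lip}, the arguments in the proof of Corollary~\ref{corlip} (adapted to the present $G$) provide $\M,\U>0$, \emph{independent of $T$}, such that every solution of \rife{mfgT} obeys \rife{MUT}; together with Proposition~\ref{erg} this fixes, once and for all, the constants $\alpha:=\alpha_{\U'}$, $\beta:=\beta_{\U'}$, $L:=L_{\U'}$ from \rife{Hk}--\rife{lip} with $\U':=\max(\U,\|D\bar u\|_\infty)$, and gives $\delta_0:=\min_{\T}\bar m>0$ by the strong maximum principle.

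\emph{Dissipation estimate.} Set $w:=u^T-\bar u-\bar\lambda(T-t)$ and $\rho:=m^T-\bar m$, so that $Dw=Du^T-D\bar u$ and both $(u^T,m^T)$ and $(\bar u+\bar\lambda(T-t),\bar m)$ solve the HJB--FP pair in \rife{mfgT} (only the data differ: $\rho(0)=m_0-\bar m$, $w(T)=G(\cdot,m^T(T))-\bar u$). Running the duality computation of Theorem~\ref{uniq} for these two pairs, using $\alpha$-convexity of $H$ and that $F(x,\cdot)+\gamma\,\cdot$ is nondecreasing, gives $-\frac{d}{dt}\into w\rho \ge \alpha\into(m^T+\bar m)|Dw|^2-\gamma\into\rho^2$. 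Since $\into\rho=0$, Poincaré's inequality yields $|\into w\rho|=|\into(w-\into w)\rho|\le C_P\delta_0^{-1/2}(\into(m^T+\bar m)|Dw|^2)^{1/2}\|\rho\|_{\elle2}$; moreover $\rho$ solves the Fokker--Planck equation with drift $H_p(x,Du^T)$ (bounded by $L$) and source $\diverg(\bar m[H_p(x,Du^T)-H_p(x,D\bar u)])$, whose $\elle2$-norm is $\le\beta\sqrt{\M}\,(\into\bar m|Dw|^2)^{1/2}$, so Lemma~\ref{stimaRhoF} and \rife{76} give, after integration on $(0,T)$,
\[
\int_0^T\!\!\into\rho^2\ \le\ C_1\|m_0-\bar m\|^2_{\elle2}+C_2\,\M\,\beta^2\int_0^T\!\!\into(m^T+\bar m)|Dw|^2,
\]
with $C_1,C_2$ depending only on $\kappa,L$. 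Integrating the differential inequality on $(0,T)$, bounding $|\into w(0)\rho(0)|$ and $|\into w(T)\rho(T)|$ uniformly in $T$ via Step 1 (both are $\le C_P\|Dw\|_{\elle2}\|\rho\|_{\elle2}$ with $\|Dw\|_\infty\le\U+\|D\bar u\|_\infty$), and absorbing $\gamma\int\!\int\rho^2$ using the last display, one gets: there is $\gamma_0>0$ depending only on $\kappa,L$ and on $F,H$ (through $\alpha,\beta,\M$) such that if $\gamma<\gamma_0$ then $\int_0^T\!\into(m^T+\bar m)|Dw|^2+\int_0^T\!\into\rho^2\le C$ uniformly in $T$. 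It is exactly here that the smallness of $\gamma$ is consumed.

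\emph{Weighted fixed point and conclusion.} To upgrade this $T$-uniform integrated bound to the pointwise decay \rife{turnp2}, I would run the fixed-point scheme of \cite[Section 1.3.6]{CP-CIME} in the space $Z_T$ of pairs $(w,\rho)\in C([0,T];W^{1,\infty}(\T))\times C([0,T];\elle\infty)$ with norm $\|(w,\rho)\|_{Z_T}:=\sup_{[0,T]}\sigma(t)^{-1}(\|Dw(t)\|_\infty+\|\rho(t)\|_\infty)$. On a ball of $Z_T$ one defines the map $(w,\rho)\mapsto(w',\rho')$, where $\rho'$ solves the \emph{forward} Fokker--Planck equation linearized at $(\bar u,\bar m)$ with datum $m_0-\bar m$ and source $\diverg((\bar m+\rho)[H_p(x,D\bar u+Dw)-H_p(x,D\bar u)])$, and $w'$ solves the \emph{backward} Hamilton--Jacobi equation linearized at $\bar u$ with datum $G(\cdot,m^T(T))-\bar u$ and right-hand side $F(\cdot,m^T)-F(\cdot,\bar m)$ plus the quadratic Hamiltonian remainder. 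The analytic ingredients are: (i) the heat semigroup on $\T$ perturbed by the bounded drift $H_p(x,D\bar u)$ relaxes, on zero-average data, exponentially with a rate $\omega_*>0$ depending only on $\kappa$ and $D\bar u$; (ii) for $\diverg$-type sources this produces a kernel $k(\tau)\le C\tau^{-1/2}e^{-\omega_*\tau}$ with $\int_0^t k(t-s)\sigma(s)\,ds\le C_k\sigma(t)$ and $\int_t^T k(s-t)\sigma(s)\,ds\le C_k\sigma(t)$ whenever $\omega<\omega_*$, with $C_k$ independent of $T$. Fed with the dissipation estimate above (which controls the sizes of the frozen sources and keeps the contraction constant $<1$ once $\gamma$ is small), these bounds make the map a self-map of the ball and a contraction; its fixed point is a solution of \rife{mfgT} with $\|Du^T(t)-D\bar u\|_\infty+\|m^T(t)-\bar m\|_\infty\le M\sigma(t)$ on $[0,T]$. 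By Step 1 and Corollary~\ref{corlip} this is the \emph{unique} solution, so \emph{every} solution satisfies \rife{turnp2}; the restriction to $(1,T-1)$ accommodates the quasilinear parabolic regularity step needed to pass from the norms natural to the fixed point to the $L^\infty$-gradient bound.

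\emph{Main obstacle.} The delicate point is the last step: making the rate $\omega$ and all constants genuinely independent of $T$. One must handle simultaneously the forward relaxation from $m_0$ and the backward relaxation from the terminal layer at $t=T$, check that convolution against $k$ does not spoil the weight $\sigma$ (which forces $\omega$ strictly below $\omega_*$), and verify that the contraction constant stays $<1$ — precisely where the dissipation estimate, hence the smallness of $\gamma$, enters. A secondary nuisance is the bookkeeping of the fixed terminal/initial data inside the weighted norm, and the fact that $\omega_*$ (hence $C_k$, hence the admissible $\gamma_0$) degenerates when the stationary configuration approaches a bifurcation threshold, consistently with Remark~\ref{remkappa}.
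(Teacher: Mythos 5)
Your general blueprint (perturb around the stationary state, extract dissipation from the Lasry--Lions duality, then a fixed point in a weighted space) is in the same spirit as the paper's, and your ``dissipation estimate'' does reproduce, essentially, the first part of the paper's Lemma~\ref{apriori}: a $T$-uniform bound on $\int_0^T\!\into(m^T+\bar m)|Dw|^2+\int_0^T\!\into\rho^2$. But there are two genuine gaps beyond this point.

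First, you propose a \emph{Banach contraction} in the weighted space $Z_T$, iterating the pair $(w,\rho)$ through a decoupled linearized HJB/FP map, and you claim the dissipation estimate ``keeps the contraction constant $<1$ once $\gamma$ is small.'' That claim is unjustified: the Lipschitz constant of your map is governed by the sizes of $\beta:=\beta_\U$ (Lipschitz constant of $H_p$), $\bar\M$, and the Lipschitz constant $\ell_\M$ of $F$ --- none of which is small, and none of which is affected by the anti-monotonicity parameter $\gamma$. The global integral bound does not translate into pointwise smallness of the frozen sources at times near $0$ or $T$, where $\sigma(t)\approx 1$, so there is no reason the decoupled iteration should contract. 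This is exactly why the paper does \emph{not} use Banach's theorem: it uses Schaefer's theorem, whose only demands are compactness, continuity, and an a priori bound on the solutions of the $\sigma$-parameterized family $\mu=\sigma\Phi(\mu)$; and the map $\Phi$ is \emph{not} decoupled --- given $\mu$, it solves the coupled forward-backward system \eqref{sys} for $(v,\rho)$, so the duality structure is preserved at every stage.

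Second, and more fundamentally, you are missing the step that turns the integrated dissipation into the exponential decay \eqref{turnp2}. This is the technical heart of Lemma~\ref{apriori}: from the global bound $\sigma\int_0^T\!\into|Dv|^2+\int_0^T\!\into\mu^2\le c M^2$ one extracts, for $\tau$ large, times $\xi_\tau\in[0,\tau/2]$ and $\eta_\tau\in[T-\tau/2,T]$ at which $\|\mu\|_2^2\lesssim M^2/\tau$; re-running the duality on $(\xi_\tau,\eta_\tau)$ and combining with Lemma~\ref{stimaRhoF} and Lemma~\ref{lem72} gives $\|\mu(t)\|_2+\|\tilde v(t)\|_2\le\frac12(\|\mu_0\|_2+\|\tilde v_T\|_2)$ on $[\tau,T-\tau]$ for a fixed $\tau$, and \emph{iterating} this halving yields the exponential rate $\omega$. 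No semigroup kernel estimate or contraction is needed for this; it is a direct bootstrap from the integral bound. Your proposal jumps from the integral bound to the contraction map without an argument to supply the exponential rate, so the a priori bound required even for Schaefer's theorem (let alone for a self-map of a small ball) is not actually established. A secondary issue: your step ``Corollary~\ref{corlip} adapted to the present $G$ provides $\M,\U$ independent of $T$'' does not go through when $G$ depends on $m$ (Corollary~\ref{corlip} requires $G=u_T(x)$); this is precisely why the paper proves the Lipschitz bound $\|Dv(t)\|_\infty\le K$ only for $t\le T-1$ and applies Lemma~\ref{apriori} on $(0,T-1)$, handling general $m_0\in\PT$ in a separate Step~3 via parabolic smoothing and the uniqueness Theorem~\ref{uniq}.
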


\begin{rem}  According to the Theorem, there is a threshold of anti-monotonicity, namely there exists some  $\hat \gamma>0$ such that if $F(x,s)+ \gamma s$ is nondecreasing with $\gamma<\hat \gamma$, then any solution enjoys the turnpike estimate. Of course, we have $\hat \gamma \leq \gamma_0$ given by Proposition \ref{erg} (indeed, a unique stationary state is used here). This value $\hat \gamma$ depends on $F,H$ through the constant $L$ in \rife{lip} and through assumptions \rife{fK}, \rife{Hk}, in the sense that it depends on the constants $c_K, \ell_K, \alpha_K$ for a value  of $K$ only depending on $L,\kappa$. No special effort is devoted, in the proof below, to catch  a refined estimate of $\hat \gamma$; as it will be clear, several arguments will need $\gamma$ to be smaller than generic constants appearing in the global (in time) estimates of the solutions.
\end{rem}

The proof of Theorem \ref{longtime} will rely on the application of Schaefer's fixed point theorem (\cite[Thm 11.3]{GT}). The key-point is given in the following lemma which contains an a priori estimate on a sort of linearization of the mean field game system. We recall that, for any $v\in \elle2$, we denote $\lg v\rg= \into v $ and $\tilde v= v- \lg v\rg$.

\begin{lem}\label{apriori}
Let $h(x,p)$ be differentiable with respect to $p$, and assume that $h(x,p), h_p(x,p)$, $f(x, s)$ and $B(x,p)$ are all  Carath\'eodory  functions  which satisfy  the following growth conditions for some constants $\ell_0, C_0, C_1, C_2$ and  for every $s\in \R$,  $x\in \T$ and $p\in \R^d$ such that $|p|\leq K$:
 \be\label{hT}
 h(x,0)=0\,,\quad |h_p(x,p)| \leq \ell_0\,,
 \ee
 \be\label{fT}
 f(x,s)s\geq -\gamma \,s^2\,,\quad |f(x,s)| \leq C_0\,,\quad |f(x,s)|\leq C_1 \, |s|
 \ee
\be\label{BT}
B(x,p)\cdot p \geq C_2^{-1} |p|^2\,, \qquad |B(x,p)|\leq C_2\, |p| \,.
 \ee 
For $\sigma\in [0,1]$,  $\mu_0\in \elle2$,  with $\into \mu_0=0$, and $v_T\in \elle2$,  let $(\mu,v)$ be a solution of  the system
 \be\label{ls}
 \begin{cases}
-\partial_t v - \Delta v + h(x, Dv)  = f(x,  \mu)   &  t\in (0,T),
\\
v(T)= v_T& 
\\
\partial_t \mu  - \Delta \mu- \dive(\mu\, h_p(x, Dv)) = \sigma\, \dive(B(x,Dv)) &  \,t\in (0,T),
\\
\mu(0)= \sigma \, \mu_0
& \end{cases}
\ee
where we  assume that, for any $(t,x)\in Q_T$,  we have $|Dv(t,x)|\leq K$ and 
\be\label{summa}
\begin{split}
\sigma B(x,p)\cdot p -   \mu(t,x) (h(x,p)-h_p(x,p)\cdot p)      \geq \sigma\,c_0\,  |p|^2\qquad \forall (t,x)\in Q_T, \forall p\,:|p|\leq K\,, 
\end{split}
\ee
for some $c_0, K>0$. 
 
 Then there exist constants $\gamma_0, \omega, c >0$ (independent of $\sigma, v,\mu$) such that, if $\gamma\leq \gamma_0$ ($\gamma$ is in \rife{fT}), 
 then  $(\mu,v)$   satisfies
\be\label{stimal2}
\|\mu(t)\|_2 + \|\tilde v(t)\|_2 \leq  c\, [\|\mu_0\|_2+\|\tilde  v_T\|_2] \left( e^{-\omega t}+ e^{-\omega (T-t)}\right) \qquad \forall t\in (0,T)\,,
\ee
 where $\tilde v(t)= v(t)- \lg v(t) \rg$.  The constants $\gamma_0, \omega, c$ only depend on  $\kappa, \ell_0, C_1, C_2, c_0$. 
\end{lem}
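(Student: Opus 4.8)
\emph{The plan is to combine the Lasry--Lions duality identity for the linear system \rife{ls} with exponential--in--time decay estimates for its two decoupled equations, and then to close the argument by a Gr\"onwall--type bootstrap in a norm weighted by $e^{-\omega t}+e^{-\omega(T-t)}$; the smallness of $\gamma$ will be used only to absorb the sign--indefinite anti--monotone coupling.}

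\textbf{Step 1: the duality identity.} I would test the $v$--equation against $\mu$ and the $\mu$--equation against $v$, integrate over $\T$ and subtract: the second order terms cancel, and since $\into\mu=0$ one has $\into v\mu=\into\tilde v\mu$, so that
\[
\frac{d}{dt}\into\tilde v\,\mu \;=\; -\into\Big[\sigma\,B(x,Dv)\cdot Dv+\mu\,\big(h_p(x,Dv)\cdot Dv-h(x,Dv)\big)\Big]\;-\;\into f(x,\mu)\,\mu\,.
\]
Assumption \rife{summa} bounds the bracket below by $\sigma c_0|Dv|^2$, while \rife{fT} gives $-\into f(x,\mu)\mu\le\gamma\|\mu(t)\|_2^2$; hence $\frac{d}{dt}\into\tilde v\mu\le-\sigma c_0\|Dv(t)\|_2^2+\gamma\|\mu(t)\|_2^2$. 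Integrating on $(0,T)$ and using $\mu(0)=\sigma\mu_0$, $v(T)=v_T$ yields the first global bound
\[
\sigma c_0\int_0^T\|Dv\|_2^2\,dt\;\le\;\sigma\,\|\tilde v(0)\|_2\|\mu_0\|_2+\|\tilde v_T\|_2\|\mu(T)\|_2+\gamma\int_0^T\|\mu\|_2^2\,dt\,.
\]

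\textbf{Step 2: decay of the decoupled equations and global $L^2$ bounds.} Since the drift $h_p(x,Dv(t,\cdot))$ is bounded by $\ell_0$ (by \rife{hT}) and $\mu$ has zero mean, the forward Fokker--Planck evolution enjoys a spectral gap uniform in $T$ (via a Doeblin/Harnack minorization, with rate $\omega_0$ depending only on $\ell_0,\kappa,d$); by Duhamel and $|B(x,p)|\le C_2|p|$,
\[
\|\mu(t)\|_2\;\le\;C\sigma e^{-\omega_0 t}\|\mu_0\|_2+C\sigma\int_0^t(t-s)^{-1/2}e^{-\omega_0(t-s)}\|Dv(s)\|_2\,ds\,,
\]
together with a companion estimate for $\|Dv\|_2$ with a slightly smaller exponent, obtained by parabolic smoothing. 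Applying the same argument to the time--reversed $v$--equation, in which $\tilde v$ solves a backward linear equation with bounded drift and source controlled (via $|f(x,s)|\le C_1|s|$) by $C_1\|\mu\|_2$, gives
\[
\|\tilde v(t)\|_2\;\le\;C e^{-\omega_0(T-t)}\|\tilde v_T\|_2+C\,C_1\int_t^T e^{-\omega_0(s-t)}\|\mu(s)\|_2\,ds\,.
\]
Squaring and integrating the first inequality yields $\int_0^T\|\mu\|_2^2\le C\sigma^2\|\mu_0\|_2^2+C\sigma^2\int_0^T\|Dv\|_2^2$; inserting this into Step 1 and using $\gamma\le\gamma_0$ small to absorb the last term (recall $\sigma\le1$) gives the global bound $\int_0^T(\|Dv\|_2^2+\|\mu\|_2^2)\le C(\|\mu_0\|_2^2+\|\tilde v_T\|_2^2)$, with $C$ independent of $T$ and of $\sigma\in[0,1]$; the boundary quantities $\|\tilde v(0)\|_2$ and $\|\mu(T)\|_2$ are reabsorbed through Young's inequality together with the pointwise estimates of Step 3.

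\textbf{Step 3: weighted bootstrap and the main obstacle.} Fix $0<\omega<\omega_0/2$, set $\phi_T(t):=e^{-\omega t}+e^{-\omega(T-t)}$ and $N:=\sup_{[0,T]}\phi_T^{-1}\big(\|\mu\|_2+\|\tilde v\|_2+\|Dv\|_2\big)$. Substituting the convolution inequalities of Step 2 into one another, I would split each time--convolution into a ``far'' part — dominated by the exponential decay of the kernel against the global $L^2$ bounds of Step 2, hence $\le C(\|\mu_0\|_2+\|\tilde v_T\|_2)\phi_T(t)$ — and a ``near--diagonal'' part, bounded by $\phi_T(t)$ times a factor made $<\tfrac12$ by the choice of $\omega$ and $\gamma$, or else reabsorbed into $N$; this gives $N\le C(\|\mu_0\|_2+\|\tilde v_T\|_2)+\tfrac12 N$, which is \rife{stimal2}, with $\gamma_0,\omega,c$ depending only on $\kappa,\ell_0,C_1,C_2,c_0$. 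The main obstacle is that this last loop is \emph{not} perturbative: $\ell_0,C_1,C_2$ are order one, so a plain Gr\"onwall iteration $\mu\rightarrow v\rightarrow\mu$ does not close; what rescues it is the genuine forward--backward (saddle) structure — $\mu$ relaxes to $0$ forward in time and $\tilde v$ backward in time — made quantitative by the dissipation $\sigma c_0\int\|Dv\|_2^2$ produced by Step 1, the smallness of $\gamma$ entering only to tame $\into f(x,\mu)\mu$. Tracking the interaction of the two opposite exponential weights in the convolutions so that all constants remain independent of $T$ and of $\sigma$ is the delicate point.
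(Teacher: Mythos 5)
Your Steps 1 and 2 track the paper closely: the same Lasry--Lions duality identity, the same exponential decay estimates from the decoupled Fokker--Planck and Hamilton--Jacobi equations (the paper's Lemmas A.1, A.2), and the same global bound $\int_0^T(\|\mu\|_2^2+\sigma\|Dv\|_2^2)\le c\,M^2$ with $M=\|\mu_0\|_2+\|\tilde v_T\|_2$. The gap is in Step 3, and you have in fact put your finger on it without resolving it. Your proposed weighted bootstrap $N:=\sup\phi_T^{-1}(\|\mu\|_2+\|\tilde v\|_2+\|Dv\|_2)$ leads to an inequality of the form $N\le C_*N+C\,M$, where $C_*$ is an order-one product of $\ell_0,C_1,C_2$ and $1/(\omega_0-\omega)$; there is no parameter at your disposal that makes $C_*<1$. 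The smallness of $\gamma$ only tames the $\int f(\mu)\mu$ term in duality; it does nothing to the $\mu\to v\to\mu$ Duhamel loop, which carries the full constants. And the ``forward--backward saddle structure'' you invoke is not something the convolution iteration sees: the dissipation $\sigma c_0\int\|Dv\|_2^2$ is an integrated quantity and does not directly shrink the sup-in-time norm. Your near/far splitting also has a concrete flaw: the far part of, say, $\int_0^{t-\tau_*}e^{-\omega_0(t-s)}\|Dv(s)\|_2\,ds$ is bounded, via Cauchy--Schwarz against the global $L^2_t$ bound, by something of size $e^{-\omega_0\tau_*/2}M$, which is a constant in $t$ and is \emph{not} dominated by $M\phi_T(t)$ when $t$ sits in the middle of $(0,T)$ and $T$ is large. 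So the inequality $N\le\tfrac12 N+CM$ is never actually reached.

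The missing idea, which is how the paper closes the argument, is to replace the weighted Gr\"onwall loop by a ``decay by a fixed factor over a long-but-fixed interval'' statement. From $\int_0^T\|\mu\|_2^2\le c\,M^2$ one picks sample times $\xi_\tau\in[0,\tau/2]$ and $\eta_\tau\in[T-\tau/2,T]$ with $\|\mu(\xi_\tau)\|_2^2,\|\mu(\eta_\tau)\|_2^2\le 2c\,M^2/\tau$; rerunning the duality estimate on $(\xi_\tau,\eta_\tau)$ with these small boundary values, and then propagating through Lemma \ref{stimaRhoF} and Lemma \ref{lem72}, gives $\|\mu(t)\|_2+\|\tilde v(t)\|_2\le (c/\sqrt\tau+c/\tau^{1/4}+ce^{-\nu\tau/2})M$ for all $t\in[\tau,T-\tau]$. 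Choosing $\tau$ large (depending only on the structural constants) makes the factor $\le\tfrac12$, and iterating this contraction on nested intervals $[k\tau,T-k\tau]$ produces the exponential rate \rife{stimal2}. Here the smallness that closes the argument comes from $1/\sqrt\tau\to0$, i.e.\ from averaging in time, not from smallness of $\gamma$, $\omega$, or any structural constant -- precisely the degree of freedom that your weighted bootstrap lacks.
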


\begin{proof} 
For $T>0$, $\sigma\in [0,1]$, $\mu_0\in \elle2$ with $\into \mu_0=0$,  and $v_T\in L^2(\T)$,  let  $(\mu,v)$ be  the  solution of system \rife{ls}.   
We first prove that there exists a constant $c$, independent of $\sigma,T,\mu_0, v_T$, such that 
\be\label{apri}
\|\mu(t)\|_2+ \|\tilde v(t)\|_2 \leq c (\|\mu_0\|_2+ \|\tilde v_T\|_{2}) \,.
\ee
To start with, we observe that,  due to \rife{summa} and \rife{fT}, $(\mu,v)$ satisfies
\be\label{ddtT}
\begin{split}
-\frac{d}{dt} \into \mu(t)v(t) & = \into f(x,  \mu) \mu + \sigma \into B(x,Dv)Dv 
 -   \into \mu (h(x,Dv)-h_p(x,Dv)\cdot Dv) 
\\
& \geq \sigma \, c_0\into |Dv|^2 - \gamma  \into | \mu|^2\,.
\end{split}
\ee
Integrating and  using Lemma \ref{stimaRhoF} we get, for a constant $C$ only depending on $\kappa, \ell_0$ (given by \rife{hT}):
\begin{align*}
 \sigma \, c_0 \int_0^T \int_\T |Dv|^2  &  \le (v (0), \mu_0) - (v_T, \mu(T) ) + \gamma  \int_0^T \|\mu(t)\|_2^2 dt \\ 
 & \le (v (0), \mu_0) - (v_T, \mu(T) ) + C\gamma  
 \|\mu_0\|_2^2 +   C\, \gamma \sigma^2 \int_0^T \int_\T |B(x,Dv)|^2 
 \\  & \le (v (0), \mu_0) - (v_T, \mu(T) ) + C\gamma  
 \|\mu_0\|_2^2 +   C\, C_2^2\gamma \sigma \int_0^T \int_\T |Dv|^2
\end{align*}
where we used \rife{BT} and $\sigma\leq 1$.
If $\gamma C\, C_2^2< c_0$ we deduce the bound
\be\label{axel1} 
\sigma \int_0^T \int_\T   |  Dv |^2 \le c\,    \left\{\|\mu_0\|_2   \big[ \|\tilde v (0)\|_2 + \|\mu_0\|_2 \big]+   \| \tilde v_T\|_2 \|\mu(T)\|_2\right\}.
\ee
Hereafter, we denote by $c$ possibly different constants, depending on $\kappa, \ell_0, C_1, C_2, c_0$, which may vary from line to line. Those constants are independent of $\sigma, T, \mu_0, v_T$.

We deduce from \rife{axel1}, using again    Lemma \ref{stimaRhoF}  and \rife{BT},
\begin{align*}
\sup_{t\in [0,T]}\,  \, \|\mu (t)\|_2^2   &  \leq  C  \left( \|\mu_0\|_2^2 +\sigma^2 C_2^2 \int_0^T \int_\T   |  Dv  |^2 \right)  \\
& \leq     c\,    \left\{\|\mu_0\|_2   [ \|\tilde v (0)\|_2 + \|\mu_0\|_2 ]+   \| \tilde v_T\|_2 \|\mu(T)\|_2\right\}
\end{align*}
which implies
\be\label{m2}
\sup_{[0,T]} \|\mu(t)\|_2^2 \leq c\,  [\|\mu_0\|_2^2+ \| \tilde v_T\|_2^2] +c\, \|\mu_0\|_2 \|\tilde v(0)\|_2\,. 
\ee
Since the Hamilton-Jacobi equation implies (using Lemma \ref{lem72} and \rife{fT})
\begin{align*}
\|\tilde v(0)\|_2 &  \leq C \, e^{-\nu T}\| \tilde v_T\|_2 + C\, C_1 \int_0^Te^{-\nu s}\| \mu(s)\|_2 ds
 \leq  C\, \| \tilde v_T\|_2 + \frac {CC_1}\nu \, \sup_{[0,T]} \|\mu(t)\|_2 
\end{align*}
coming back to \rife{m2} we deduce (for possibly different $c$)
$$
\sup_{[0,T]} \|\mu(t)\|_2^2 \leq c\,  [\|\mu_0\|_2^2+ \|\tilde v_T\|_2^2]\,.
$$
A similar estimate follows  for $\sup_{[0,T]}\|\tilde v(t)\|_2$,  
using  again Lemma \ref{lem72}.  This allows us to conclude estimate   \rife{apri}. 
In addition, the inequalities above also show that 
\be\label{intdv}
\sigma \int_0^T \int_\T   |  Dv |^2 dt + \int_0^T \int_\T   |  \mu |^2 dt\leq c \,  [\|\mu_0\|_2^2+ \|\tilde v_T\|_2^2]\,.
\ee
We claim now that this implies the existence of $\tau$ such that, for every $T>2\tau$ we have 
\be\label{tnot}
\|\mu(t)\|_2+ \|\tilde v(t)\|_2 \leq \frac12 [\|\mu_0\|_2 + \|\tilde  v_T\|_2 ] \qquad \forall t\in [\tau,T-\tau]\,.
\ee
In fact, using \rife{intdv}, we know that there exist points $\xi_\tau \in [0,\tau/2] $ and $\eta_\tau\in [T- \tau/2, T]$ such that
\be\label{muxitau}
 \|\mu(\xi_\tau)\|_2^2   \leq \frac {2c}{\tau} M^2\,,\quad  \|\mu(\eta_\tau)\|_2^2   \leq \frac {2c}{\tau}M^2\,,\qquad \hbox{where $M^2=\|\mu_0\|_2^2 + \|\tilde v_T\|_2^2  $.}
 \ee 
Estimating once more $\mu$ through Lemma \ref{stimaRhoF}, and then using  \rife{ddtT} (integrated in the interval $(\xi_\tau, \eta_\tau)$), thanks to \rife{muxitau} we get 
\begin{align*}
\int_{\xi_\tau}^{\eta_\tau} \|\mu(t)\|_2^2 dt  &  \leq C   \|\mu(\xi_\tau)\|_2^2 + C\,C_2^2 \sigma^2 \int_{\xi_\tau}^{\eta_\tau} \into |Dv|^2 
\\
& \leq 
 \frac {2c\,C}{\tau} M^2 +  \frac{C\,C_2^2}{c_0} \left\{  \gamma \int_{\xi_\tau}^{\eta_\tau} \|\mu(t)\|_2^2 +    \into \mu(\xi_\tau)v(\xi_\tau)- \into \mu(\eta_\tau)v(\eta_\tau) \right\}
\end{align*}
Using the global bound for $\|\tilde v(t)\|_2$ and \rife{muxitau} we estimate last two terms. In the end, choosing $\gamma$ sufficiently small we deduce
\be\label{ml2}
\int_{\xi_\tau}^{\eta_\tau} \|\mu(t)\|_2^2 dt  \leq M^2 ( \frac c{\tau} + \frac c{\sqrt \tau})
\ee
and in turn what we estimate in between  gives (as we said before, for possibly different $c$)
\be\label{vl2}
\sigma^2 \int_{\xi_\tau}^{\eta_\tau} \into |Dv|^2 \leq M^2 ( \frac c{\tau} + \frac c{\sqrt \tau})\,.
\ee
Using once more Lemma  \ref{stimaRhoF} we have, for all $t\in (\xi_\tau, \eta_\tau)$
$$
\|\mu(t)\|_2^2  \leq C  \left( \|\mu(\xi_\tau)\|_2^2+ \sigma^2\, C_2^2  \int_{\xi_\tau}^{\eta_\tau} \into |Dv|^2\right)
$$
and so \rife{muxitau} and \rife{vl2} yield
\be\label{muxitau2}
\|\mu(t)\|_2^2   \leq   M^2 ( \frac c{\tau} + \frac c{\sqrt \tau})\qquad \forall t\in (\xi_\tau, \eta_\tau)\,.
\ee
Similarly we estimate $\tilde v$, using Lemma \ref{lem72} and \rife{fT}. For $t \in (\xi_\tau, \eta_\tau)$ we have
\be\label{vnot}
\begin{split}
\|\tilde v(t)\|_2  & \leq C\, e^{-\nu (\eta_\tau -t) } \|\tilde v(\eta_\tau)\|_2  + C\, C_1  \int_{t}^{\eta_\tau}  e^{-\nu(s-t)}\|\mu(s)\|_2 ds
\\
& \leq c\,  e^{-\nu (\eta_\tau -t) } M  +      M  ( \frac c{\sqrt \tau} + \frac c{\tau^{\frac14}})\,.
\end{split}
\ee
where we used \rife{apri} and \rife{muxitau2}.   For $t \in [\tau, T-\tau]$ we have $\eta_\tau -t \geq \frac \tau 2$, hence we get
$$
\|\tilde v(t)\|_2^2  \leq c\, e^{-\nu \tau/2 } M  +     M  ( \frac c{\sqrt \tau} + \frac c{\tau^{\frac14}})\,.
$$
This inequality, together with \rife{muxitau2}, imply \rife{tnot} for a convenient choice of $\tau$. 

Finally, by iteration of \rife{tnot}, we deduce that there exist  $\omega>0$ and $c>0$ such that
$$
\|\mu(t)\|_2 + \|\tilde v(t)\|_2 \leq  c\, [\|\mu_0\|_2+\|\tilde  v_T\|_2]  \left( e^{-\omega t}+ e^{-\omega (T-t)}\right) \qquad \forall t\in (0,T)\,,
$$
which is \rife{stimal2}.
\end{proof}

Now we complete the proof of Theorem \ref{longtime}.
\vskip0.4em

{\bf Proof of Theorem \ref{longtime}.}\quad 

{\it Step 1} (definition of the fixed  point mapping) We will first prove the result assuming  $m_0\in \PT \cap C^{0,\alpha}(\T)$. We  set  $X= C^0([0,T]; L^2_0(\T))$, where $L^2_0(\T)$ denotes the subspace of $\elle2$ made of functions with zero average. Then  we introduce the following norm in $X$:
 $$
||| u |||_X:= \sup_{[0,T]} \left(\frac{\|u(t)\|_{L^2(\T)} }{e^{-\omega t}+ e^{-\omega(T-t)}}\right)
 $$
where $\omega>0$ will be chosen later.  We have that $(X, ||| u |||_X)$ is a Banach space and $|||\cdot |||$ is equivalent to the standard norm $\|u\|=\sup_{[0,T]}  \|u(t)\|_{L^2(\T)}  $.
 
By Proposition \ref{erg}, there exists some $\gamma_0$ such that, if $F(x,s)+ \gamma_0 s$  is nondecreasing, then there exists a unique $(\bar \lambda, \bar m, \bar u)$ solution of the stationary ergodic problem \rife{MFGergo}.

We define the operator $\Phi$ on $X$ as follows: given $\mu\in X$, let $(v,\rho)$ be the solution to the system
\be\label{sys}
\begin{cases}
-v_t - \kappa \Delta v + H(x,D\bar u + Dv) - H(x,D\bar u)= F(x,\bar m + \mu) - F(x,\bar m) & 
\\
v(T)= G(x, \bar m + \mu(T))-\bar u & 
\\
\rho_t - \kappa \Delta \rho- \dive(\rho\, H_p(x,D\bar u + Dv)) = -\dive(\bar m \left[ H_p(x,D\bar u + Dv)- H_p(x, D\bar u)\right]) & 
\\
\rho(0)= m_0-\bar m\,.
& \end{cases}
\ee
Then we set $\rho= \Phi\mu$. We observe that the existence and uniqueness of $(v,\rho)$ is well known because $H$ satisfies \rife{lip}. We point out that   $\mu$  is a  fixed point of $\Phi$ if and only if $m:= \bar m + \mu$ and $u:= \bar u + \bar \lambda(T-t)+ v$ yield  a solution of \rife{mfgT}. 

{\it Step 2}. (a priori estimates and existence of  a  fixed point) We first observe that, due to \rife{lip}, there exists $  \mathcal R$, only depending on $L, \kappa$ and $\|m_0\|_\infty, \|\bar m\|_\infty$, such that 
\be\label{rhoR}
\|\rho\|_\infty \leq { \mathcal R}\qquad \forall \rho\in {\rm Range} (T)\,.
\ee
Therefore, up to replacing $ \mu$ with $\min(\mu, \mathcal R)$ in the first equation of \rife{sys}, we can assume that $F(x,\cdot)$ is globally bounded and Lipschitz, thanks to \rife{fK} used with $K= \|\bar m\|_\infty + \mathcal R$.  Using again \rife{lip} and the global bound of the right-hand side, we deduce (e.g. by Lemma \ref{lem72}) that   a global bound holds for $\|\tilde v(t)\|_2$. Hence, by (local) regularizing effect of parabolic equations,   there exists a constant $K>0$ (only depending on $L,\kappa$ and ${ \mathcal R}$) such that 
\be\label{lipv}
\| Dv(t)\|_\infty \leq K \qquad \forall t\leq T-1\,,\qquad \forall \mu \in X\,.
\ee
The continuity of the operator $\Phi$ in $C^0([0,T]; L^2(\T))$ (hence in $X$) is a routine stability argument for parabolic equations, due to the Lipschitz character of $H$ and the boundedness of $F$. In addition, since $\bar m, m_0\in C^{0,\alpha}$, by standard regularity results  (see \cite[Chapter V, Thms 1.1 and 2.1]{LSU}) we have that the range of $\Phi$ is bounded in $C^{\alpha/2,\alpha}(Q_T)$, in particular   its closure is compact in $X$. 
Thus, $\Phi$ is a compact and continuous operator. In order to apply Schaefer 's fixed point theorem, we 
 are left to prove  the following claim: there exists a constant $M>0$ such that 
\be\label{claim}
||| \mu|||\leq M \quad \hbox{ for every $\mu\in X$ and every $\sigma\in [0,1]$ such that $\mu= \sigma \Phi(\mu)$.}
\ee 
Now we observe that  the estimate  \rife{claim} follows from Lemma \ref{apriori} (which will be applied in $(0,T-1)$ due to \rife{lipv}). Indeed, if  $\mu= \sigma \Phi(\mu)$,  then $(\mu,v)$  is a solution to 
\rife{ls} with $\mu_0= m_0-\bar m$, $v_{T-1}= v(T-1,x)$, and where $h(x,p), f(x,s), B(x,p)$ are defined by 
\begin{align*}
& h(x,p):= H(x,D\bar u(x) + p) - H(x,D\bar u(x))
\\
&  f(x,\mu):= F(x,\bar m(x) + \mu) - F(x,\bar m(x))
\\
& 
B(x,p):= \bar m(x) \left[ H_p(x,D\bar u(x) + p)- H_p(x, D\bar u(x))\right]\,.
\end{align*}
Using \rife{rhoR} and \rife{lipv}, together with \rife{fK} and \rife{Hk}, the functions  $h(x,p), f(x,s), B(x,p)$ satisfy the conditions \rife{hT}-\rife{BT}, 
where we also used that $F(x,s)+ \gamma s$ is nondecreasing.
 
In addition, since $\mu= \sigma \Phi(\mu)$ implies $\mu(t,x)\geq -\sigma \bar m(x)$, we also have, for some constant $c_0$,
\begin{align*}
\sigma B(x,p)\cdot p -   &  \mu(t,x) (h(x,p)-h_p(x,p)\cdot p)      \geq \sigma B(x,p)\cdot p- \sigma  \bar m(x) (h_p(x,p)\cdot p-h(x,p))
\\
& = \sigma \bar m(x) \left[ - H_p(x, D\bar u(x))\cdot p+ H(x,D\bar u(x) + p) - H(x,D\bar u(x))\right]
\\
& \geq \sigma\,c_0\,  |p|^2\qquad \forall(t,x)\in Q_T, \forall p\in \R^d\,: |p|\leq K\,,
\end{align*}
where we used the local uniform convexity of $H$  and that $\bar m>0$. Therefore, condition \rife{summa} holds too. 
Applying Lemma \ref{apriori} we deduce that there exists a constant $c$ (independent of $\sigma, T$) such that
$$
\|\mu(t)\|_2 + \|\tilde v(t)\|_2 \leq  c\, [\|\mu_0\|_2+\|\tilde  v(T-1)\|_2] \left( e^{-\omega t}+ e^{-\omega (T-t)}\right) \qquad \forall t\in (0,T-1)\,.
$$
Since $\| \tilde v\|_2$  is uniformly bounded, this yields 
$$
\|\mu(t)\|_2   \leq  M \left( e^{-\omega t}+ e^{-\omega (T-t)}\right) \qquad \forall t\in (0,T-1)\,.
$$
The estimate extends to  $(0,T)$ because $\mu$ is also uniformly bounded, thus we proved that $||| \mu|||\leq M$ for some $M$ independent of $\sigma,T$.
This proves \rife{claim} and concludes the fixed point argument. Eventually, one can upgrade the estimate in $(1,T-1)$ to the $L^\infty$-norm of $\mu(t)$ and $Dv(t)$ by the  regularizing effect in the two equations. This latter argument is already developed e.g. in \cite{P-MTA}. 

{\it Step 3.} (conclusion of the general case) Assume now that $m_0 \in \PT$ and $(u,m)$ is any solution of \rife{mfgT}. Due to \rife{lip}, by standard regularizing effect we have that $m(t) \in C^{0,\alpha}(\T)$ for some $\alpha\in (0,1)$, and that $\|m(t)\|_\infty\leq c\, (t-t_0)^{-{d/2}}$, for every $t\in (t_0,t_0+1)$ and every $t_0>0$. Therefore, $m$ is uniformly bounded (globally in time) in the interval $(1,T)$. Similarly, again by \rife{lip} and parabolic regularity, we have $\|Du(t)\|_\infty\leq C$, $t\in (0,T-1]$,  for some $C$ only depending on $L$ (the Lipschitz bound of $H$) and the global bounds of $m$, $\tilde u$. Therefore, $(u,m)$ satisfies \rife{MUT}  in the interval $(1,T-1)$, for some $\mathcal M$, $\mathcal U$ only depending on $L$. By Theorem \ref{uniq}, there exists $\gamma_L$ such that problem \rife{mfgT} admits at most one solution  in $(1,T-1)$ if $\gamma\leq \gamma_L$. This means that $(u,m)$ coincides with the solution built in Step 1 in the interval $(1,T-1)$, with initial-terminal conditions given by $m(1)$ and $u(T-1)$. Hence $(u,m)$ satisfies the exponential estimate \rife{turnp2}.
\qed

\vskip1em

We now observe that, as a consequence of the previous result, any globally bounded solution has a stationary attractor, provided the anti-monotonicity constant is sufficiently small.

\begin{cor}\label{boundT} Assume that $(u^T,m^T)$ is a solution of \rife{mfgT} which satisfies
$$
\sup_{[0, T] \times \T} |D u^T| \le \mathcal U
$$
for some $\U$ independent of $T$.  Assume that  $F,G,H$ satisfy conditions  \rife{fK}, \rife{gK} and \rife{liploc}-\rife{Hk} respectively.

Then, there exists some $\gamma$, only depending on $\kappa, \mathcal M, \mathcal U$, and on $F,H$ (through the local bounds  induced by $\mathcal M, \mathcal U$) such that 
if $s\mapsto F(x,s)+ \gamma s$ is nondecreasing we have
\be\label{newturn}
 \| m^T(t)-\bar m\|_\infty + \| Du^T(t)-D\bar u\|_\infty \leq M (e^{-\omega t}+ e^{-\omega (T-t)}) \qquad \forall t\in (1,T-1)\,,
\ee
for some $\omega, M>0$ (independent of $T$) and some   $(\bar u, \bar m)$  solution of the ergodic problem \rife{MFGergo}.
\end{cor}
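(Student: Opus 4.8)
The plan is to reduce the statement to Theorem~\ref{longtime} by truncating the Hamiltonian in the gradient variable so that it becomes globally Lipschitz; this is legitimate because, by hypothesis, $H$ is only ever evaluated along gradients of size $\le\mathcal U$.

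First I would record the a priori bounds. From $\|Du^T\|_\infty\le\mathcal U$ and \eqref{liploc} used with $K=\mathcal U$, the drift $H_p(\cdot,Du^T)$ of the Fokker--Planck equation in \eqref{mfgT} is bounded by $L_{\mathcal U}$; since $m^T\ge0$ and $\intT m^T(t)=1$, standard parabolic estimates (\cite[Theorem III.7.1]{LSU}) give $\sup_{[1,T]\times\T}m^T\le\mathcal M$ with $\mathcal M$ depending only on $\kappa$ and $L_{\mathcal U}$, hence only on $\kappa,\mathcal U$ and $H$. Consequently, along $(u^T,m^T)$ one only ever uses the values of $F$ on $[0,\mathcal M]$ and those of $H,H_p,H_{pp}$ on $\{|p|\le\mathcal U\}$, so all the local constants from \eqref{liploc}--\eqref{Hk} and \eqref{fK} that will enter the argument are controlled by $\kappa,\mathcal M,\mathcal U$ and $F,H$.

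Next I would set $R:=\mathcal U+1$ and pick a Hamiltonian $\tilde H$ that coincides with $H$ on $\{|p|\le R\}$, is $C^2$ on $\T\times\R^d$, still satisfies the local uniform convexity \eqref{Hk}, and moreover is globally Lipschitz in $p$, i.e.\ satisfies \eqref{lip} for some $L_{\tilde H}$ depending only on $R$ (hence on $\mathcal U$) and on $H$ near $\{|p|=R\}$. Such an extension exists: global Lipschitzness and \emph{local} (as opposed to global) uniform convexity are perfectly compatible --- for $|p|$ large one may take $\tilde H$ equal to an affine function of $p$ plus $\varepsilon\sqrt{1+|p|^2}$, glued on an annulus in a $C^2$, convex fashion --- and this is no stronger than the standing assumptions of Theorem~\ref{longtime} itself. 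Since $|Du^T|\le\mathcal U<R$ everywhere in $(0,T)$, the pair $(u^T,m^T)$ is \emph{also} a classical solution of \eqref{mfgT} with $\tilde H$ in place of $H$, with the same $m_0$ and with the ($m$-independent) terminal datum $\hat G(x):=u^T(x,T)=G(x,m^T(T))\in W^{1,\infty}(\T)$. Applying Theorem~\ref{longtime} to this $\tilde H$ and this $\hat G$, I obtain $\gamma>0$ depending only on $\kappa,L_{\tilde H},F,H$ --- hence only on $\kappa,\mathcal M,\mathcal U,F,H$ --- such that, whenever $F(x,s)+\gamma s$ is nondecreasing, the pair $(u^T,m^T)$ satisfies the estimate \eqref{newturn} with some $\omega,M>0$ independent of $T$ and with $(\bar\lambda,\bar u,\bar m)$ the stationary solution of \eqref{MFGergo} for $\tilde H$ provided by Proposition~\ref{erg}.

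The remaining --- and main --- obstacle is to identify $(\bar\lambda,\bar u,\bar m)$ with a solution of the ergodic problem for the \emph{original} $H$: this requires $\|D\bar u\|_\infty\le R$, which is not obvious a priori since the elliptic gradient bound for $\bar u$ only refers to $\tilde H$. There is an apparent circularity --- the truncation radius has to be fixed before the gradient bound of the ergodic state is known --- which I would break a posteriori through the turnpike estimate itself: evaluating \eqref{newturn} at $t=T/2$ gives $\|D\bar u\|_\infty\le\|Du^T(T/2)\|_\infty+2Me^{-\omega T/2}\le\mathcal U+2Me^{-\omega T/2}$, and, since $(\bar u,\bar m)$ does not depend on $T$, letting $T\to\infty$ along the family of solutions (the setting in which a uniform bound $\mathcal U$ is meaningful) yields $\|D\bar u\|_\infty\le\mathcal U<R$. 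Because $\tilde H$ agrees with $H$ on $\{|p|\le R\}$, the triple $(\bar\lambda,\bar u,\bar m)$ then solves \eqref{MFGergo} for $H$, and \eqref{newturn} is exactly the asserted estimate. Everything else is routine, once one notices that Theorem~\ref{longtime} and the a priori Lemma~\ref{apriori} behind it only involve the Hamiltonian through its values on $\{|p|\le K\}$ for a suitable $K$.
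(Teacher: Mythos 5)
Your proposal is correct and takes essentially the same route as the paper's own proof. Both arguments truncate $H$ outside a ball $\{|p|\le R\}$ with $R\ge\U$ to obtain a globally Lipschitz Hamiltonian $\tilde H$ that still satisfies \eqref{Hk}, observe that $(u^T,m^T)$ is simultaneously a solution of the MFG system with $\tilde H$ since $|Du^T|\le\U$, invoke Theorem \ref{longtime} for $\tilde H$ to get the turnpike estimate, and then deduce $\|D\bar u\|_\infty\le\U$ a posteriori from \eqref{newturn} evaluated at $t=T/2$ (so that the ergodic state is in fact a solution for the original $H$). The only cosmetic differences are your choice $R=\U+1$ versus the paper's use of $\U$ directly, and a different explicit recipe for the globally Lipschitz, locally uniformly convex extension (the paper glues $H\zeta(p/\U)+C\vfi(|p|/\U)$ with a cut-off $\zeta$ and a radial convex $\vfi$); neither affects the argument.
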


\begin{proof}  We first build a globally Lipschitz  extension of  the Hamiltonian function $H(x,p)$. Namely, we consider a function $\tilde H(x,p)$ which still satisfies \rife{Hk} and such that
$$
\tilde H(x,p) \equiv H(x,p) \quad \hbox{for $|p| \leq \U$}\,,\qquad |H_p(x,p)| \leq C_\U\quad \forall (x,p) \in \R\times \R^d
$$
for some number $C_\U$ only depending on $\U$ (eventually through some constants  in \rife{liploc}-\rife{Hk} depending on $\U$). 

An example of a similar extension can be built as follows. First of all we take a cut-off function $\zeta\in C^2_c(\R^d)$ such that $\zeta \equiv 1 $ for $|p|\leq 2$ and $\zeta\equiv 0$ for $|p|>3$; then we take a $C^2$ real function $\vfi(r)$ such that $\vfi(r)\equiv 0$ if $|r|\leq \frac32$, $\vfi$ is increasing, locally uniformly convex for $r\in (\frac32, +\infty)$ and globally Lipschitz continuous. Then the function 
$$
\tilde H(x,p):= H(x,p) \zeta\left(\frac p{\U}\right) + C \vfi\left(\frac {|p|}\U\right)
$$
satisfies the required properties for  a convenient choice of $C$ (which will only depend on $\U$).

Now, we can apply Theorem \ref{longtime} to the MFG system
\begin{equation}\label{MFGtildeH}
\begin{cases}
-u_t - \kappa\Delta u + \tilde H(x, Du) = F(x, m(t)), & t 	\in (0, T) \\
m_t - \kappa \Delta m - {\rm div}(m \tilde H_p(x, Du)) = 0, & t \in (0, T) \\
m(x, 0)=m_0(x), \qquad u(x, T) = G(x, m(T))
\end{cases}
\end{equation}
so there exists some $\gamma>0$ such that if $s\mapsto F(x,s)+ \gamma s$ is nondecreasing, then the ergodic stationary problem (corresponding to $\tilde H(x,p)$) has a unique solution $(\bar m,\bar u)$
and any solution of \rife{MFGtildeH} satisfies \rife{turnp2}. This value of $\gamma$ depends on $\tilde H$, so it actually depends on $\U$ and on the functions $F,H$. Now, since $\|Du^T\|_\infty\leq \U$, we have $\tilde H(x,Du^T)=H(x,Du^T)$ so the result applies to the given solution of the original problem \rife{mfgT}. Therefore, $(u^T, m^T)$ satisfies \rife{newturn}. It remains only to show that $(\bar m, \bar u)$ is solution with $H(x,p)$ rather than with $\tilde H(x,p)$.  But this must necessarily be true, because \rife{newturn} implies, e.g., that $ Du^T\left(x,\frac T2\right) \to D\bar u(x)$, so $\|D\bar u\|_\infty \leq \U$. This also proves that $(\bar m, \bar u)$ does not depend on the extension
$\tilde H(x,p)$ which was constructed.
\end{proof}

\begin{rem} The previous result is new even in case that $F(x,\cdot)$ is nondecreasing. It means that any solution $(u^T,m^T)$ such that $Du^T$ is uniformly  bounded
exhibits a  turnpike  behavior in long horizon and approximates (for most of time) a  stationary solution $(\bar m, \bar u)$.  So far this shows that a global (in time) gradient bound for $u^T$ is  sufficient for the turnpike property to hold.
\end{rem}

\vskip0.5em
Corollary \ref{boundT} may be applied, for instance, to the case of superlinear Hamiltonian with quadratic growth when the cost function $F$ has a moderate growth in the $m$ variable, i.e. the model case (B) addressed in the previous Section.  The corollary below is in particular a consequence of Corollary \ref{boundT} and Theorem \ref{thmquadH}.

\begin{cor} Assume that $H$ and $F$ satisfy the conditions   \eqref{globHass} and \eqref{fK},  \eqref{fass}, respectively. For $m_0\in \elle\infty$ and $G$ satisfying \rife{gut2}, let $(u^T,m^T)$ be solution of \rife{mfgT}.

Then, there exists some $\gamma$, only depending on $\kappa$, $\|m_0\|_\infty$, and on $F,H$ (through constants appearing in the assumptions above, and the local bounds induced by $\mathcal M, \mathcal U$) such that 
if $s\mapsto F(x,s)+ \gamma s$ is nondecreasing we have
$$
 \| m^T(t)-\bar m\|_\infty + \| Du^T(t)-D\bar u\|_\infty \leq M (e^{-\omega t}+ e^{-\omega (T-t)}) \qquad \forall t\in (1,T-1)\,,
$$
for some $\omega, M>0$ (independent of $T$) and some   $(\bar u, \bar m)$  solution of the ergodic problem \rife{MFGergo}.
\end{cor}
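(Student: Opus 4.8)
The plan is to deduce the statement directly from Theorem~\ref{thmquadH} and Corollary~\ref{boundT}; the argument then reduces to producing the uniform-in-$T$ bounds required by the latter and checking its structural hypotheses. First, since $H$ satisfies \rife{globHass}, $F$ satisfies \rife{fass} together with the local Lipschitz bound \rife{fK}, and $G(x,m(T))=u_T(x)\in C^2(\T)$ satisfies \rife{gut2}, Theorem~\ref{thmquadH} applies and provides constants $\mathcal M,\mathcal U>0$ depending only on $\kappa,\alpha,c_0,c_F,\|m_0\|_\infty$ --- and crucially \emph{independent of $T$} --- such that every classical solution $(u^T,m^T)$ of \rife{mfgT} obeys \rife{MUT}. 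In particular $\sup_{[0,T]\times\T}|Du^T|\le\mathcal U$ with $\mathcal U$ independent of $T$, which is exactly the running assumption of Corollary~\ref{boundT}.

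Next I would check that the remaining hypotheses of Corollary~\ref{boundT} are in force. The two-sided bound $c_0^{-1}I_d\le H_{pp}\le c_0 I_d$ in \rife{globHass} yields \rife{Hk} with $\alpha_K=c_0^{-1}$ and $\beta_K=c_0$ for every $K$, while the linear growth of $H_p$ (again a consequence of \rife{globHass}) makes $H_p$ bounded on each ball $\{|p|\le K\}$, so that \rife{liploc} holds as well. Condition \rife{fK} on $F$ is assumed, and since $G(x,m(T))=u_T(x)$ does not depend on $m$, condition \rife{gK} is trivially satisfied. Hence Corollary~\ref{boundT} is applicable to $(u^T,m^T)$.

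Applying Corollary~\ref{boundT} then produces a threshold $\gamma>0$ --- depending only on $\kappa,\mathcal M,\mathcal U$ and on the local behaviour of $F,H$ on the compact sets fixed by $\mathcal M,\mathcal U$, hence ultimately on $\kappa$, $\|m_0\|_\infty$ and the structural constants in \rife{globHass}, \rife{fK}, \rife{fass} --- such that, whenever $s\mapsto F(x,s)+\gamma s$ is nondecreasing, the solution satisfies \rife{newturn} for some $\omega,M>0$ independent of $T$ and some $(\bar u,\bar m)$ solving the ergodic problem \rife{MFGergo}; this is exactly the asserted estimate. There is no genuine obstacle here, since the corollary merely packages two results already established in the paper; the one thing to be careful about is the bookkeeping of constants, and in particular the fact that the bounds $\mathcal M,\mathcal U$ furnished by Theorem~\ref{thmquadH} are independent of the horizon --- it is precisely this independence that allows Corollary~\ref{boundT} to be invoked uniformly in $T$, and hence that makes the rates $\omega$ and the constant $M$ in \rife{newturn} horizon-free.
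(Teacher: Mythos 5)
Your argument is correct and is exactly the route the paper takes: the text preceding this corollary states explicitly that it is "a consequence of Corollary \ref{boundT} and Theorem \ref{thmquadH}," and your proposal does nothing more than fill in the bookkeeping of hypotheses (the $T$-independent bound on $Du^T$ from Theorem~\ref{thmquadH}, the verification that \eqref{globHass} implies \eqref{liploc}--\eqref{Hk}, and the trivial \eqref{gK} for $m$-independent $G$). Nothing is missing.
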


\section{The stationary feedback and the convergence of $u^T$}

In this section we are going to see how the   exponential turnpike estimate established so far implies the convergence of $u^T(t)-\bar \lambda (T-t)$, as well as of $m^T(t)$, for any fixed $t>0$.  

From now on, and throughout the rest of the paper, we will consider only the case (A) discussed above, which involves globally Lipschitz and locally uniformly convex Hamiltonians. Similar results can be proven for case (B) involving uniformly convex Hamiltonians and costs with mild growth, in view of the estimates obtained in Section 3, but they will not be stated explicitly for brevity.

We are now interested in a convergence for any $t>0$, so we are going to require that \rife{turnp2} holds in the whole interval $[0,T]$ (this is true in Theorem \ref{longtime} if  $m_0\in \elle\infty$  and $u_T\in W^{1,\infty}(\T)$):
\be\label{stima-exp}
  \|m^T (t) - \bar m\|_{L^\infty} + \| D u^T (t) - D\bar u\|_{L^\infty} \leq 
K(e^{-\omega t} + e^{-\omega (T-t)} ) \qquad \forall  t \in [0, T]\,.
\ee
We start by deducing  the following global bound as a corollary of \rife{stima-exp}.

\begin{cor}\label{stima-inf} Let $H,F,G$ satisfy \rife{liploc}, \rife{fK}, \rife{gK} respectively.   Assume that $(u^T,m^T)$ is a solution of \rife{mfgT} which satisfies \rife{stima-exp}. Then there exists  a constant $K$, independent of $T$, such that
\be\label{infbound}
\|u^T-\bar \lambda (T-t)\|_{\infty} \leq K\qquad \forall t\in [0,T]\,.
\ee
\end{cor}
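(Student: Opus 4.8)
The plan is to integrate the Hamilton–Jacobi equation for $u^T$ against the stationary measure $\bar m$ and exploit the exponential turnpike bound \rife{stima-exp} to control all the resulting error terms. First I would note that, writing $w(t) := u^T(t) - \bar\lambda(T-t) - \bar u$, the function $w$ solves a backward equation of the form $-w_t - \kappa\Delta w = -\big(H(x, Du^T) - H(x, D\bar u) - \bar\lambda\big) + F(x, m^T) - F(x,\bar m)$, where I have used that $(\bar\lambda,\bar u,\bar m)$ solves \rife{MFGergo} so that $\bar\lambda = -\kappa\Delta\bar u + H(x,D\bar u) - F(x,\bar m)$. The key is that $w(T) = G(x,m^T(T)) - \bar u$, which is bounded by \rife{gK} and the uniform bound on $\bar u$; and the right-hand side of the $w$-equation, once tested against $\bar m$, is integrable in time thanks to \rife{stima-exp}.

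Concretely, I would multiply the $w$-equation by $\bar m$, integrate over $\T$, and use $-\kappa\Delta\bar m - \dive(\bar m H_p(x,D\bar u)) = 0$ to rewrite $-\kappa\into \Delta w\,\bar m = -\kappa\into \Delta\bar m\, w = \into \dive(\bar m H_p(x,D\bar u))\, w = -\into \bar m\, H_p(x,D\bar u)\cdot Dw$. This yields
\[
-\frac{d}{dt}\into w(t)\bar m \;=\; -\into \bar m\big[H(x,Du^T) - H(x,D\bar u) - H_p(x,D\bar u)\cdot Dw\big] + \into \bar m\big[F(x,m^T) - F(x,\bar m)\big].
\]
The first bracket is $O(|Du^T - D\bar u|^2)$ uniformly (by $C^2$-convexity of $H$ on the relevant compact set, \rife{liploc}–\rife{Hk}, noting $\|Du^T\|_\infty$ and $\|D\bar u\|_\infty$ are bounded by \rife{stima-exp}), hence bounded by $K(e^{-\omega t} + e^{-\omega(T-t)})^2$; the second is $O(|m^T - \bar m|)$ by the local Lipschitz bound \rife{fK}, hence bounded by $K(e^{-\omega t} + e^{-\omega(T-t)})$. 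Both are integrable in $t$ over $(0,T)$ with a bound independent of $T$. Integrating from $t$ to $T$ and using the bound on $\into w(T)\bar m$ gives $\big|\into w(t)\bar m\big| \le K$ uniformly. Since $\bar m$ is bounded below by a positive constant (strong maximum principle applied to the stationary Fokker–Planck equation), and since $Dw(t) = Du^T(t) - D\bar u(t)$ has $L^\infty$-norm $\le K(e^{-\omega t}+e^{-\omega(T-t)}) \le 2K$, the Poincaré inequality $\|w(t) - \langle w(t)\rangle\|_\infty \le C\|Dw(t)\|_\infty \le C'$ together with $|\int w(t)\bar m| \le K$ pins down $|\langle w(t)\rangle| \le K$, hence $\|w(t)\|_\infty \le K$, which is exactly \rife{infbound}.

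The main obstacle I anticipate is purely a matter of bookkeeping rather than a genuine difficulty: one must be careful that all the constants $K$, $\omega$, and the convexity/Lipschitz constants are controlled through quantities independent of $T$ — this is guaranteed because \rife{stima-exp} already delivers $T$-independent $L^\infty$-bounds on $Du^T$ and $m^T$, which in turn fix the compact set on which \rife{liploc}, \rife{Hk}, \rife{fK}, \rife{gK} are invoked. A secondary minor point is justifying the duality computation (integration by parts in $x$ and the fundamental theorem of calculus in $t$) for classical solutions, which is routine given the regularity assumed. No other step should require more than standard parabolic estimates.
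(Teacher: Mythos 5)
Your argument is correct but takes a genuinely different route from the paper. The paper proves the bound by a comparison-principle argument: observing from \rife{stima-exp} and \rife{fK} that $|F(x,m^T)-F(x,\bar m)|\leq c(e^{-\omega t}+e^{-\omega(T-t)})$, and from \rife{gK} that $G(x,m^T(T))$ is bounded, it checks that $\bar u+\bar\lambda(T-t)\pm\vfi(t)$, with $\vfi(t)=M(1+\int_t^T(e^{-\omega s}+e^{-\omega(T-s)})\,ds)$, are super- and sub-solutions of the HJB equation satisfied by $u^T$, and concludes by comparison. You instead integrate the equation for $w=u^T-\bar\lambda(T-t)-\bar u$ against the invariant measure $\bar m$, obtain a $T$-uniform bound on $\into w\,\bar m$ from the exponentially integrable right-hand side, and then upgrade to an $L^\infty$ bound using the gradient decay $\|Dw(t)\|_\infty\leq K(e^{-\omega t}+e^{-\omega(T-t)})$ combined with the oscillation inequality $\|w-\lg w\rg\|_\infty\leq C\|Dw\|_\infty$ on $\T$. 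Both arguments yield the same conclusion. The paper's route is a bit lighter: it only needs the decay of $\|m^T-\bar m\|_\infty$, whereas yours also uses the decay of $\|Du^T-D\bar u\|_\infty$ (both are furnished by \rife{stima-exp}, so this costs nothing here). Note a small typo in your derivation: the equation for $w$ should read $-w_t-\kappa\Delta w=-\big(H(x,Du^T)-H(x,D\bar u)\big)+F(x,m^T)-F(x,\bar m)$, without the extra $\bar\lambda$ inside the bracket; your subsequent duality identity is nonetheless written correctly.
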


\begin{proof}  Due to estimate \rife{stima-exp}, and the local Lipschitz character of $F$, we have that
$$
|F(x,m^T)-F(x,\bar m)| \leq c\, (e^{-\omega t} + e^{-\omega (T-t)} )\qquad \forall t\in [0,T]\,.
$$
Moreover, using \rife{gK}, we  have that $G(x,m^T(T))$ is uniformly bounded. 
Hence, if we denote by $\vfi(t):= M(1+ \int_t^T (e^{-\omega s} + e^{-\omega (T-s)} )ds)$, we deduce that, for sufficiently large $M$, 
the functions $\bar u + \bar \lambda(T-t)+ \vfi(t)$ and $\bar u + \bar \lambda(T-t)- \vfi(t)$ are, respectively, super and sub solution to the equation satisfied by $u^T$. The comparison principle readily yields
$$
\bar u -\vfi(t) \leq u^T-\bar \lambda(T-t) \leq  \bar u + \vfi(t) \qquad \forall t\in [0,T], x\in \T
$$
which implies \rife{infbound}.
\end{proof}

Now we establish a uniqueness result for the limiting problem of the MFG system as $T\to \infty$; namely, we consider the problem
\be\label{MFG-infty}
\begin{cases}
-v_t + \bar \lambda - \kappa\Delta v + H(x, D v)= F(x,\mu(t))\,, & \hbox{$t\in (0,\infty)$}
\\
\mu_t- \kappa \Delta \mu -\dive( \mu\, H_p(x,D v))= 0\,,& \hbox{$t\in (0,\infty)$}
\\
\mu(x,0)= m_0(x)\,, \\
v\in L^\infty((0,\infty)\times \T)\,,\,\, Dv\in D\bar u + L^2((0,\infty)\times \T)\,,\, 
\end{cases}
\ee

\begin{lem}\label{uni-lim} Let $m_0\in \elle\infty$, $m_0\geq 0$. Assume that $H$ satisfies \rife{Hk} and \rife{lip}, and $F$ satisfies \rife{fK}.  There exists $\gamma>0$, only depending on $\|m_0\|_\infty, \kappa, L$ (appearing in \rife{lip}) and on $H,F$ (through \rife{Hk}, \rife{fK} for some $K$ only depending on $\|m_0\|_\infty, \kappa, L$)
such that, if $F(x,s)+\gamma s$ is nondecreasing, then there are at most one $\mu$ and  one $v$ (up to addition of a constant) which solve problem \rife{MFG-infty}.
\end{lem}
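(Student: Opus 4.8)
The plan is to reproduce the duality argument of Theorem~\ref{uniq}, but carried out on the half-line $(0,\infty)$, with the integrability requirement $Dv\in D\bar u+L^2((0,\infty)\times\T)$ playing the role of the terminal datum. As a preliminary, I would record the a priori bounds: since $H$ is globally Lipschitz in $p$ by \rife{lip} and $m_0\in\elle\infty$, every solution $(v_i,\mu_i)$ ($i=1,2$) of \rife{MFG-infty} satisfies $\|\mu_i\|_{L^\infty((0,\infty)\times\T)}\le\mathcal M$ with $\mathcal M$ depending only on $\kappa,L,\|m_0\|_\infty$; then $F(x,\mu_i)$ is bounded, and interior parabolic estimates for the backward HJB equation on intervals $[t,t+2]$ (always available on $(0,\infty)$), together with $v_i\in L^\infty$, give $\|Dv_i(t)\|_\infty\le\mathcal U$ for every $t>0$, with $\mathcal U$ depending on $\kappa,L,\|m_0\|_\infty$ and the local bounds of $F,H$. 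We then take $\gamma$ so small that $m\mapsto F(x,m)+\gamma m$ is nondecreasing on $[0,\mathcal M]$ and that the absorption below works; this fixes the dependence of $\gamma$ claimed in the statement. (We may also assume $\gamma\le\gamma_0$ of Proposition~\ref{erg}, so that $(\bar\lambda,\bar m,\bar u)$ is well defined.)

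Next, let $(v_1,\mu_1)$, $(v_2,\mu_2)$ be two solutions and set $w=v_1-v_2$, $\rho=\mu_1-\mu_2$, so that $\rho(0)=0$, $\into\rho(t)=0$ for all $t$, and $Dw\in L^2((0,\infty)\times\T)$. Exactly as in Theorem~\ref{uniq}, the convexity/duality identity integrated over $(0,T)$ gives
\[
-\into w(T)\rho(T)\ \ge\ \frac{\alpha_\U}{2}\int_0^T\!\!\into(\mu_1+\mu_2)|Dw|^2\ -\ \gamma\int_0^T\!\!\into\rho^2,
\]
where \rife{Hk} is used for the two Hamiltonian terms and the monotonicity of $F(x,m)+\gamma m$ for the coupling term. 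Applying Lemma~\ref{stimaRhoF} to the equation for $\rho$ on $(0,T)$ (whose constant is independent of $T$) and using $|H_p(x,Dv_1)-H_p(x,Dv_2)|\le\beta_\U|Dw|$ on $\{|Dv_i|\le\mathcal U\}$,
\[
\int_0^T\!\!\into\rho^2\ \le\ C\,\beta_\U^2\,\mathcal M\int_0^T\!\!\into\mu_2|Dw|^2 .
\]
Combining the two displays, for $\gamma$ small enough that $\gamma\,C\beta_\U^2\mathcal M\le\alpha_\U/4$ the $\rho^2$-term is absorbed and we obtain
\[
\frac{\alpha_\U}{4}\int_0^T\!\!\into\mu_2|Dw|^2\ \le\ -\into w(T)\rho(T)\qquad\text{for every }T>0 .
\]

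It remains to kill the right-hand side. Since $Dw\in L^2((0,\infty)\times\T)$, there is a sequence $T_n\to\infty$ with $\|Dw(T_n)\|_{L^2(\T)}\to0$; writing $w(T_n)=\tilde w(T_n)+\langle w(T_n)\rangle$ and using $\into\rho(T_n)=0$, Poincar\'e's inequality, and $\|\rho(T_n)\|_{L^2}^2\le\|\rho(T_n)\|_\infty\|\rho(T_n)\|_{L^1}\le4\mathcal M$, we get $\into w(T_n)\rho(T_n)=\into\tilde w(T_n)\rho(T_n)\to0$. Letting $T=T_n\to\infty$ in the last display (the left side is nondecreasing in $T$) forces $\int_0^\infty\!\!\into\mu_2|Dw|^2\le0$; since $\mu_2>0$ by the strong maximum principle, $Dw\equiv0$, i.e.\ $v_1(t)-v_2(t)$ is spatially constant for a.e.\ $t$. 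Then $H_p(x,Dv_1)=H_p(x,Dv_2)$, so $\rho$ solves a homogeneous Fokker--Planck equation with $\rho(0)=0$, hence $\mu_1=\mu_2$; inserting $v_1-v_2=c(t)$ into the difference of the two HJB equations yields $c'(t)=0$, so $v_1-v_2$ is a constant.

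The only genuinely new point, compared with the finite-horizon proof, is the treatment of $\into w(T)\rho(T)$: its boundedness in $T$ — needed to absorb the $\rho^2$-term — comes from the $L^\infty$ bounds on $v_i,\mu_i$, while its decay along a sequence $T_n$ relies on the spacetime integrability $Dw\in L^2$, which is exactly the piece of structure encoded in \rife{MFG-infty} in place of a terminal condition. Everything else is a transcription of Theorem~\ref{uniq}, with the minor caveat that Lemma~\ref{stimaRhoF} must be invoked on each $(0,T)$ with a $T$-uniform constant (legitimate thanks to the zero-mean property of $\rho$).
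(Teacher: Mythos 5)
Your proof is correct and reaches the same conclusion, but the mechanism you use to eliminate the ``boundary at infinity'' is genuinely different from the one in the paper. The paper never looks at a sharp boundary term $-\into w(T)\rho(T)$: instead it works with a smooth cutoff $\xi_R(t)=\xi(t/R)$ in time, so that the quantity $\xi_R(t)\into(\mu_1-\mu_2)(v_1-v_2)$ has vanishing boundary values at $t=0$ (since $\rho(0)=0$) and for $t\ge 2R$. The price is an extra term $\int_0^\infty\xi_R'(t)\into(\mu_1-\mu_2)(v_1-v_2)$, which after Poincar\'e--Wirtinger and the uniform $L^2$ bound on $\rho$ is controlled by $c\bigl(\frac1R\int_R^{2R}\into|Dw|^2\bigr)^{1/2}$, and this tends to $0$ as $R\to\infty$ because $Dw\in L^2((0,\infty)\times\T)$. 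You instead integrate the duality identity on $(0,T)$ directly, keep the boundary term $-\into w(T)\rho(T)$, observe that it equals $-\into\tilde w(T)\rho(T)$ (zero mean of $\rho$), bound it via Poincar\'e by $c\|Dw(T)\|_2$, and then pick a sequence $T_n\to\infty$ along which $\|Dw(T_n)\|_2\to 0$; monotonicity of the left side in $T$ then finishes the argument. The two arguments are morally equivalent --- both exploit exactly the $L^2((0,\infty))$ integrability of $Dw$ encoded in \rife{MFG-infty} to replace the missing terminal condition --- but yours avoids the cutoff machinery and is, if anything, a little more transparent. One small point worth making explicit in a write-up: the display $\frac{\alpha_\U}{4}\int_0^T\into\mu_2|Dw|^2\le -\into w(T)\rho(T)$ shows in particular that the boundary term is nonnegative for every $T$, which is a priori not obvious; this is what makes the ``limit along a good subsequence'' conclusive.
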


\begin{proof} We first observe that $\into \mu(t)= \into m_0$ for every $t$, so $\|\mu(t)\|_{\elle1}$ is uniformly bounded;  since $H_p$ is uniformly bounded by $L$, this implies that $\mu \in L^\infty((0,\infty)\times \T)$ and its norm is universally bounded by a constant only depending on $d,\kappa, L, \|m_0\|_\infty$. This provides with a uniform bound for $F(x,\mu)$ and in turn, as a consequence of Lemma \ref{lem72}, this implies that $\tilde v(t)$ and $Dv(t)$ are bounded in $\elle2$, uniformly in time.  
By local parabolic regularity we  deduce that $Dv\in L^\infty((0,\infty)\times \T)$ as well and its norm is bounded uniformly in time, for all solutions. 
We will therefore use the  upper and lower bounds of $H_{pp}$ for $|p|$ varying in a compact set {\it which is the same for all solutions $(\mu, v)$}.  

On account of the above ingredients, we follow  a standard argument for the proof of uniqueness.
Let $(\mu_1 , v_1 ), (\mu_2 , v_2 )$ be two solutions of \rife{MFG-infty}.
Consider a function $\xi_R (t) := \xi(t/R)$, where $\xi$ is a $C^1$ function such that $\xi \equiv 1$ in $(0, 1)$ and
with support in $(-1, 2)$. Using $\psi = (\mu_1 - \mu_2 )\xi_R$ as test function in the equation of $v_1 - v_2$, we
get as usual:
\begin{align*}
-\frac{d}{dt}  & \left(\xi_R(t) \into  (\mu_1 - \mu_2 )(v_1-v_2)dx  \right) = - \xi_R'(t)\into  (\mu_1 - \mu_2 )(v_1-v_2)dx \\
& + \xi_R(t) \into [F(\mu_1)-F(\mu_2)][\mu_1-\mu_2] \, dx 
\\
& + \xi_R(t) \into \mu_1\left[H(x,Dv_2)- H(x, Dv_1)- H_p(x, Dv_1) D(v_2-v_1)\right]dx
\\
& + \xi_R(t) \into \mu_2\left[H(x,Dv_1)- H(x, Dv_2)- H_p(x, Dv_2) D(v_1-v_2)\right]dx
\end{align*}
which implies, using the uniform convexity of $H$:
\begin{align*}
-\frac{d}{dt} \left(\xi_R(t) \into  (\mu_1 - \mu_2 )(v_1-v_2)dx \right) & \geq  - \xi_R'(t)\into  (\mu_1 - \mu_2 )(v_1-v_2)dx \\
& + \xi_R(t) \into [F(\mu_1)-F(\mu_2)][\mu_1-\mu_2] \, dx 
\\
& + c\, \xi_R(t) \into (\mu_1+ \mu_2) |D(v_1-v_2)|^2 dx\,,
\end{align*}
for some constant $c>0$.  
Integrating we get
\begin{align*}
c\, \int_0^\infty\xi_R(t) \into \mu_2\, |D(v_1-v_2)|^2 dxdt \leq \int_0^\infty \xi_R'(t)\into  (\mu_1 - \mu_2 )(v_1-v_2)dxdt \\
  + \gamma\,  \int_0^\infty \xi_R(t) \into |\mu_1-\mu_2|^2 \, dx dt
\end{align*}
where we used that $F(x,s)+ \gamma  s$  is monotone. Notice that $Dv_1-Dv_2\in L^2((0,\infty)\times \T )$ by  assumption.  Then, by Lemma \ref{stimaRhoF} (see \rife{77} with $\de=0$), we deduce
\[
\int_{0}^\infty \|(\mu_1-\mu_2)(t)\|^2_2 dt \le C   \int_{0}^\infty \int \mu_2^2 \, |D(v_1-v_2)|^2 dxdt  \leq C\int_{0}^\infty \int \mu_2  \, |D(v_1-v_2)|^2 dxdt
\]
for some constant $C$  depending on $\|H_{p}\|_\infty$, on    the   $L^\infty$ bound of $\mu_2$ and on the local upper bound of $H_{pp}$ (all being estimated only in terms of $\kappa, L, \|m_0\|_\infty$). Hence we deduce
\begin{align*}
(c- \gamma\, C) \int_0^\infty \xi_R(t) \into \mu_2(t) |D(v_1-v_2)|^2 dxdt \leq \int_0^\infty \xi_R'(t)\into  (\mu_1 - \mu_2 )(v_1-v_2)dxdt
\\
+ c \int_0^\infty (1-\xi_R(t)) \into \mu_2(t)  |D(v_1-v_2)|^2 dxdt\,.
\end{align*}
Using Poincar\'e-Wirtinger inequality and  the fact that $\mu_1 -\mu_2 $ is uniformly bounded in $\elle2$, due to the properties of $\xi_R(t)$  we get
\begin{align*}
(c- \gamma\, C) \int_0^\infty\xi_R(t) \into \mu_2(t) |D(v_1-v_2)|^2 dxdt \leq c \, \left(\frac1R \int_R^{2R} \into   |D(v_1-v_2)|^2dxdt\right)^{\frac12}
\\
+ c \int_0^\infty (1-\xi_R(t)) \into \mu_2(t) |D(v_1-v_2)|^2 dxdt 
\end{align*}
and  letting $R\to \infty$ we conclude 
$$
\int_0^\infty \int \mu_2(t) |D(v_1-v_2)|^2 dxdt=0\, 
$$
for a sufficiently small $\gamma$, only depending on $\|m_0\|_\infty, \kappa, L$ (eventually trough $F,H$).

Hence $Dv_1=Dv_2$ and, from the Fokker-Planck equation, this yields $\mu_1=\mu_2$. Finally, this implies $(v_1-v_2)_t=0$, hence $v_1(t,x)=v_2(t,x)+K$ for some constant $K\in \R$, and for every $(t,x)$. 
\end{proof}

We now establish the convergence of $u^T-\bar \lambda (T-t)$.

\begin{thm}\label{convuT}  Let $m_0\in \elle\infty$. Assume that $F(x,m)$  satisfies 
\rife{fK}, $ H(x,p)$  satisfies \rife{Hk} and \rife{lip},  and the final cost  $G$ satisfies \rife{gut}. Let $(u^T,m^T)$ be a solution of problem \rife{mfgT}.  

There exists $\gamma>0$,  only depending on $\|m_0\|_\infty, \kappa, L$ (and on the functions $F,H$), such that  if  $F(x,s) + \gamma s$
is nondecreasing,  then 
there exists a solution $(v,\mu)$ of \rife{MFG-infty} such that
$$
\lim\limits_{T\to \infty} u^T(x,t)-\bar \lambda(T-t)   = v(x,t)\,\qquad ; \qquad \lim\limits_{T\to \infty} m^T(x,t) = \mu(x,t) \qquad \forall t>0, x\in \T
$$
and the convergence is uniform in $[a,b]\times \T$, for any $[a,b]\subset [0, \infty)$.
\vskip0.5em
In particular, there exists a constant $\bar c\in \R$ such that $(v,\mu)$ is the unique solution of \rife{MFG-infty} such that $\lim\limits_{t\to \infty} \int v(t) = \bar c$.
\end{thm}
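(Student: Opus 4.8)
The plan is to turn the exponential turnpike estimate into compactness for the rescaled solutions, pass to the limit in the PDE system, and then use the uniqueness Lemma~\ref{uni-lim} to upgrade subsequential convergence to convergence of the whole family. First I would set $w^T:=u^T-\bar\lambda(T-t)$, so that $(w^T,m^T)$ solves on $(0,T)$ the two equations in \rife{MFG-infty}, with $w^T(T)=u_T$ and $m^T(0)=m_0$. Since $m_0\in\elle\infty$ and $G$ has the form \rife{gut}, Theorem~\ref{longtime} applies (in the form \rife{stima-exp}, valid up to the endpoints) for $\gamma$ small and gives, together with Corollary~\ref{stima-inf}, the uniform bounds $\|w^T\|_\infty\le K$, $\|Dw^T(t)\|_\infty+\|m^T(t)\|_\infty\le K$ and $\|m^T(t)-\bar m\|_\infty+\|Dw^T(t)-D\bar u\|_\infty\le K(e^{-\omega t}+e^{-\omega(T-t)})$. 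As $H$ has globally bounded gradient (case (A)) and $F(x,m^T)$ is bounded, interior parabolic regularity then yields uniform $C^{1+\beta}$-bounds in space--time for $w^T$ on $\{t\le T-1\}$ and H\"older bounds for $m^T$ (up to $t=0$, since $m^T(0)=m_0$). By Ascoli--Arzel\`a, along any $T_n\to\infty$ one extracts a subsequence with $w^{T_n}\to v$, $Dw^{T_n}\to Dv$, $m^{T_n}\to\mu$ locally uniformly; stability of parabolic equations shows $(v,\mu)$ solves the two PDEs of \rife{MFG-infty} with $\mu(0)=m_0$, while $\|w^T\|_\infty\le K$ and the uniform bound $\int_0^R\|Dw^{T_n}(t)-D\bar u\|_2^2\,dt\le 2K^2/\omega$ (let $n\to\infty$, then $R\to\infty$) give $v\in L^\infty$ and $Dv-D\bar u\in L^2((0,\infty)\times\T)$. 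Freezing $t$ and letting $T_n\to\infty$ in the turnpike bound also gives $\|\mu(t)-\bar m\|_\infty+\|Dv(t)-D\bar u\|_\infty\le Ke^{-\omega t}$; integrating the HJB over $\T$ and subtracting the ergodic identity $\bar\lambda+\into H(x,D\bar u)=\into F(x,\bar m)$ then yields $|\frac{d}{dt}\into v(t)|\le Ce^{-\omega t}$, so $\into v(t)\to\bar c$ as $t\to\infty$ for some $\bar c\in\R$.

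Next I would invoke Lemma~\ref{uni-lim}: for $\gamma$ small the $\mu$-component of a solution of \rife{MFG-infty} is unique, so $m^T\to\mu$ along the whole sequence, and $v$ is unique up to an additive constant. Since $\into(v+k)(t)\to\bar c+k$, the pair $(v,\mu)$ constructed above is the unique solution of \rife{MFG-infty} with $\lim_{t\to\infty}\into v(t)=\bar c$, which gives the ``in particular'' statement --- \emph{provided} $\bar c$ is independent of the extracted subsequence. This independence is equivalent to convergence of the full family $w^T$, so it is the last and main point.

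To pin $\bar c$ I would integrate the HJB of $w^T$ over $\T$, obtaining $\frac{d}{dt}\into w^T=g^T(t)$ with $|g^T(t)|\le C(e^{-\omega t}+e^{-\omega(T-t)})$, hence $\into w^T(t)=\into u_T-\int_t^T g^T(s)\,ds$; thus everything reduces to the convergence of the ``terminal contribution'' $\int_t^Tg^T(s)\,ds$. Splitting $\int_t^T=\int_t^{T-L}+\int_{T-L}^T$, the first integral tends to $\int_t^\infty g^\infty$ up to $O(e^{-\omega L})$ (using $g^T\to g^\infty$ locally uniformly under exponential domination), while in the second the substitution $s=T-\sigma$ makes $\psi^T(\sigma):=w^T(T-\sigma)$ solve the \emph{forward} viscous HJB $\partial_\sigma\psi^T-\kappa\Delta\psi^T+H(x,D\psi^T)+\bar\lambda=F(x,m^T(T-\sigma))$ with the \emph{fixed} datum $\psi^T(0)=u_T$, and one gets $\bar c=\lim_{\sigma\to\infty}\into\psi(\sigma)$, the long-time limit of that equation with forcing $F(x,\nu(\cdot))$, where $\nu$ is the corresponding limit of $m^T(T-\cdot)$. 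The hard part will be showing this limit is subsequence-independent: I expect one has to identify $(\psi,\nu)$ as the unique solution of the \emph{terminal} forward--backward MFG system obtained by reversing time near $t=T$ (with $\psi(0)=u_T$, $\nu$ bounded on $(0,\infty)$, $\nu(\sigma)\to\bar m$), whose uniqueness follows by the duality argument of Lemma~\ref{uni-lim}; since $\psi(0)=u_T$ is prescribed, $\psi$ is then fully determined, and so is $\bar c$. Once $\bar c$ is pinned, every subsequential limit of $w^T$ equals $v$, and the claimed locally uniform convergences $u^T-\bar\lambda(T-t)\to v$, $m^T\to\mu$ follow.
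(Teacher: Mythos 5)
Your setup is sound and matches the paper: the compactness from the turnpike bound, the extraction of a subsequential limit $(v,\mu)$ solving \rife{MFG-infty}, the reduction to showing that the constant $\bar c = \lim_{t\to\infty}\into v(t)$ is subsequence-independent, and the observation that Lemma~\ref{uni-lim} then upgrades this to full convergence. However, the crucial final step — pinning $\bar c$ — contains a genuine gap, and it is precisely where the paper's route diverges.

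You propose to identify a ``terminal'' limit system for $\psi(\sigma) := \lim w^T(T-\sigma)$, $\nu(\sigma) := \lim m^T(T-\sigma)$, and assert that ``its uniqueness follows by the duality argument of Lemma~\ref{uni-lim}.'' This is not an application of that lemma: after the substitution $s = T-\sigma$, the HJB becomes forward in $\sigma$ (with prescribed \emph{initial} data $\psi(0)=u_T$) while the Fokker--Planck equation becomes backward in $\sigma$ (with a condition at $\sigma=\infty$, $\nu\to\bar m$). This is the \emph{reversed} forward--backward structure of a MFG system, not the structure covered by Lemma~\ref{uni-lim}, whose proof is calibrated to a forward FP equation with initial data $\mu(0)=m_0$ and a backward HJB with a growth condition at $t=\infty$. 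You would need to redo the duality identity from scratch (the time orientation flips the sign of $\frac{d}{dt}\into(\psi_1-\psi_2)(\nu_1-\nu_2)$), check that the boundary terms at $\sigma=0$ and $\sigma=\infty$ vanish in the right way, and establish a version of the Fokker--Planck $L^2$ estimate (Lemma~\ref{stimaRhoF}) that is adapted to the backward direction, in order to absorb the $\gamma$-anti-monotone term. None of this is carried out, and it is not immediate; your proposal is careful everywhere else but hand-waves exactly at the point that is the content of the theorem.

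The paper does not introduce a terminal limit system at all. Instead it compares the two finite-horizon solutions directly via the shifts $(\hat v^T,\hat\mu^T)(t)=(v^T,m^T)(t+T)$ and $(\hat v^{T'},\hat\mu^{T'})(t)=(v^{T'},m^{T'})(t+T')$ on the common interval $(-\tau,0)$. Because $G=u_T$ is $m$-independent, both shifted solutions share the \emph{same} terminal datum $u_T$ at $t=0$, and the turnpike estimate makes their initial data at $t=-\tau$ exponentially close (since both $m^T(T-\tau)$ and $m^{T'}(T'-\tau)$ are exponentially near $\bar m$). The usual duality identity for the MFG system in its original orientation, combined with Lemma~\ref{stimaRhoF}, then yields a quantitative Cauchy-in-$T$ estimate $\|v^T(T-\tau)-v^{T'}(T'-\tau)\|_\infty\le c\tau(e^{-\omega\tau}+e^{-\omega(T-\tau)}+e^{-\omega(T'-\tau)})$. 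Integrating the HJB in time to relate $\into v^T(t)$ to $\into v^T(T-\tau)$ (up to $O(e^{-\omega t}+e^{-\omega\tau})$) and letting first $T,T'\to\infty$ and then $\tau\to\infty$, $t\to\infty$, forces any two subsequential limits to coincide. This direct Cauchy argument is what replaces the well-posedness of your terminal system. If you want to pursue your route you should prove the reversed duality and the well-posedness of the terminal system explicitly; otherwise the argument as written is incomplete.
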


\begin{proof} We set $v^T:= u^T(x,t)-\bar \lambda(T-t)$. Since $m_0\in \elle\infty$  and $u(T) \in W^{1,\infty}(\Omega)$, we know that the exponential estimate \rife{turnp2} holds for $t\in [0,T]$.  Hence, from Corollary \ref{stima-inf}, we have that $\|v^T\|_\infty$ and $\|Dv^T\|_\infty$ are  bounded 
uniformly with respect to $T$; and by parabolic regularity,  this readily implies that $v^T$ is locally (in time) relatively compact 
in the uniform topology. Similarly, $m^T$ is locally bounded in H\"older (time-space) norms, and it is relatively compact in the uniform topology (locally in time). Then, it is straightforward to see that $(v^T, m^T)$ converges, up to subsequences, to a solution $(v,\mu)$ of \rife{MFG-infty}. We only need to prove that the limit function $v$ is the same for all subsequences.

To this purpose, we develop an idea  from \cite{CaPo}:  for $T,T'$ we consider $v^T, v^{T'}$ and we define the shifted functions
$$
(\hat v^T, \hat \mu^T):= (v^T(t+T), m^T(t+T))\,,\,\,\, \quad (\hat v^{T'}, \hat \mu^{T'}):= (v^{T'}(t+T'), m^{T'}(t+T'))\,.
$$
Both $(\hat v^T, \hat \mu^T)$ and $(\hat v^{T'}, \hat \mu^{T'})$ are solutions of MFG systems in the interval $(-\tau,0)$; the usual 
estimate gives
$$
-\frac{d}{dt}      \into  (\hat \mu^T - \hat \mu^{T'} )(\hat v^T-\hat v^{T'})dx   \geq   
   -\gamma \into (\hat \mu^T - \hat \mu^{T'})^2 \, dx 
   + c\,   \into (\hat \mu^T + \hat \mu^{T'})|D(\hat v^T-\hat v^{T'})|^2 dx\,,
$$
where we used that $F(x,s)+ \gamma s$ is monotone and the uniform bound from below of $H_{pp}$ on compact subsets.
Integrating and using the final condition, 
we have
\begin{align*}& 
c \int_{-\tau}^0\into (\hat \mu^T + \hat \mu^{T'})|D(\hat v^T-\hat v^{T'})|^2 dx \leq   \gamma \int_{-\tau}^0 \into (\hat \mu^T - \hat \mu^{T'})^2 \, dx 
\\
& \qquad +  \into  (\hat \mu^T(-\tau) - \hat \mu^{T'}(-\tau) )(\hat v^T(-\tau)-\hat v^{T'}(-\tau))dx
\\
& \qquad \leq   \gamma \int_{-\tau}^0 \into (\hat \mu^T - \hat \mu^{T'})^2 \, dx 
+\|m^T (T-\tau) - m^{T'}(T'-\tau)\|_{L^2}\,  \| Du^T (T-\tau) - Du^{T'} (T'-\tau)\|_{L^2}
\end{align*}
which yields, thanks to \rife{stima-exp},
\begin{align*}
&   \int_{-\tau}^0\into (\hat \mu^T + \hat \mu^{T'})|D(\hat v^T-\hat v^{T'})|^2 dx  \leq c\, \gamma  \int_{-\tau}^0 \into (\hat \mu^T - \hat \mu^{T'})^2 \, dx
\\
& \qquad \qquad + c (e^{-\omega \tau} + e^{-\omega (T-\tau)} + e^{-\omega (T'-\tau)})^2
\end{align*}
where, as usual,  we denote generically by $c$ possibly different constants (which may vary from line to line)  independent of $T, T'$.
Applying Lemma \ref{stimaRhoF}  to the equation of $\hat \mu^T - \hat \mu^{T'}$,  we have
\begin{align*}
\int_{-\tau}^0 \int (\hat \mu^T - \hat \mu^{T'})^2 \, dx  & 
\leq  C\, \int_{-\tau}^0\int (\hat \mu^{T})^2|D(\hat v^T-\hat v^{T'})|^2 dx  
\\  & \qquad + C \|m^T (T-\tau) - m^{T'}(T'-\tau)\|_{L^2}^2\,.
\end{align*}
Thus for $\gamma$ sufficiently small we conclude that
$$
\int_{-\tau}^0\int (\hat \mu^T + \hat \mu^{T'}) |D(\hat v^T-\hat v^{T'})|^2 dx  \leq  c \, (e^{-\omega \tau} + e^{-\omega (T-\tau)} + e^{-\omega (T'-\tau)})^2\,.
$$
A similar estimate is then deduced for   $\sup_{t\in [-\tau,0]} \|\hat \mu^T(t) - \hat \mu^{T'}(t)\|_{2}$, from the above estimates.

With a  bootstrap argument, using the global $L^\infty$ bounds for $D(v^T-v^{T'})$ and for $m^T  - m^{T'}$, the previous $L^2$ bound can be updated into  a $L^\infty$ bound:
$$
\|\hat \mu^T(t) - \hat \mu^{T'}(t)\|_\infty \leq c \,(e^{-\omega \tau} + e^{-\omega (T-\tau)} + e^{-\omega (T'-\tau)}) \qquad \forall t\in [-\tau,0]\,.
$$
In particular, those bounds are inherited by $F(x,\hat \mu^T)-F(x,\hat \mu^{T'})$ as well as by $G(x,\hat \mu^T(0))-G(x,\hat \mu^{T'}(0))$. 
Therefore, the maximum principle implies that 
$$
\|\hat v^T(t)-\hat v^{T'}(t)\|_\infty \leq c\, \tau (e^{-\omega \tau} + e^{-\omega (T-\tau)} + e^{-\omega (T'-\tau)})\qquad \forall t\in [-\tau,0]\,.
$$
In particular, we have proved that
\be\label{cauchy}
\|v^T(T-\tau)-  v^{T'}(T'-\tau)\|_\infty \leq c \tau (e^{-\omega \tau} + e^{-\omega (T-\tau)} + e^{-\omega (T'-\tau)})\,.
\ee
Now we consider the equation of $v^T-\bar u$ in the interval $(t,T-\tau)$; integrating we have
$$
\into v^T(t)- \into v^T(T-\tau) + \int_t^{T-\tau} \into [H(Du^T)- H(D\bar u)]dxds = \int_t^{T-\tau} \into [F(m^T)- F(\bar m)]dxds
$$
which yields, using \rife{fK}, \rife{lip} and \rife{stima-exp},  
$$
\left|\into v^T(t)- \into v^T(T-\tau)\right| 
\leq c \int_t^{T-\tau} (e^{-\omega s} + e^{-\omega (T-s)} ) ds \leq c \, (e^{-\omega t} + e^{-\omega \tau} )\,.
$$
Similarly we integrate the equation of $v^{T'}-\bar u$ in the interval $(t,T'-\tau)$ and we get
$$
\left|\into v^{T'}(t)- \into v^{T'}(T'-\tau)\right| 
\leq  c  (e^{-\omega t} + e^{-\omega \tau} )\,.
$$
Putting the latter two inequalities together with \rife{cauchy}, we conclude that
\be\label{medie}
\begin{split}
\left|\into v^T(t)- \into v^{T'}(t)\right| &  \leq c \, (e^{-\omega t} + e^{-\omega \tau} )
+ c\,  \tau (e^{-\omega \tau} + e^{-\omega (T-\tau)} + e^{-\omega (T'-\tau)})\,.
\end{split}
\ee
Finally, we consider two possible limits $v_1, v_2$ obtained with different subsequences $v^{T_n}$ and $v^{T'_n}$; since both 
are solutions of \rife{MFG-infty}, we have that $v_1-v_2$ is a constant. However, passing to the limit as $T_n,T_n'\to \infty$ in \rife{medie}, and then letting $\tau \to \infty$, we obtain
$$
\left|\into v_1(t)- \into v_2(t)\right|    \leq c  \,e^{-\omega t} \,.
$$
Letting $t\to \infty$ implies that  $v_1-v_2$ can only be the null constant. This proves that the limit of $v^T$ is independent of the subsequence, hence the whole sequence $v^T$ converges. The convergence of $m^T$ follows itself from the uniqueness result of Lemma \ref{uni-lim}.

Finally, by integrating the equation of $v-\bar u$, we have
$$
\into v(t)- \into v(t') + \int_t^{t'} \into [H(Dv)- H(D\bar u)]dxds = \int_t^{t'} \into [F(\mu)- F(\bar m)]dxds\,.
$$
Since $v,\mu$ satisfy the estimate (a consequence of \rife{stima-exp})
$$
\|Dv(t)-D\bar u\|_\infty+ \|\mu(t)-\bar m\|_\infty \leq K \,e^{-\omega t}\,,
$$
we deduce that 
$$
\left |\into v(t)- \into v(t') \right| \leq   K \int_t^{t'} e^{-\omega s}ds \quad \mathop{\to}^{t,t'\to \infty} \,\,\,0
$$
hence $\into v(t)$ is a Cauchy sequence and admits a limit as $t\to \infty$. The value of this limit, say $\bar c$, fully characterizes the function $v$ due to Lemma \ref{uni-lim}.
\end{proof}

\vskip1em
\begin{rem}\label{Grema}
In the above result, we have assumed that the final cost $G$ is independent of $m$. The reason is that if $G$ is just a Lipschitz function of the density $m(T)$, then we are not able to show a  bound for $Du^T$ up to $t=T$. Otherwise, should we have a global bound for $Du$ in the whole $(0,T)$, then  the same conclusion would hold for $G$ satisfying \rife{gK} and $G(x,m)+ \gamma m$ monotone with $\gamma $ small. 

We stress, in particular, that the convergence result of Theorem \ref{convuT} remains true for smoothing couplings at final time, say for instance if  $G=G(x,m)$ is a Lipschitz continuous mapping from $\T\times \cP(\T)$ (endowed with the Wasserstein distance $d_1$) such that $\|G(x,m)\|_{W^{1,\infty}(\T)}$ is bounded uniformly in $\cP(\T)$ and 
$$
\into (G(x,m_1)- G(x,m_2)d(m_1-m_2) \geq - \gamma \,\|m_1- m_2\|_{\elle1}^2 \qquad \forall m_1, m_2 \in \cP(\T) \cap \elle1
$$
for $\gamma$ sufficiently small.
\end{rem}

\vskip1em

\begin{rem} It would be possible to characterize the limit of $u^T-\bar \lambda(T-t)$ in terms of a stationary feedback operator defined on the current measure $\mu(t)$, which is the unique solution of \rife{MFG-infty} (see Lemma \ref{uni-lim}). Namely, one can define an operator $\hat E$ such that
$$
\lim\limits_{T\to \infty} u^T(x,t)-\bar \lambda(T-t)= \hat E(\mu(t))\qquad \hbox{for any $t>0$.}
$$
In the present setting, $\hat E$ could only be defined as an unbounded operator in $(\elle2)_+$, with a  domain which includes the set of bounded functions. Nevertheless, $\hat E$ could still be characterized thanks to the well-posedness of the MFG system. We stress that  this kind of feedback (which plays a similar role as the Riccati stationary operator in other control problems) would coincide with a solution of the stationary ergodic  master equation introduced in \cite{CaPo} for monotone and smooth mean field game systems. This may suggest alternative ways to look at the master equation of mean field games whenever it cannot be defined as a  smooth function on the space of probability measures.
\end{rem}

\section{The discounted problem}

We now deduce the existence of a   solution to the infinite horizon MFG system. 
As before, we start by  assuming that $p\mapsto H(x,p)$ is a $C^2$  function which satisfies   \rife{Hk} and \rife{lip}, and that   $F(x,m)$  satisfies 
\rife{fK} and
\be\label{f3}
 (F(x,m)-F(x,m'))(m-m')\geq - \gamma (m-m')^2 \qquad \forall x \in \T \,, m,m' \in \R\, 
\ee
for some $\gamma>0$.

It is possible to prove that, under the above assumptions,  there exist  $\de_0, \gamma_0>0$ such that if $\gamma<\gamma_0$ and $\de<\de_0$ then the stationary problem 
\be\label{ergde}
\begin{cases}
  \de u -  \kappa  \Delta u + H(x,Du)= F(x,m) &  x\in \T
\\
  -  \kappa \Delta m- \dive(m\, H_p(x,Du)) = 0 &  x\in \T
\\
\into m = 1\,  
& \end{cases}
\ee
admits a unique solution $(\bar u_\de, \bar m_\de)$, which is smooth. Here $\gamma_0$ only depends on $\kappa, H,F$, i.e. it depends on the constant $L$ in \rife{lip} and on the constants $c_K$, $\ell_K$, $\alpha_K, \beta_K$ for a $K$ only depending on $\kappa, L$.  While the existence of $(\bar u_\de, \bar m_\de)$ can be proved with a usual fixed point method, the uniqueness  argument is  similar  as  the one we   used in Theorem \ref{uniq} and requires smallness of $\de$ as well.

\begin{thm}\label{discex} Let $m_0\in \elle\infty$. Assume that  $H$ satisfies  \rife{Hk} and \rife{lip}, $F$ satisfies \rife{fK} and \rife{f3}.  There exist $ \gamma_0, \delta_0>0$ such that  if \rife{f3} holds with $\gamma<\gamma_0$, and if $\de<\de_0$, then  there exists a unique solution to the infinite horizon problem
\be\label{ih}
\begin{cases}
-u_t + \de u -   \kappa \Delta u + H(x,Du)= F(x,m) &  t\in (0,\infty)
\\
m_t -  \kappa  \Delta m- \dive(m\, H_p(x,Du)) = 0 & t\in (0,\infty)
\\
m(0)= m_0\,,\qquad u\in L^\infty((0,\infty)\times \T)\,. 
& \end{cases}
\ee
In addition,   there exist  $ M,\omega>0$ such that 
\be\label{expde}
\| m(t)-  \bar   m_\de\|_\infty+ \| Du(t)-D  \bar u_\de\|_\infty \leq M\, e^{-\omega t} \|m_0-  \bar m_\de\|_\infty \qquad \forall t>0\,,\,\,\forall \de<\de_0. 
\ee
The constants $\gamma_0, \de_0$, as well as $M,\omega$, only depend  on $\kappa, \|m_0\|_\infty$, the constant $L$ in \rife{lip}  and the local growth constants of $F,H$ given by \rife{Hk}, \rife{fK} (for some $K$ depending on $\kappa, \|m_0\|_\infty, L$).
\end{thm}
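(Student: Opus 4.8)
The plan is to write \rife{ih} as a perturbation of the stationary discounted state $(\bar u_\de,\bar m_\de)$ and to re-run, with the extra zeroth-order term $\de u$, the a priori estimate and fixed-point scheme of Section~4, first on a finite slab $(0,T)$ and then passing to the limit $T\to\infty$. I would set $u=\bar u_\de+v$, $m=\bar m_\de+\mu$, so that the system takes the form \rife{sys} of a quasilinear problem for $(v,\mu)$ — with $h(x,p)=H(x,D\bar u_\de+p)-H(x,D\bar u_\de)$, $f(x,s)=F(x,\bar m_\de+s)-F(x,\bar m_\de)$, $B(x,p)=\bar m_\de[H_p(x,D\bar u_\de+p)-H_p(x,D\bar u_\de)]$ — with $\mu(0)=m_0-\bar m_\de$ and the $v$-equation carrying the additional term $\de v$. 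Since $\|H_p\|_\infty\le L$ one has $\into\mu(t)\equiv0$, hence $\|\mu(t)\|_\infty$ is bounded uniformly in $t$ and in $\de<\de_0$ by a constant depending only on $\kappa,L,\|m_0\|_\infty$; this makes $f(x,\mu)$ bounded, and then Lemma~\ref{lem72} together with interior parabolic regularity give $\|Dv(t)\|_\infty\le K$ uniformly. This fixes, once and for all and uniformly in $\de$, the compact set of gradients on which the local uniform convexity of $H$ is used, so that $h,f,B$ satisfy \rife{hT}--\rife{BT} and \rife{summa} (with $\sigma=1$, using $\mu\ge-\bar m_\de$) with constants independent of $\de$.

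The core is the discounted analogue of Lemma~\ref{apriori}. Testing the $v$-equation by $\mu$ and the $\mu$-equation by $v$, and using convexity of $H$ around $D\bar u_\de$, in place of \rife{ddtT} one obtains
$$-\frac{d}{dt}\into\mu v+\de\into\mu v\ \ge\ c_0\into|Dv|^2-\gamma\into\mu^2 .$$
The key point is $\into\mu(t)=0$, so $\into\mu v=\into\mu\tilde v$ and the discount term is controlled by $\de(\|\mu\|_2^2+\|\tilde v\|_2^2)$; bounding $\int_0^T\into\mu^2$ by $\int_0^T\into|Dv|^2$ through Lemma~\ref{stimaRhoF}, and $\int_0^T\|\tilde v(t)\|_2^2\,dt$ by $\int_0^T\into|Dv|^2$ through Poincar\'e and Lemma~\ref{lem72}, one absorbs both the $\gamma$- and the $\de$-contributions as soon as $\gamma<\gamma_0$, $\de<\de_0$, with $\gamma_0,\de_0$ depending only on $\kappa,L,\|m_0\|_\infty$ and the local data of $F,H$. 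Run on $(0,T)$, exactly as in the proof of Lemma~\ref{apriori}, this yields $\|\mu(t)\|_2+\|\tilde v(t)\|_2\le c(\|\mu_0\|_2+\|\tilde v_T\|_2)(e^{-\omega t}+e^{-\omega(T-t)})$ together with $\int_0^T\into(|Dv|^2+\mu^2)\le c(\|\mu_0\|_2^2+\|\tilde v_T\|_2^2)$, with $\omega,c$ independent of $\de<\de_0$.

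For existence I would solve the discounted MFG system on $(0,T)$ with terminal datum $u(T)=\bar u_\de$ (so $\tilde v_T\equiv0$) by Schaefer's fixed point theorem, exactly as in the proof of Theorem~\ref{longtime} but with the discount term handled as above, obtaining $(u^T,m^T)$ with $\|m^T(t)-\bar m_\de\|_2+\|\widetilde{u^T(t)-\bar u_\de}\|_2\le c\,\|m_0-\bar m_\de\|_2\,e^{-\omega t}$ uniformly in $T$; interior parabolic regularity upgrades this to $\|m^T(t)-\bar m_\de\|_\infty+\|Du^T(t)-D\bar u_\de\|_\infty\le M e^{-\omega t}\|m_0-\bar m_\de\|_\infty$, and passing to the limit $T\to\infty$ (local-in-time compactness) produces a solution $(u,m)$ of \rife{ih} satisfying \rife{expde} (one may also construct it by a direct fixed point on the half-line in the weighted space $C^0([0,\infty);L^2_0(\T))$). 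For uniqueness I would first observe that \emph{any} bounded solution $(u,m)$ of \rife{ih}, restricted to a slab $(0,T)$, solves the discounted MFG with data $(m_0,u(T))$, where $\|u(T)\|_{W^{1,\infty}}\le C$ uniformly in $T$ by interior parabolic regularity, and is its unique solution by the discounted version of Theorem~\ref{uniq}; the $(0,T)$ integral estimate above then gives $\int_0^T\into|D(u-\bar u_\de)|^2\le C$ uniformly in $T$, so $D(u-\bar u_\de)\in L^2((0,\infty)\times\T)$, and letting $T\to\infty$ in the $(0,T)$-turnpike bound shows $(u,m)$ itself satisfies \rife{expde}. Two such solutions $(u_1,m_1),(u_2,m_2)$ are then compared by the duality argument of Lemma~\ref{uni-lim} adapted with the discount: with the cut-off $\xi_R$ (the boundary term at $t=0$ vanishing since $(m_1-m_2)(0)=0$, the one at $t=\infty$ since $D(u_1-u_2)\in L^2$), the bound $\de\into(\mu_1-\mu_2)(u_1-u_2)\le c\,\de\,\|\mu_1-\mu_2\|_2\|\widetilde{u_1-u_2}\|_2$ absorbed for $\de$ small, and Lemma~\ref{stimaRhoF}, one gets $\int_0^\infty\into m_2|D(u_1-u_2)|^2=0$; hence $Du_1=Du_2$, then $m_1=m_2$ from the Fokker--Planck equation, and — this time \emph{without} the additive-constant ambiguity of Lemma~\ref{uni-lim}, since $u_1-u_2$ is a bounded solution of $-\partial_t(u_1-u_2)+\de(u_1-u_2)-\kappa\Delta(u_1-u_2)=0$ — also $u_1=u_2$.

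I expect the main difficulty to be quantitative rather than structural: one must absorb the discount term $\de\into\mu v$ and the anti-monotonicity term $\gamma\into\mu^2$ \emph{simultaneously and uniformly in} $\de<\de_0$, which forces careful use of the zero-average identity $\into\mu=0$, of Poincar\'e's inequality and of Lemma~\ref{stimaRhoF}, and a check that neither $\gamma_0,\de_0$ nor the exponential rate $\omega$ produced by the time-window iteration degenerates as $\de\to0$ — a point that is indispensable for the vanishing-discount analysis of the following section.
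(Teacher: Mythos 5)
Your strategy is structurally different from the paper's: you truncate to a finite horizon $(0,T)$ with terminal datum $\bar u_\de$, prove the turnpike estimate with the extra $\de$-term there, construct $(u^T,m^T)$ by Schaefer, and send $T\to\infty$, whereas the paper works directly on $(0,\infty)$: it introduces an auxiliary damping parameter $\vep$ in the map $\Phi_\vep$ (so that compactness in $L^\infty((0,\infty);L^2_0)$ is available), proves a discounted a priori estimate (Lemma~\ref{apriori-dis}) \emph{on the half-line} by multiplying the duality identity through by $e^{-\de t}$, and then lets $\vep\to0$. The $e^{-\de t}$ weight is not a cosmetic choice: it incorporates the discount term $\de\into\mu v$ into the total derivative $-\frac{d}{dt}\bigl[e^{-\de t}\into\mu v\bigr]$, so that \emph{no} $\de$-error term ever appears, and the $\sigma$-homogeneity of the inequalities needed for Schaefer is preserved exactly as in the undiscounted Lemma~\ref{apriori}.

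This is where your sketch has a genuine gap. You propose to absorb $\de\int_0^T\into\mu\tilde v\le \frac{\de}{2}\int_0^T\|\mu\|_2^2+\frac{\de}{2}\int_0^T\|\tilde v\|_2^2$, and to bound $\int_0^T\|\tilde v\|_2^2$ by $\int_0^T\into|Dv|^2$ \emph{via Poincar\'e}. But the left-hand side that must absorb this is $\sigma c_0\int_0^T\into|Dv|^2$ (you dropped the $\sigma$ in your displayed inequality, but it is there, from the structure condition \rife{summa2}). Poincar\'e gives $\int\|\tilde v\|_2^2\le C_P^2\int\into|Dv|^2$ with \emph{no} factor of $\sigma$; absorbing $\frac{\de C_P^2}{2}\int\into|Dv|^2$ against $\sigma c_0\int\into|Dv|^2$ would force $\de\lesssim\sigma$, which is not uniform in $\sigma\in[0,1]$ and therefore breaks the Schaefer a priori estimate exactly at the small-$\sigma$ end. (Contrast with the $\gamma$-term: Lemma~\ref{stimaRhoF} gives $\int\|\mu\|_2^2\le C\|\mu_0\|_2^2+C\sigma^2C_2^2\int\into|Dv|^2$, and the extra $\sigma^2$ makes the absorption $\sigma$-uniform.) The fix is to \emph{not} use Poincar\'e for $\tilde v$: one must instead represent $\tilde v$ via Lemma~\ref{lem72}(i) in terms of $f=f(\cdot,\mu)$, so that $\|\tilde v(t)\|_2\lesssim\int_t^Te^{-\nu(s-t)}\|\mu(s)\|_2\,ds$ (the terminal datum is $0$ in your construction), and then apply Lemma~\ref{stimaRhoF} to $\mu$ to recover the crucial $\sigma^2$ factor. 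You mention Lemma~\ref{lem72} alongside Poincar\'e, but the route through Poincar\'e, which is the one you write out, does not close; the $e^{-\de t}$-weight argument of the paper avoids the issue entirely. The same $\sigma$-bookkeeping will have to be repeated in the time-window iteration (your claim ``exactly as in Lemma~\ref{apriori}''), where the paper pays a controlled price $e^{3\de\tau_0}$ for undoing the weight, which is precisely where $\de_0$ is fixed.

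The rest of your scheme is sound and compatible with the paper: the reduction $u=\bar u_\de+v$, $m=\bar m_\de+\mu$, the uniform $L^\infty$ bounds coming from \rife{lip}, the uniqueness via duality with the cut-off $\xi_R$ of Lemma~\ref{uni-lim} (where $\sigma=1$, so the absorption issue does not arise, and the $\de$-term then eliminates the additive-constant indeterminacy), and the remark that $D(u_1-u_2)\in L^2((0,\infty)\times\T)$ is exactly the integrability needed to kill the boundary terms. If you replace the Poincar\'e step by the Lemma~\ref{lem72}/Lemma~\ref{stimaRhoF} chain, your $(0,T)$-truncation route should deliver the theorem, and is a legitimate alternative to the paper's half-line construction with $\vep$-damping.
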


\vskip1em

\begin{rem}   We stress that the smallness condition required on $\gamma$ in Theorem \ref{discex} does not depend on the parameter $\de$. Indeed, the purpose of this result is to provide an exponential decay in time which is uniform for $\de$ sufficiently small, in order to apply it to the vanishing discount limit studied in the next Section. 
Such a uniform decay rate is obtained in the next Lemma, where we choose $\de\leq \de_0$ in the final step; no effort is made here to quantify $\de_0$ (in fact a  possible study for arbitrary large $\de$ would even be possible but is beyond our goals here).
\end{rem}

\vskip1em
The proof of Theorem \ref{discex} will follow from a  fixed point argument similarly as in Theorem \ref{longtime}.  The crucial step consists in obtaining  a priori estimates on the system:
 \be\label{lin-dis}
 \begin{cases}
-v_t + \de v- \kappa \Delta v + h(x, Dv)  = f(t,x,  \mu)   & 
\\
\mu_t - \kappa \Delta \mu- \dive(\mu\, h_p(x, Dv)) = \sigma \, \dive(B(x,Dv)) & 
\\
\mu(0)= \sigma \mu_0\,,\,\,\,\, \mu,v\in L^\infty((0,\infty)\times \T)\,
& \end{cases}
\ee
where $\sigma\in [0,1]$, $\mu_0\in \elle\infty$,  with $\into \mu_0=0$, and where the functions $h(x,p)$, $f(t,x, s)$ and $B(x,p)$ satisfy the conditions \rife{hT}-\rife{BT}, which we rewrite below for the reader's convenience.

\begin{lem}\label{apriori-dis}
Let $h(x,p)$, $f(t,x, s)$ and $B(x,p)$ be   continuous functions satisfying  the conditions
 \be\label{h}
 h(x,0)=0\,,\quad |h_p(x,p)| \leq \ell_0\,,
 \ee
 \be\label{f}
 f(t,x,s)s\geq -\gamma \,s^2\,,\quad |f(t,x,s)|\leq C_1 \, |s|
 \ee
\be\label{B}
B(x,p)\cdot p \geq C_2^{-1} |p|^2\,, \qquad |B(x,p)|\leq C_2\, |p| \,.
 \ee
  for every 
$s\in \R$, $t>0$, $x\in \T$, $p\in \R^d$ such that $|p| \leq K$.  
For $\sigma\in [0,1]$, $\mu_0\in \elle\infty$,  with $\into \mu_0=0$, let $(\mu,v)$ be a solution of \rife{lin-dis} which satisfies, for any $(t,x)\in Q_T$,  that $|Dv(t,x)|\leq K$ and 
\be\label{summa2}
\begin{split}
\sigma B(x,p)\cdot p -   \mu(t,x) (h(x,p)-h_p(x,p)\cdot p)      \geq \sigma\,c_0\,  |p|^2\qquad \forall (t,x)\in Q_T, \forall p\,:|p|\leq K\,, 
\end{split}
\ee
for some $c_0>0$. 
 
 Then there exist constants $\gamma_0, \de_0, \omega, M>0$ (only depending on  $\kappa, \ell_0, C_1, C_2, c_0$)  such that, if $\gamma\leq \gamma_0$ ($\gamma$ is in \rife{f}),  if $\de\leq \de_0$ and $\sigma\in [0,1]$,   then  $(\mu,v)$   satisfies
 $$
 \| \mu(t)\|_{2}+ \| Dv(t)\|_2 \leq M  e^{-\omega t} \,\|\mu_0\|_2  \qquad \forall t>0\,.
 $$
\end{lem}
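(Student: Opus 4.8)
The plan is to mimic the proof of Lemma~\ref{apriori}, replacing the finite horizon $(0,T)$ by $(0,\infty)$ — so that the ``terminal datum'' disappears and its role is played by the standing bound $v\in L^\infty((0,\infty)\times\T)$ — and taking care of the new discount term $\delta v$. First I would dispose of the degenerate case $\sigma=0$: then $\mu(0)=0$ and the Fokker--Planck equation for $\mu$ has no source, hence $\mu\equiv0$, so $f(t,x,\mu)\equiv0$ by \rife{f} and Lemma~\ref{lem72} forces $\tilde v\equiv0$, $Dv\equiv0$, and the estimate is trivial. From here on $\sigma\in(0,1]$.

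\emph{Step 1: a weighted energy bound.} Testing the $v$--equation against $\mu$ and the $\mu$--equation against $v$, and using \rife{summa2} together with $f(t,x,\mu)\mu\ge-\gamma\mu^2$ from \rife{f}, exactly as in \rife{ddtT} one gets
\[
-\frac{d}{dt}\into\mu v+\delta\into\mu v\ \ge\ \sigma\,c_0\into|Dv|^2-\gamma\into\mu^2 .
\]
I would then multiply by the integrating factor $e^{-\delta t}$ and integrate over $(0,\infty)$: the boundary term at $+\infty$ vanishes because $\mu,v\in L^\infty$ and $\delta>0$, and at $t=0$ one is left with $\sigma\into\mu_0\,\tilde v(0)$ (recall $\into\mu_0=0$), so
\[
\sigma\,c_0\int_0^\infty e^{-\delta t}\into|Dv|^2\ \le\ \sigma\,\|\mu_0\|_2\,\|\tilde v(0)\|_2+\gamma\int_0^\infty e^{-\delta t}\into\mu^2 .
\]
Next, applying the discounted Fokker--Planck estimate of Lemma~\ref{stimaRhoF} to $\mu$ (whose source is $\sigma\,\dive B(x,Dv)$ with $|B|\le C_2|Dv|$ by \rife{B}) one bounds $\int_0^\infty e^{-\delta t}\into\mu^2$ by $C\sigma^2\big(\|\mu_0\|_2^2+C_2^2\int_0^\infty e^{-\delta t}\into|Dv|^2\big)$; since $\sigma^2\le\sigma$, if $\gamma$ is small (in terms of $\kappa,\ell_0,C_2,c_0$ only) the gradient term is absorbed, and after estimating $\|\tilde v(0)\|_2$ by Lemma~\ref{lem72} and $|f|\le C_1|\mu|$, one obtains
\[
\sigma\int_0^\infty e^{-\delta t}\into|Dv|^2+\int_0^\infty e^{-\delta t}\into\mu^2\ \le\ c\,\|\mu_0\|_2^2 ,
\]
with $c$ depending only on $\kappa,\ell_0,C_1,C_2,c_0$ and, crucially, not on $\delta$ nor $\sigma$. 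By time--translation invariance of \rife{lin-dis}, the same bound holds on $(s,\infty)$ with $\|\mu_0\|_2$ replaced by $\|\mu(s)\|_2$, for every $s\ge0$.

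\emph{Step 2: from the energy bound to exponential decay.} On any interval $[s,s+\tau]$ with $\delta\tau\le1$, the (non--discounted) Lemma~\ref{stimaRhoF} gives $\sup_{[s,s+\tau]}\|\mu\|_2^2\le C\big(\|\mu(s)\|_2^2+\sigma^2C_2^2\int_s^{s+\tau}\into|Dv|^2\big)$, and $\int_s^{s+\tau}\into|Dv|^2\le e^{\delta\tau}\int_s^\infty e^{-\delta(t-s)}\into|Dv|^2\le e\,c\,\sigma^{-1}\|\mu(s)\|_2^2$ by Step~1; since $\sigma\le1$ this yields $\sup_{[s,s+\tau]}\|\mu\|_2\le c_1\|\mu(s)\|_2$ with $c_1=c_1(\kappa,\ell_0,C_1,C_2,c_0)$. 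On the other hand, Step~1 provides $\xi_\tau\in[0,\tau/2]$ with $e^{-\delta\xi_\tau}\|\mu(\xi_\tau)\|_2^2\le\tfrac{2c}{\tau}\|\mu_0\|_2^2$, hence $\|\mu(\xi_\tau)\|_2\le c_2\tau^{-1/2}\|\mu_0\|_2$ once $\delta\le1/\tau$. Combining these on $[\xi_\tau,\xi_\tau+\tau]$ gives $\sup_{[\xi_\tau,\xi_\tau+\tau]}\|\mu\|_2\le c_1c_2\tau^{-1/2}\|\mu_0\|_2\le\tfrac12\|\mu_0\|_2$ once $\tau$ is fixed large enough (depending only on $\kappa,\ell_0,C_1,C_2,c_0$); in particular $\|\mu(\tau)\|_2\le\tfrac12\|\mu_0\|_2$. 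Iterating from the base points $k\tau$ yields $\|\mu(k\tau)\|_2\le2^{-k}\|\mu_0\|_2$ and, via the interval bound on $[k\tau,(k+1)\tau]$, $\|\mu(t)\|_2\le Me^{-\omega t}\|\mu_0\|_2$ with $\omega=(\ln2)/\tau$ and $\delta_0:=1/\tau$. Finally, Lemma~\ref{lem72} once more with this decay of $\|\mu(\cdot)\|_2$ (shrinking $\omega<\nu$) gives $\|\tilde v(t)\|_2\le c\,e^{-\omega t}\|\mu_0\|_2$, and local maximal parabolic regularity for the $v$--equation on $[t,t+1]$ upgrades this to $\|Dv(t)\|_2\le c\,e^{-\omega t}\|\mu_0\|_2$, which is the claim.

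\emph{Expected main obstacle.} The delicate point is Step~1: making the duality/energy estimate close on the whole half--line, with no terminal datum and with the discount present, while keeping all constants independent of $\sigma\in[0,1]$ and of $\delta\le\delta_0$. The integrating factor $e^{-\delta t}$ is the device that reconciles the discount with the infinite horizon — it removes the contribution at $+\infty$ thanks to the $L^\infty$ bounds on $\mu,v$ and prevents the term $\delta\into\mu v$ from spoiling the absorption — but it commits one to the discounted versions of Lemmas~\ref{stimaRhoF} and~\ref{lem72}, and it is the reason why $\delta_0$ can only be fixed at the end, tied to the iteration scale $\tau$.
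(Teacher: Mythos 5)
Your proposal is correct and follows the same two-step architecture as the paper's proof: first a weighted-in-time energy bound (duality identity with integrating factor $e^{-\delta t}$, closed by the discounted Fokker--Planck estimate \eqref{77} and the discounted version of Lemma~\ref{lem72}), then an iteration producing a fixed fractional decay on a time scale $\tau$ independent of $\delta\le\delta_0$. Your Step~1 is essentially identical to the paper's (which writes the same thing as $-\frac{d}{dt}[e^{-\delta t}\int\mu v]\ge\dots$ and arrives at \eqref{muinf}), including the $\sigma^2\le\sigma$ trick and the need for $\gamma$ small to absorb $\gamma\int\mu^2$. Your Step~2 is packaged differently: the paper does not invoke time-translation, but instead picks two good times $\xi_\tau,\eta_\tau$ via the mean value argument and re-runs the duality identity directly on $(\xi_\tau,\eta_\tau)$, estimating the boundary terms through the intermediate bound \eqref{step1} $\|\tilde v(t)\|_2\le c\|\mu_0\|_2 e^{\delta t/2}$; you instead appeal to the fact that the Step~1 argument, re-run with initial time $s$, gives $\sigma\int_s^\infty e^{-\delta(t-s)}\int|Dv|^2+\int_s^\infty e^{-\delta(t-s)}\int\mu^2\le c\|\mu(s)\|_2^2$, and then combine this with the mean-value choice of $\xi_\tau$ and the $\sup$-bound from \eqref{76}. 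Both routes work and give constants with the same dependencies. One caveat worth spelling out in your Step~2: the phrase ``by time-translation invariance, the same bound holds with $\|\mu_0\|_2$ replaced by $\|\mu(s)\|_2$'' cannot be taken as a black-box substitution, because in the translated problem the initial datum is $\mu(s)$ itself, not $\sigma$ times something; a literal substitution $\mu_0\mapsto\mu(s)/\sigma$ would introduce a spurious $\sigma^{-2}$. What you actually use — and what is correct — is the re-derivation of Step~1 starting at time $s$, where the duality boundary term is $\int\mu(s)\tilde v(s)$ (no extra $\sigma$), the FP estimate starts from $\|\mu(s)\|_2$, and the absorption gives $\sigma A_s+B_s\le c\|\mu(s)\|_2^2$, i.e.\ exactly the $A_s\le c\sigma^{-1}\|\mu(s)\|_2^2$ you write. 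So the formulae in your Step~2 are right, but this re-derivation should be stated rather than invoked as a translation invariance, since $\sigma$ is not time-translation covariant in the scaling of the data.
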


\proof  First  we observe that conditions \rife{f}, \rife{B} imply, respectively, $f(x,0)=0$ and $B(x,0)=0$. The proof of the estimate is divided in two steps.
\vskip0.5em
{\it Step 1} In this first step, we show that there exists $c$   such that 
\be\label{step1}
\| \mu(t)\|_{2}+ \| Dv(t)\|_2 \leq c\, \|\mu_0\|_2 \, e^{\frac \de2 t}\,,
\ee
and
\be\label{muinf}
\int_0^\infty \into   e^{-\de s}\mu^2(s)\,ds  + \sigma \int_0^{\infty}  \into e^{-\de s}    |Dv|^2 \leq c\,   \|\mu_0\|_2^2\,.
\ee
To show the above two estimates, we observe that the duality between the two equations implies
\be\label{mon}
\begin{split}-\frac{d}{dt} \, \left[e^{-\de t}\, \into \mu\, v\right] & =  e^{-\de t}\left\{\into f(x,\mu)\mu +\sigma  \into B(x,Dv)Dv+ \into \mu(h_p(x,Dv)\cdot Dv-H(xDv))\right\} 
\\
& \geq -\gamma e^{-\de t} \into |\mu(t)|^2 +\sigma \, c_0 \, e^{-\de t} \into |Dv|^2
\end{split}
\ee
where we used \rife{f} and \rife{summa2}.  Hence  we get
$$
c_0\sigma \int_0^t  \into e^{-\de s}    |Dv|^2\leq  \gamma \int_0^t  \into e^{-\de s} \mu^2 + \into \mu_0 v(0) - e^{-\de t} \into  \mu(t)v(t)\,.
$$
Now we use \rife{77} from  Lemma \ref{stimaRhoF}; since $|B(x,Dv)|\leq C_2 \,|Dv|$  we deduce
\be\label{mud0}
\int_{t_0}^t  \into   e^{-\de s}\mu^2  \leq C\, e^{-\de t_0}\,  \|\mu(t_0)\|_2^2 + C\, \sigma^2\, C_2^2  \int_{t_0}^t \into e^{-\de s}\, |Dv|^2 \,,\quad \qquad \forall t>t_0\geq 0
\ee
for some constant $C$ independent of $\de$. Hence we get, using $\sigma\leq 1$:
$$
(c_0 - \gamma\, C\, C_2^2)\sigma \int_0^t  \into e^{-\de s}    |Dv|^2\leq  C\,\gamma   \|\mu_0\|_2^2+  \into \mu_0 v(0) - e^{-\de t} \into  \mu(t)v(t)\,.
$$
Since $\mu,v$ are globally bounded, we have that last term vanishes as $t\to \infty$. We deduce that, for $\gamma$ so that $c_0 - \gamma\, C\, C_2^2>0$, we have 
\be\label{sdv}
\sigma\int_0^{\infty}  \into e^{-\de s}    |Dv|^2\leq  c\,  [  \|\mu_0\|_2^2+ \| \mu_0 \|_2 \|\tilde v(0)\|_2] 
\ee
where, we recall, $\tilde v= v-\lg v\rg$.  Here and after we denote by  $c$ possibly different constants which only depend on $\kappa, \ell_0, C_1, C_2, c_0$.  

By using Lemma \ref{lem72} for the equation of $v$ (with horizon $T\to\infty$) we have
\be\label{tilv}
\begin{split}
\|\tilde v(t)\|_2 e^{-\de t}  & \leq C \,  \int_t^\infty e^{-\nu (s-t)}e^{-\de s}\|f(s, \cdot, \mu(s))\|_2ds
\\
& \leq C\, C_1   \int_t^\infty e^{-\nu (s-t)}e^{-\de s} \|\mu(s)\|_2ds
\end{split}
\ee
where $C$ only depends on $\kappa, \ell_0$ and we also used \rife{f}. If we take here $t=0$ and we estimate the right-hand side with   \rife{mud0}, then we obtain an estimate for $\|\tilde v(0)\|_2$, which can be used in  \rife{sdv} to deduce that 
$$
\sigma \int_0^{\infty}  \into e^{-\de s}    |Dv|^2\leq  c\,   \|\mu_0\|_2^2\,.
$$
In turn again from \rife{mud0} this concludes the proof of \rife{muinf}. 
Now, since  we have, from Lemma \ref{stimaRhoF},
\be\label{mud}
\|\mu(t)\|_2^2 \leq C\,  e^{-\nu t} \|\mu_0\|_2^2 + C\, \sigma^2 \, C_2^2 \, e^{\de t} \int_0^t \into e^{-\de s}\, |Dv|^2
\ee
we also deduce from \rife{muinf} that
$$
\|\mu(t)\|_2 \leq c \|\mu_0\|_2 \,e^{\frac \de2 t} \qquad \forall t>0 
$$
and in turn now \rife{tilv} implies as well:
$$
\|\tilde v(t)\|_2\leq c \|\mu_0\|_2 \,e^{\frac \de2 t} \qquad \forall t>0 \,.
$$
From Lemma \ref{lem72}, the above two estimates imply a similar one for $\|Dv(t)\|_2$, so \rife{step1} is proved.
\vskip0.4em
{\it Step 2.}  We first deduce from \rife{muinf} that there exist $\xi_\tau\in (0,\tau), \eta_\tau\in (2\tau,3\tau)$ such that
\be\label{muxitaubis}
e^{-\de \xi_\tau}\|\mu(\xi_\tau)\|_2^2 \leq  \frac c{\tau} \|\mu_0\|_2^2\,,\quad e^{-\de \eta_\tau}\|\mu(\eta_\tau)\|_2^2 \leq  \frac c{\tau} \|\mu_0\|_2^2\,.
\ee
Observe that, from \rife{mon}, we have
\begin{align*}
\sigma\, c_0 \int_{\xi_\tau}^{\eta_\tau} e^{-\de s}\into |Dv|^2&  \leq  \gamma \int_{\xi_\tau}^{\eta_\tau} e^{-\de s} \into |\mu|^2 
+ e^{-\de \xi_\tau} \into \mu(\xi_\tau)v(\xi_\tau) - e^{-\de \eta_\tau} \into \mu(\eta_\tau) v(\eta_\tau)
\\
& \leq \gamma \int_{\xi_\tau}^{\eta_\tau} e^{-\de s} \into |\mu|^2 + \frac c{\sqrt \tau}\, \|\mu_0\|_2^2
\end{align*}
where we used the global bound for $\tilde v(t)$ and \rife{muxitaubis}. Using now \rife{mud0} to estimate last integral in the right-side we get
$$
\sigma\, c_0 \int_{\xi_\tau}^{\eta_\tau} e^{-\de s}\into |Dv|^2  \leq C\,\gamma \,  e^{-\de \xi_\tau} \|\mu(\xi_\tau)\|_2^2 + \gamma C\, \sigma^2\, C_2^2  \int_{\xi_\tau}^{\eta_\tau} e^{-\de s}\into |Dv|^2 + \frac c{\sqrt \tau}\, \|\mu_0\|_2^2\,.
$$
We use \rife{muxitaubis} and we take  $\gamma$ sufficiently small (independent of $\de$) so we conclude that 
$$
  \int_{\xi_\tau}^{\eta_\tau} e^{-\de s}\into |Dv|^2  \leq  \left(\frac c\tau+\frac c{\sqrt \tau}\right)\, \|\mu_0\|_2^2\,.
$$
Therefore, for every $t\in [\tau, 2\tau]$ we estimate $\mu$ as
\begin{align*}
\|\mu(t)\|_2^2 & \leq  C e^{-\nu(t-\xi_\tau)}\|\mu(\xi_\tau)\|_2^2 + C \, C_2^2 \sigma^2 \int_{\xi_\tau}^{\eta_\tau} \into |Dv|^2
\\
& \leq C e^{-\nu(t-\xi_\tau)}e^{\de \xi_\tau} \frac{ c\|\mu_0\|_2^2}{\tau}+ C \, C_2^2 \sigma^2 e^{\de \eta_\tau} \int_{\xi_\tau}^{\eta_\tau}e^{-\de s} \into |Dv|^2  \,.
\end{align*}
Using  the estimate for the last integral and the fact that  $\eta_\tau \leq 3\tau$, we conclude that
$$
\|\mu(t)\|_2^2 \leq   e^{3\de\tau}  \left(\frac c\tau+\frac c{\sqrt \tau}\right)\, \|\mu_0\|_2^2\,.
$$
In particular, there exist $\tau_0$ (and correspondingly, $\de_0$) such that
$$
\|\mu(t)\|_2  \leq \frac 12 \|\mu_0\|_2  \qquad \forall t\in [ \tau_0, 2\tau_0], \quad \forall \de\leq \de_0\,.
$$
Iterating this estimate we conclude that there exists $\omega>0$ such that 
$$
\|\mu(t)\|_2  \leq e^{-\omega t} \|\mu_0\|_2\,,\quad \forall t>0\,,\quad \forall \de\leq \de_0\,.
$$
%
Finally, using \rife{tilv} we deduce a similar exponential decay for $\|\tilde v(t)\|_2$, and then for $\|Dv(t)\|_2$ as well.
\qed

\vskip1em

 We are now ready to prove Theorem \ref{discex}
 \vskip1em
 
{\bf Proof of Theorem \ref{discex}.}   Let us set $X= L^\infty((0,\infty); L^2_0(\T))$ where, we recall, $L^2_0(\T)$ denotes $L^2$ functions with zero average. 

We define the operator $\Phi_\vep$ on $X$ as follows: given $\mu\in X$, let $(v,\rho)$ be the solution to the system
\be\label{sysl}
\begin{cases}
-v_t +\de v - \kappa\Delta v + H(x,D  \bar u_\de + Dv) - H(x,D  \bar u_\de)= [F(x,  \bar m_\de + \mu) - F(x,  \bar m_\de)]e^{-\vep t} & 
\\
\rho_t - \kappa\Delta \rho- \dive(\rho\, H_p(x,D  \bar u_\de + Dv)) =  \dive(  \bar m_\de \left[ H_p(x,D \bar u_\de + Dv)- H_p(x, D  \bar u_\de)\right]) & 
\\
\rho(0)= m_0- \bar m_\de
& \end{cases}
\ee 
then we set $\rho= \Phi_\vep \mu$.  Here  $\vep>0$ is a parameter used in a  first step for compactness issues. 
We notice that  $\mu$ is a  fixed point of $\Phi_0$ if and only if $(v+\bar u_\de, \mu+\bar m_\de)$ solves the MFG system \rife{ih}.

In fact, if $m:=\rho+ \bar m_\de$,    then $m$ solves the evolution equation
$$
m_t- \Delta m - \dive(m H_p(x, D\bar u_\de+ Dv))=0\,.
$$
Since $H_p$ is bounded by $L$, and since $m_0\in \elle\infty$, $\into m(t)=1$ for every $t$, there exists a constant $M_0$ such that $\|m(t)\|_\infty\leq M_0$ for every $t>0$. The same applies to $\bar m_\de$. We deduce that the range of $\Phi_\vep$ is contained in a uniform ball in $L^\infty((0,\infty)\times \T)$. Hence, there is no loss of generality in restricting the domain of $\Phi_\vep$ to functions which are uniformly bounded (it would be enough to replace $\mu$ with a suitable truncation in the definition of the operator); in particular, due to \rife{fK}, the function $F$ can be treated as uniformly Lipschitz in the $\mu$-variable.

We also observe that the uniform bound of $\mu$ implies a  uniform bound for $Dv$. Indeed, one can first proceed as in  \rife{tilv}  to deduce that $\|\tilde v(t)\|_2$ is uniformly bounded, then by Lemma \ref{lem72} and the regularizing effect of the equation it follows that 
\be\label{dvk} 
\|Dv(t)\|_\infty \leq K\qquad  \forall t>0
 \ee
 for some constant $K>0$.
We now show the following  properties of $\Phi_\vep$:

(i) $\Phi_\vep$ is continuous and compact. 

To show this fact, let $\mu_n$ be bounded in $X$. By decay properties of the equation of $v$, and by Lipschitz continuity of $F$,
we have
$$
e^{-\de t} \| D v_n(t)\|_2 \leq c \int_t^{\infty} e^{-\nu(s-t)} e^{-\de s} \| \mu_n(s) e^{-\vep s}\|_2 ds
$$
which implies
$$
\| D v_n (t)\|_2 \leq c\, e^{-\vep t}   \,.
$$
Since
$$
\| \rho(t)\|_2^2 \leq c e^{-2\nu(t-t_0)} \| \rho(t_0)\|_2^2+  c \int_{t_0}^t \|Dv(s)\|^2ds 
$$
by choosing $t_0= \frac t2$ we get
$$
\| \rho_n(t)\|_2  \leq c e^{-\nu \frac t2}  +  c\,   e^{-\vep \frac t2} \,.
$$
In particular, $\rho_n(t)$ is uniformly small in $\elle 2$ for $t$ large. Since $\rho_n(t)$ is (locally in time) relatively compact for the uniform topology,  we deduce that it is compact in $L^\infty((0,\infty); L^2(\T))$.
The continuity is easily proved in a  similar way.

(ii) There exists $M>0$ such that, for any $\sigma\in [0,1]$, any solution of  $\mu= \sigma \Phi_\vep \mu$ satisfies the estimate $\|\mu\|\leq M$. This is consequence of  Lemma \ref{apriori-dis}; indeed, if $\mu= \sigma \Phi_\vep (\mu)$, then $(\mu, v)$ is a solution of \rife{lin-dis} with $h(x,p):= H(x,D  \bar u_\de(x) + p) - H(x,D \bar u_\de(x))
$, $f(x,\mu):= F(x,  \bar m_\de(x) + \mu) - F(x,  \bar m_\de(x))$ and $B(x,p):=   \bar m_\de(x) \left[ H_p(x,D \bar u_\de(x) + p)- H_p(x, D  \bar u_\de(x))\right]$. Using the global bound for $\mu$ and \rife{dvk}, and due to \rife{fK}, \rife{Hk}, the conditions   \rife{h}-\rife{B} are satisfied. Moreover, we have $\mu\geq -\sigma \bar m_\de$, which implies that \rife{summa2} holds true. Applying Lemma \ref{apriori-dis} we deduce that 
\be\label{dallemma}
\|\mu(t)\|_2\leq M\, e^{-\omega t} \|\mu_0\|_2 \quad \forall t>0\,.
\ee
In particular, $\|\mu(t)\|_2$ is uniformly bounded. 

After (i) and (ii), we can apply Schaefer's fixed point theorem (\cite[Thm 11.3]{GT}) to conclude with the existence of a fixed point $\mu^\vep$, depending on $\vep$. However, the estimate \rife{dallemma} is uniform in $\vep$, so we have that $\mu^\vep$ is uniformly bounded in $L^\infty((0,\infty); L^2(\T))$ and is actually uniformly decaying as $t\to \infty$. A similar estimate is deduced for $\tilde v^\vep(t), Dv^\vep(t)$. Then, by   compactness (as in point (i) above) we conclude that $(\mu^\vep, v^\vep)$ converges towards a solution $(\mu,v)$
corresponding to  a fixed point for $\Phi_0$. Hence $u= v+ \bar u_\de, m= \mu+ \bar m_\de$ yield a solution of \rife{ih}. In addition, we also deduce the estimate
\be\label{expede2}
\| m(t)-  \bar   m_\de\|_2+ \| Du(t)-D  \bar u_\de\|_2 \leq K\, e^{-\omega t} \|m_0-  \bar m_\de\|_2 \qquad \forall t>0 \,. 
\ee
Since $m_0\in \elle\infty$, and using  the local regularizing effect of the two parabolic equations, this estimate can be upgraded into  \rife{expde}.

Finally, we show that there is a unique solution to \rife{ih}. Indeed, if $(\tilde u, \tilde m)$ is another solution, then $(\tilde u-\bar u_\de,  \tilde m - \bar m_\de)$ is  a solution to  \rife{lin-dis} (with $\sigma=1$) where the functions $f, h, B$ are defined as above in step (ii). From Lemma \ref{apriori-dis} we deduce that $ (\tilde u, \tilde m)$ also satisfies \rife{expede2}. In particular this implies that $Du-D\tilde u \in L^2((0,\infty);\elle2)$. Therefore, we can repeat for $u-\tilde u$ and $m-\tilde m$ the same arguments which were used   in   Lemma \ref{apriori-dis}  to obtain \rife{sdv}. But in this case we have $(m-\tilde m)(0)=0$, so we get
$$
\int_0^{\infty}  \into e^{-\de s}    |Du-D\tilde u|^2\leq 0
$$
which implies $Du= D\tilde u$. Then $m=\tilde m$ from the second equation and, in turn, we get $(u-\tilde u)_t= \de (u-\tilde u)$. Since the two functions are bounded, this implies $u=\tilde u$.
\qed

\subsection{Vanishing discount limit}

Now we wish to close the chain of implications by showing what happens in the vanishing discount limit. 
The preliminary result which is needed is the asymptotic behavior of problem \rife{ergde}.

\begin{prop}\label{ergdisc} Assume that $H$ satisfies  \rife{Hk} and \rife{lip}, and that  $F(x,m)$  satisfies 
\rife{fK}, \rife{f3} and is differentiable with respect to $m$. There exists $\gamma_0$, only depending on $H,F$ such that, if $\gamma<\gamma_0$ in \rife{f3}, then the sequence $(\bar u_\de, \bar m_\de)$ solution of \rife{ergde} has the following asymptotic behavior as $\de\to 0$:
$$ 
\bar u_\de -  \frac {\bar \lambda} \de \mathop{\to}^{\de \to 0} \bar u + \theta\,,\qquad \quad \bar m_\de \mathop{\to}^{\de \to 0} \bar m \qquad \hbox{locally uniformly in $\T$}
$$
where $(\bar \lambda, \bar u, \bar m)$ is the unique solution of \rife{MFGergo} and $\theta$ is the unique constant such that the following ergodic stationary problem admits a solution $(w,\rho)$:
\be\label{thetapb}
\begin{cases}
\theta+ \bar u  - \kappa\Delta w + H_p(x, D\bar u)Dw= F_m(x,\bar m) \rho\,, & \hbox{in $\Omega$}
\\
 - \kappa \Delta \rho -\dive( \rho\, H_p(x,D\bar u))- \dive(\bar m \, H_{pp}(x,D\bar u) Dw)= 0 & \hbox{in $\Omega$.}
\end{cases}
\ee
\end{prop}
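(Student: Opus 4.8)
The plan is a two-scale (matched asymptotics) argument: first identify the leading behaviour $\bar u_\de\approx\bar\lambda/\de+\bar u$, and then extract the $O(1)$ correction $\theta$ as the ergodic constant of the linearized system \rife{thetapb}. \emph{Step 1 (uniform bounds and leading order).} Since $\|H_p(\cdot,D\bar u_\de)\|_\infty\le L$ by \rife{lip}, elliptic regularity for the Fokker--Planck equation together with $\into\bar m_\de=1$ gives $\|\bar m_\de\|_\infty\le\bar\M$ uniformly in $\de$, while Harnack's inequality gives a uniform lower bound $\bar m_\de\ge c_\ast>0$; hence $\|F(\cdot,\bar m_\de)\|_\infty\le c_{\bar\M}$. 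Standard estimates for the discounted HJB equation with bounded source (as in \cite{CaPo}) then yield $\de\,\|\bar u_\de\|_\infty\le C$ and, for $\tilde u_\de:=\bar u_\de-\langle\bar u_\de\rangle$ (here $\langle v\rangle:=\into v$), bounds $\|\tilde u_\de\|_{C^{1,\alpha}(\T)}\le C$, hence $\|\tilde u_\de\|_{W^{2,p}(\T)}\le C_p$ for every $p$, all uniform in $\de$. Along a subsequence, $\bar m_\de\to\bar m^\ast$ in $C^0(\T)$, $\tilde u_\de\to\bar u^\ast$ in $C^1(\T)$, and $\de\langle\bar u_\de\rangle\to\lambda^\ast$; since $\de\bar u_\de=\de\tilde u_\de+\de\langle\bar u_\de\rangle\to\lambda^\ast$, passing to the limit in \rife{ergde} shows $(\lambda^\ast,\bar u^\ast,\bar m^\ast)$ solves \rife{MFGergo}, hence equals $(\bar\lambda,\bar u,\bar m)$ by uniqueness (Proposition \ref{erg}, valid for $\gamma<\gamma_0$). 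As the limit is unique, the whole family converges: $\bar m_\de\to\bar m$, $\tilde u_\de\to\bar u$ in $C^1(\T)$, $\de\langle\bar u_\de\rangle\to\bar\lambda$. Setting $v_\de:=\bar u_\de-\bar\lambda/\de$ and $\theta_\de:=\langle v_\de\rangle$, this says $v_\de-\theta_\de=\tilde u_\de-\bar u\to0$, so everything reduces to showing that the constants $\theta_\de$ converge, to the constant $\theta$ of \rife{thetapb}.

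\emph{Step 2 (the rate estimate, the main obstacle).} Subtract the HJB and FP equations of \rife{ergde} from those of \rife{MFGergo} and run the duality between $\bar m_\de-\bar m$ and $v_\de-\bar u$, exactly as in the proof of Theorem \ref{uniq-erg}. Convexity of $H$ contributes the coercive term $\tfrac12\alpha_{\bar\U}\into\bar m_\de|Dv_\de-D\bar u|^2$; the coupling contributes $\into[F(x,\bar m_\de)-F(x,\bar m)](\bar m_\de-\bar m)\ge-\gamma\|\bar m_\de-\bar m\|_2^2$; and the discount term contributes the \emph{only} inhomogeneous term $\de\into(\bar m_\de-\bar m)\bar u_\de=\de\into(\bar m_\de-\bar m)\tilde u_\de$ (the mean of $\bar m_\de-\bar m$ being zero), which is $\le C\de\|\bar m_\de-\bar m\|_2$ since $\|\tilde u_\de\|_\infty\le C$. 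Using the elliptic estimate $\|\bar m_\de-\bar m\|_2^2\le C\,\|\bar m\,(H_p(\cdot,D\bar u_\de)-H_p(\cdot,D\bar u))\|_2^2\le C'\into\bar m_\de|Dv_\de-D\bar u|^2$ (\cite[Corollary 1.3]{CaPo}; the bounds on $\bar m,\bar m_\de$ from above and below enter here), absorbing the $\gamma$-term for $\gamma$ small and applying Young's inequality to the discount term, one obtains
\[
\|\bar m_\de-\bar m\|_{L^2(\T)}+\|Dv_\de-D\bar u\|_{L^2(\T)}\le C\,\de .
\]
Integrating the difference of the HJB equations over $\T$ then gives $\de\,\theta_\de=\into[F(x,\bar m_\de)-F(x,\bar m)]-\into[H(x,D\bar u_\de)-H(x,D\bar u)]$, whence $|\theta_\de|\le C$ uniformly in $\de$, by the Lipschitz bounds \rife{fK} in $m$ and \rife{lip} in $p$.

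\emph{Step 3 (linearization, uniqueness of $\theta$, conclusion).} Put $w_\de:=(\tilde u_\de-\bar u)/\de$ and $\rho_\de:=(\bar m_\de-\bar m)/\de$, so $\langle w_\de\rangle=\langle\rho_\de\rangle=0$ and $\|w_\de\|_{H^1(\T)}+\|\rho_\de\|_{L^2(\T)}\le C$ by Step 2. Dividing the difference equations by $\de$ and writing the increments of $H$, $H_p$, $F$ as segment averages of $H_p$, $H_{pp}$, $F_m$ (using $F$ differentiable, \rife{fK}, and $\tilde u_\de\to\bar u$ in $C^1$, $\bar m_\de\to\bar m$ in $C^0$), the pair $(w_\de,\rho_\de)$ solves a linear elliptic system whose coefficients converge to those of \rife{thetapb}, up to an extra lower-order term $\de\,w_\de$; a routine bootstrap upgrades the bounds to $C^{1,\alpha}(\T)\times C^{0,\alpha}(\T)$, uniformly in $\de$. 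Passing to the limit along a subsequence, $(\theta_\de,w_\de,\rho_\de)\to(\theta,w,\rho)$ with $(w,\rho)$ solving \rife{thetapb} and $\langle w\rangle=\langle\rho\rangle=0$ — this is the existence part. Uniqueness of the constant $\theta$ follows from the same duality argument as in Theorem \ref{uniq-erg}, now applied to \rife{thetapb}: testing the $w$-equation against $\rho_1-\rho_2$ (so that $\theta_1-\theta_2$ drops out, as $\langle\rho_i\rangle=0$) and the $\rho$-equation against $w_1-w_2$, convexity of $H$ and $F_m\ge-\gamma$ with $\gamma$ small force $D(w_1-w_2)=0$, hence $w_1=w_2$, then $\rho_1=\rho_2$ and $\theta_1=\theta_2$. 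Since the subsequential limit is unique, the whole family converges, and $\bar u_\de-\bar\lambda/\de=v_\de=\bar u+\theta_\de+\de\,w_\de\to\bar u+\theta$, $\bar m_\de=\bar m+\de\,\rho_\de\to\bar m$, uniformly on $\T$.

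The crux is Step 2: the discount contribution $\de\into(\bar m_\de-\bar m)\tilde u_\de$ is only $O\!\big(\de\,\|\bar m_\de-\bar m\|_2\big)$, hence not obviously negligible, and the estimate closes only by playing it off against the coercive $H$-term and the smallness of $\gamma$ (which is why the same threshold $\gamma_0$ as for the ergodic uniqueness is sufficient). Once the $O(\de)$ rate is in hand, the rescaling and the passage to the limit in Step 3 are routine, modulo the customary care with the merely measurable $x$-dependence of $H$.
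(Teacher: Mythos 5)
Your proof is correct and follows essentially the same strategy as the paper's: duality between the HJB and Fokker--Planck differences, closing the estimate with the $L^2$ bound $\|\bar m_\de-\bar m\|_2^2\le C\into\bar m\,|D\bar u_\de-D\bar u|^2$ from \cite[Cor.~1.3]{CaPo} and the smallness of $\gamma$, then compactness plus uniqueness of the limiting linearized ergodic problem. The only organizational difference is that you first establish leading-order convergence ($\bar m_\de\to\bar m$, $\tilde u_\de\to\bar u$) in a separate step and then phrase the duality estimate as an explicit $O(\de)$ rate on the unrescaled differences before rescaling to $(w_\de,\rho_\de)$; the paper instead works with $(w_\de,\mu_\de)$ from the outset and proves boundedness of these directly. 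After multiplying your rate estimate by $\de^{-2}$ the two inequalities are identical (the inhomogeneous term is $\into\tilde u_\de\,\mu_\de$ in both — the $\de\into w_\de\mu_\de+\into\bar u\,\mu_\de$ in the paper collapses to exactly that once one drops constants against $\into\mu_\de=0$), so this is a presentational variant rather than a different route.
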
 

We admit for a while the above result and we proceed with the proof of the vanishing discount limit of the infinite horizon problem.

\begin{thm}\label{vanlim} Under the assumptions of Proposition \ref{ergdisc}, let $(u_\de, m_\de)$ be the solution of \rife{ih}.  As $\de\to 0$, we have 
$$
u_\de- \frac{\bar \lambda}\de \to   v \,\,\,; \qquad \quad m_\de\to \mu 
$$
where $(v,\mu)$ is the unique solution of 
$$
\begin{cases}
-v_t + \bar \lambda - \kappa\Delta v + H(x, D v)= F(x,\mu )\,,\,\,\qquad\hbox{$t\in (0,\infty)$} &  
\\
\mu_t- \kappa \Delta \mu -\dive( \mu\, H_p(x,D v))= 0\,,\,\, \qquad \hbox{$t\in (0,\infty)$}& 
\\
\mu(0)= m_0 \,, & \\
v\in L^\infty((0,\infty)\times \T)\,,\,
\,\, Dv\in D\bar u + L^2((0,\infty);\elle2)\,,\, \,\,  \,
\lim\limits_{t\to \infty} \int v(t)dx = \theta &  
\end{cases}
$$
where   the constant $\theta$ is  the unique ergodic  constant of problem \rife{thetapb}. Moreover, we have
\be\label{commute}
v(t,x) \mathop{\to}^{t\to \infty} \bar u(x) + \theta\,\,\,, \,\,\,  \mu(t,x) \mathop{\to}^{t\to \infty} \bar m(x)\qquad \hbox{uniformly in $\T$.}
\ee
\end{thm}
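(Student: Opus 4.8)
The plan is to combine the $\de$-uniform exponential stabilization furnished by Theorem \ref{discex} with the ergodic asymptotics of Proposition \ref{ergdisc}, and then to identify the limit via the uniqueness statement of Section 5. Throughout I would take $\gamma_0$ small enough that Theorem \ref{discex}, Proposition \ref{ergdisc} and Lemma \ref{uni-lim} all apply — their thresholds depend only on $\kappa,\|m_0\|_\infty$ and on $H,F$, so a single $\gamma_0$ works. Writing $w_\de:=u_\de-\bar u_\de$ and $\pi_\de:=m_\de-\bar m_\de$, the first step is to note that $\bar m_\de\to\bar m$ uniformly (and $\|\bar m_\de\|_\infty$ is uniformly bounded), so $\|m_0-\bar m_\de\|_\infty$ is bounded independently of $\de$, and \rife{expde} gives constants $c,\omega>0$ independent of $\de\le\de_0$ such that
\be
\|\pi_\de(t)\|_\infty+\|Dw_\de(t)\|_\infty\le c\,e^{-\omega t}\qquad\forall t>0.
\ee
Decomposing $w_\de(t)=\tilde w_\de(t)+\langle w_\de(t)\rangle$, the oscillation of $w_\de(t)$ is controlled by $\|Dw_\de(t)\|_\infty$, so $\|\tilde w_\de(t)\|_\infty\le c\,e^{-\omega t}$ and it only remains to bound the averages $a_\de(t):=\langle w_\de(t)\rangle$.

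For this I would integrate over $\T$ the difference of the HJB equations in \rife{ih} and \rife{ergde}, obtaining the scalar ODE $-a_\de'(t)+\de\,a_\de(t)=b_\de(t)$ with $b_\de(t)=\into[F(x,m_\de)-F(x,\bar m_\de)]-\into[H(x,Du_\de)-H(x,D\bar u_\de)]$, so that $|b_\de(t)|\le c\,e^{-\omega t}$ by the Lipschitz bounds on $F,H$ and the previous display, uniformly in $\de$. Since $u_\de\in L^\infty((0,\infty)\times\T)$, $a_\de$ is bounded, hence it is the bounded solution $a_\de(t)=\int_t^\infty e^{-\de(s-t)}b_\de(s)\,ds$, which gives $|a_\de(t)|\le\int_t^\infty|b_\de(s)|\,ds\le c\,e^{-\omega t}$ uniformly in $\de\le\de_0$. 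Combined with the bound on $\tilde w_\de$ this yields the $\de$-uniform estimate
\be\label{eq:unifdecay}
\|u_\de(t)-\bar u_\de\|_\infty+\|m_\de(t)-\bar m_\de\|_\infty\le c\,e^{-\omega t}\qquad\forall t>0.
\ee

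Next I would set $v_\de:=u_\de-\bar\lambda/\de=w_\de+(\bar u_\de-\bar\lambda/\de)$. By Proposition \ref{ergdisc}, $\bar u_\de-\bar\lambda/\de\to\bar u+\theta$ and $\bar m_\de\to\bar m$ uniformly in $\T$, so together with \rife{eq:unifdecay} the families $\{v_\de\}$, $\{m_\de\}$ are bounded, uniformly in $\de$, on each $[0,b]\times\T$; by local parabolic regularity (using \rife{lip}) they are relatively compact in $C^0_{\mathrm{loc}}([0,\infty)\times\T)$, so along a subsequence $v_\de\to v$, $m_\de\to\mu$ locally uniformly. Rewriting the $u_\de$-equation as $-\partial_t v_\de+\de v_\de+\bar\lambda-\kappa\Delta v_\de+H(x,Dv_\de)=F(x,m_\de)$ and using $\de v_\de\to0$ locally uniformly, the limit $(v,\mu)$ solves the first two equations of \rife{MFG-infty} with $\mu(0)=m_0$. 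Passing to the limit in \rife{eq:unifdecay} and in the gradient bound of \rife{expde} I would get
$$\|Dv(t)-D\bar u\|_\infty+\|v(t)-\bar u-\theta\|_\infty+\|\mu(t)-\bar m\|_\infty\le c\,e^{-\omega t}\qquad\forall t>0,$$
so that $Dv-D\bar u\in L^2((0,\infty);\elle2)$ and $(v,\mu)$ is an admissible solution of \rife{MFG-infty}; since $\into\bar u=0$ this also gives $\lim_{t\to\infty}\into v(t)=\theta$, and \rife{commute} is already contained in the last display.

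Finally, by Lemma \ref{uni-lim} problem \rife{MFG-infty} determines $\mu$ uniquely and $v$ up to an additive constant, and that constant is fixed by $\lim_{t\to\infty}\into v(t)=\theta$; hence the limit $(v,\mu)$ is independent of the subsequence, so the whole family $(u_\de-\bar\lambda/\de,\,m_\de)$ converges to it as $\de\to0$, and it coincides with the solution selected in Theorem \ref{convuT} with $\bar c=\theta$. The main obstacle, I expect, is exactly making \rife{eq:unifdecay} $\de$-uniform: Theorem \ref{discex} already controls $Du_\de-D\bar u_\de$ and $m_\de-\bar m_\de$ with $\de$-independent constants, but the ``height'' $\langle u_\de(t)-\bar u_\de\rangle$ is not controlled there, and one must carefully run the ODE step above — verifying both that its forcing is $O(e^{-\omega t})$ uniformly in $\de$ and that choosing the bounded solution is justified because $u_\de$ is globally bounded in space-time. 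A more routine nuisance is the bookkeeping of the several smallness thresholds for $\gamma$ coming from Theorem \ref{discex}, Proposition \ref{ergdisc} and Lemma \ref{uni-lim}.
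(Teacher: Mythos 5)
Your proposal is correct and reaches the same conclusion, but it handles the crucial $\de$-uniform sup-norm bound on $u_\de-\bar u_\de$ in a genuinely different way. You split $w_\de=u_\de-\bar u_\de$ into its zero-mean part (bounded by the diameter of $\T$ times $\|Dw_\de\|_\infty$, which decays by \rife{expde}) and its average $a_\de(t)=\langle w_\de(t)\rangle$, then integrate the two HJB equations over $\T$ to obtain the scalar ODE $-a_\de'+\de a_\de=b_\de$ with $b_\de=O(e^{-\omega t})$ uniformly in $\de$, and select the unique bounded solution $a_\de(t)=\int_t^\infty e^{-\de(s-t)}b_\de(s)\,ds$, which is $O(e^{-\omega t})$ with constants independent of $\de$. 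The paper instead observes that $\|F(\cdot,m_\de)-F(\cdot,\bar m_\de)\|_\infty\le Ce^{-\omega t}$ and checks directly that $\bar u_\de\pm Me^{-\omega t}$ are super/subsolutions of the backward HJB equation for $u_\de$, so the comparison principle (in the class of bounded solutions) immediately yields $\|u_\de-\bar u_\de\|_\infty\le Me^{-\omega t}$. The comparison route is shorter and does not require controlling the oscillation separately from the average, while your ODE route is more explicit and isolates precisely why boundedness of $u_\de$ forces the exponentially decaying branch — which you correctly flag as the sensitive point. Note that your identity $-a_\de'+\de a_\de=b_\de$ does need the sign convention to be tracked carefully (with your definition of $b_\de$ the formula $a_\de(t)=\int_t^\infty e^{-\de(s-t)}b_\de(s)\,ds$ is indeed the right bounded solution). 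From \rife{eq:unifdecay} onward your argument — compactness, identification of the limit as a solution of \rife{MFG-infty} with $\lim_{t\to\infty}\into v(t)=\theta$, and uniqueness via Lemma \ref{uni-lim} — matches the paper's proof essentially verbatim.
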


\proof  Using \rife{expde}, and assumption \rife{fK}, we know that there exists  a constant $C$  such that
$$
\|F(x,m_\de)-F(x,\bar m_\de)\|_\infty \leq C \, e^{-\omega t}\qquad \forall t>0\,.
$$
We deduce that, for a convenient constant $M$, the functions $\bar u_\de  \pm M e^{-\omega t}$ are, respectively, super and subsolution of the equation 
$$
-u_t + \de u -   \kappa \Delta u + H(x,Du)= F(x,m_\de)\qquad t\in (0,\infty)\,.
$$
Using the comparison principle  (in the class of bounded solutions) for the viscous Hamilton-Jacobi equation, we obtain that
\be\label{stimaunif}
\bar u_\de - M e^{-\omega t}\leq u_\de \leq  \bar u_\de + M e^{-\omega t}\qquad \forall t>0\,.
\ee
Now we define
$$
v_\de(t,x):= u_\de(t,x)  -\frac{\bar \lambda} \de\,.
$$
Due to \rife{stimaunif} and Proposition \ref{ergdisc}, we deduce that $v_\de$ is uniformly bounded in $(0,\infty)\times \T$. From estimate \rife{expde}, we also know that $Dv_\de$ is uniformly bounded, and so is $m_\de$ as well. By local compactness and stability of the MFG system, there exists
a subsequence - not relabeled - and a couple $(v, \mu)$ such that  $v_\de$ converges to $v$, $m_\de$ converges to $\mu$  (locally uniformly) and $(v,\mu)$ is a solution of 
\rife{MFG-infty}. Notice that $Dv -D\bar u \in L^2((0,\infty)\times \T)$ as a consequence of  \rife{expede2}.  Finally, \rife{stimaunif} and Proposition \ref{ergdisc} imply that
$$
|v(t,x) - \bar u (x) - \theta | \leq C \, e^{-\omega t}  \mathop{\to}^{t\to \infty} 0
$$
and in particular 
\be\label{averthe}
\lim\limits_{t\to \infty} \into v(t)  = \theta\,.
\ee
Therefore $(v,\mu)$ is the unique solution of \rife{MFG-infty} satisfying \rife{averthe}.  We deduce that the whole sequence $(v_\de, m_\de)$ converges and this concludes the proof.
\qed

We point out that Theorem \ref{discex}, Proposition \ref{ergdisc} and Theorem \ref{vanlim} establish that the two limits, for time going to infinity and discount factor going to zero, actually commute.

We are only left with the proof of Proposition  \ref{ergdisc}, which is similar to  \cite[Prop 6.5]{CaPo}.

\vskip1em

{\bf Proof of Proposition \ref{ergdisc}.}
\quad 
Let $(\bar u_\de, \bar m_\de)$ be solutions of  \rife{ergde}.  We set
 $$
 w_\de:= \frac{\bar u_\de- \frac{\lambda} \de - \bar u}\de\,, \qquad \mu_\de= \frac{\bar m_\de-\bar m}\de
 $$
 and we verify that $(w_\de, \mu_\de)$ solves the problem
$$
\begin{cases}
  \de w_\de+ \bar u  -   \kappa \Delta w_\de + \frac{H(x,D\bar u + \de Dw_\de)- H(x,D\bar u)}\de= \frac{F(x,\bar m+ \de \mu_\de)- F(x, \bar m)}\de & 
\\
 -  \kappa  \Delta \mu_\de- \dive(\mu_\de\, H_p(x,D\bar u + \de Dw_\de)) = \dive\left( \bar m  \frac{[H_p(x,D\bar u + \de Dw_\de)- H(x,D\bar u)]}\de \right) & 
\\
\into \mu_\de=0    \,. 
& \end{cases}
$$
 We rephrase this problem as the   system 
\be\label{stat}
 \begin{cases}
 \de w_\de+ \bar u - \kappa \Delta w_\de + h^\de(x, Dw_\de)  = f^\de(x,  \mu_\de)   & 
\\
  - \kappa \Delta \mu_\de- \dive(\mu_\de\, h_p^\de(x, Dw_\de)) =  \dive(B^\de(x,Dw_\de)) & 
\\
\into \mu= 0 \,.   
& \end{cases}
\ee
where we have   
\begin{align*}
& h^\de(x,p): = \frac{H(x,D\bar u + \de p)- H(x,D\bar u)}\de  \to H_p(x, D\bar u)\cdot p \,,
\\
& f^\de(x,\mu): =\frac{F(x,\bar m+ \de \mu )- F(x, \bar m)}\de \to  F_m(x, \bar m ) \mu
\\
&  B^\de(x,p): = \bar m  \frac{[H_p(x,D\bar u + \de p)- H(x,D\bar u)]}\de  \to  \bar m\, H_{pp}(x, D\bar u) \,p
\end{align*}
It is easy to see (due to \rife{lip}) that   $\|\bar m_\de\|_\infty$, and then $\|D\bar u_\de\|_\infty$,  are bounded independently of $\de$. Then we can use the local conditions
\rife{Hk}, \rife{fK} and we deduce that $f^\de$, $B^\de$ satisfy
 \be\label{f1}
  |f^\de(x,\mu_\de)| \leq C_1\, |\mu_\de|
 \ee
and
  \be\label{Be}
| B^\de (x,Dw_\de)| \leq C_2 \, |Dw_\de|\,,\qquad B^\de(x,Dw_\de) Dw_\de \geq  c_0 \, |Dw_\de|^2\,.
 \ee
Therefore, using the convexity of $h^\de$ and assumption \rife{f3},  we estimate
\begin{align*}
\de \into w_\de \mu_\de + \into \bar u\, \mu_\de & \geq \into f^\de(x,\mu_\de)\mu_\de + \into B^\de(x,Dw_\de) Dw_\de
\\
& \geq c_0 \into  |Dw_\de|^2 - \gamma \into \mu_\de^2 \,.
\end{align*}
From the second equation in \rife{stat} we infer (see e.g. \cite[Corollary 1.3]{CaPo}) that, for a constant  $C$ only depending on $\kappa, \|h_p\|_\infty$, 
\be\label{uff}
 \into  \mu_\de^2   \leq C \into |B(x,Dw_\de|^2  \leq C\, C_2^2 \into |Dw_\de|^2\,.
\ee
 Therefore, using also the Poincar\'e-Wirtinger inequality, we deduce
 $$
 c_0 \into  |Dw_\de|^2 \leq \gamma \, C\, C_2^2 \into |Dw_\de|^2 + \de \| Dw_\de\|_2 \, \|\mu_\de\|_2 + \|\bar u\|_2 \, \|\mu_\de\|_2\,.
 $$
 Using again \rife{uff} in the last two terms, we see that  there exists $\gamma_0>0$ such that if $\gamma<\gamma_0$,  and $\de$ is sufficiently small, we have  that $Dw_\de$, and in turn $\mu_\de$, are bounded in $\elle2$.
 Now, using that $f^\de, h^\de,  B^\de$ grow at most linearly with respect to  $\mu_\de$ and $w_\de$, respectively, we can use a bootstrap  regularity argument and we conclude that  $\mu_\de$, $Dw_\de$ are bounded in $\elle\infty$. 
  
From the bound of $B^\de(x,Dw_\de)$ and $h_p$, we can now deduce that $\mu_\de$ is bounded in $C^{0,\alpha}(\T)$ for some $\alpha>0$, hence it is relatively compact in the uniform topology. Similarly, there exists $w$ such that, up to subsequences, $w_\de- \lg w_\de \rg $ converges to $w$ uniformly and in $W^{1,p}(\T)$, for any $p<\infty$. 
Since $\de\|w_\de\|_\infty$ is bounded, overall we conclude that, for some constant $\theta\in \R$ and some subsequence (not relabeled), we have
\be\label{limde}
\hbox{$\de w_\de \to \theta$, $w_\de - \lg w_\de \rg \to w$, $\mu_\de \to \mu$, uniformly in $\T$,}
\ee
where $(\theta, w, \mu)$ is a  solution of  \rife{thetapb}.
 We only need to prove the uniqueness for this limit problem. This is the usual argument we just used  in \rife{stat}. In problem \rife{thetapb} we observe that $F_m(x,\bar m)\geq -\gamma$ due to \rife{f3} and that $H_{pp}(x,D\bar u) $ is bounded from below and from above. Therefore,  if 
 $(\hat \theta, \hat w, \hat \mu)$ is any other solution,   we have
$$
  c_0 \into \bar m\, |Dw-D\hat w|^2 \leq  \gamma \into  (\mu-\hat \mu)^2 \leq \gamma \, C\, C_0^2\, \|\bar m\|_\infty \into \bar m\, |Dw-D\hat w|^2\\
$$
where $c_0, C_0$ denote the bounds of $H_{pp}(x,D\bar u)$ from below and from above, and we estimated $\mu-\hat \mu$ as we did before.
Thus, if $\gamma$ is sufficiently small, we deduce that $Dw= D\hat w$, and the uniqueness follows (first of $w$, then of $\mu$ by the second equation, and finally of $\theta$ from the first equation). The uniqueness of the limit also implies the convergence of the whole sequence $w_\de, \mu_\de$. Finally,  we find that $\de w_\de \to \theta$, i.e. $\bar u_\de- \frac{\lambda} \de - \bar u \to \theta$. So the Proposition is proved.
\qed


 \appendix
\section{Appendix}

We collect here  global in time  decay estimates of both viscous Hamilton-Jacobi and Fokker-Planck equations, which we used throughout the paper. 

We start with estimates on the viscous Hamilton-Jacobi equation, which can be found in the Appendix of \cite{CLLP2}.

\begin{lem}\label{lem72}  For given $V \in L^\infty ((0,T) \times \Omega)$ and $v_0 \in L^2  (\T)$, let  $v$ be
the solution of
\be\label{73}
\begin{cases}
 -v_t - \kappa \Delta v + Dv \cdot V = f & \hbox{in $(0, T ) \times \Omega$,}
 \\
 v(T)=v_0 \,.& 
 \end{cases}
 \ee
Then there exist constants $\nu>0$ and $C>0$ (only depending on $\kappa, d, \|V\|_\infty$) such that

(i)  $\tilde v := v - \lg v\rg $ satisfies
 $$
 \|\tilde v(t)\|_2 \leq C\, e^{-\nu (T-t)} \,    \|\tilde v_0\|_2 + C \int_t^T\|f (s)\|_2 \,e^{-\nu(s-t)}\, ds\qquad \forall t \leq T\, . 
 $$

(ii) For every $0 < t < t_0<T$, we have
$$
 (t_0 - t)\|Dv(t)\|_2^2 \leq C\,  [(t_0 - t) + 1]\left\{\|\tilde v(t_0 )\|_2^2 +\|f\|^2_{L^2  ((t,t_0 )\times\Omega) }+\|\tilde v\|_{L^2  ((t,t_0 )\times\Omega)}^2\right\}\,,
$$
 and
 $$
 \int_t^{t_0} \into |Dv|^2 \leq C\, \|\tilde v(t_0 )\|_2^2 + C\int_t^{t_0}\into  [|f |^2 + |\tilde v|^2 ]\,. 
 $$
\end{lem}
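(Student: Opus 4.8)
The plan is to handle the two assertions separately, the common theme being that, for a merely bounded drift $V$, the operator $-\kappa\Delta + V\cdot D$ acting on zero-mean functions on $\T$ is exponentially stable in $L^2$, with rate $\nu$ and constant $C$ depending only on $\kappa, d, \|V\|_\infty$. For (i), I would first integrate the equation over $\T$ (the Laplacian term drops out) to get the ODE satisfied by the spatial average $\lg v(t)\rg$, and subtract it to obtain the equation for $\tilde v = v - \lg v\rg$, which has the same drift structure, zero source average, and zero mean. Then, for an arbitrary mean-zero $\rho_0 \in L^2(\T)$, I would let $\rho$ solve the forward Fokker--Planck equation $\rho_s - \kappa\Delta\rho - \dive(\rho V) = 0$ on $(t,T)$ with $\rho(t) = \rho_0$; a routine duality computation (all second- and first-order terms cancel after integration by parts) gives the identity
\[
\into \tilde v(t)\,\rho_0 = \into \tilde v_0\,\rho(T) + \int_t^T \into \big(f(s) - \lg f(s)\rg\big)\,\rho(s)\,ds .
\]

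The essential ingredient is then the decay estimate $\|\rho(s)\|_{L^2(\T)} \le C\, e^{-\nu(s-t)}\,\|\rho_0\|_{L^2(\T)}$ for mean-zero $\rho_0$, with $C,\nu$ depending only on $\kappa,d,\|V\|_\infty$. I would obtain it from the contractivity of the Fokker--Planck flow in $L^1$, the exponential $L^1$-decay of mean-zero data coming from the Doeblin/Harnack lower bound on the (uniformly parabolic) transition kernel, the $L^1$--$L^2$ smoothing of the parabolic semigroup, and the elementary bound $\frac{d}{ds}\|\rho(s)\|_2^2 \le \frac{\|V\|_\infty^2}{\kappa}\|\rho(s)\|_2^2$ to handle short times. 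Feeding this into the duality identity, using $\|f(s) - \lg f(s)\rg\|_2 \le \|f(s)\|_2$, and finally choosing $\rho_0 = \tilde v(t)$ (which is mean-zero), I divide by $\|\tilde v(t)\|_2$ and obtain precisely (i).

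For (ii) I would use energy estimates. Testing the equation for $\tilde v$ by $\tilde v$ and integrating over $(t,t_0)\times\T$, absorbing $\into(V\cdot D\tilde v)\tilde v$ into $\tfrac{\kappa}{2}\|D\tilde v\|_2^2$ by Young's inequality, yields the bound on $\|Dv\|_{L^2((t,t_0)\times\T)}$ (note $Dv = D\tilde v$) by $\|\tilde v(t_0)\|_2$, $\|f\|_{L^2}$, $\|\tilde v\|_{L^2}$. For the pointwise-in-time bound, I would introduce the weight $\eta(s) = t_0 - s$ and compute $\frac{d}{ds}\big(\eta(s)\|Dv(s)\|_2^2\big)$ using $v_s = -\kappa\Delta v + V\cdot Dv - f$; integrating in $s$, the boundary term at $s=t_0$ vanishes and a good term $\kappa\int_t^{t_0}\eta\,\|\Delta v\|_2^2$ appears, into which the cross terms $\int_t^{t_0}\eta\into(V\cdot Dv)\Delta v$ and $\int_t^{t_0}\eta\into f\,\Delta v$ are absorbed by Young; what survives is $\int_t^{t_0}\|Dv\|_2^2$ plus an $\|f\|_{L^2}^2$ term, and substituting the previous estimate produces the factor $1 + (t_0-t)$ in front.

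The main obstacle is the Fokker--Planck decay used in (i): proving exponential return to equilibrium --- equivalently, a spectral gap for the drift operator on mean-zero functions --- with a rate depending only on $\kappa, d, \|V\|_\infty$ and not on any regularity of $V$. The naive $L^2$ energy estimate fails here, since the drift term is not skew-adjoint when $\dive V \neq 0$, so one genuinely needs the ergodic argument (Harnack/Aronson lower bounds for the kernel, Doeblin coupling); this is exactly where the hypothesis $V\in L^\infty$ is all that is used. Everything else is routine, the formal computations being justified on Galerkin approximations or by parabolic regularity. We note that this lemma is classical and a detailed proof is given in the Appendix of \cite{CLLP2}.
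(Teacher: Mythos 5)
The paper does not actually prove this lemma in-text: the statement is preceded by the remark that these estimates ``can be found in the Appendix of \cite{CLLP2}'' and is followed directly by a \texttt{\textbackslash qed}, so there is no in-paper argument to compare against. Your sketch is correct and self-contained at the outline level, and it correctly isolates the one genuinely nontrivial ingredient, namely the exponential $L^2$-decay of the Fokker--Planck flow on mean-zero data with rate $\nu = \nu(\kappa,d,\|V\|_\infty)$. The duality identity for (i) is right (note you do not really need to derive the equation for $\tilde v$ first: testing the equation for $v$ against a mean-zero $\rho$ makes all the $\langle\cdot\rangle$ terms drop automatically, since they are spatial constants), and the weighted energy estimate with $\eta(s)=t_0-s$ is the standard backward-smoothing argument for (ii). One remark worth recording: for the Fokker--Planck decay you go through Doeblin/Harnack coupling in $L^1$ plus $L^1$--$L^2$ ultracontractivity; an equally common route in the MFG literature is to use the invariant measure $m_V$ of the dual operator (which is bounded above and below with constants depending only on $\kappa,d,\|V\|_\infty$, again by Harnack) and prove a spectral gap in the weighted space $L^2(m_V^{-1}dx)$ via Poincar\'e. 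The two routes need the same input and give the same conclusion, so your choice is a matter of taste; either is consistent with what \cite{CLLP2} establishes.
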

\qed
\vskip0.4em

 Now we turn to the Fokker-Planck equation.
 
\begin{lem}\label{stimaRhoF}
Assume that
$V \in L^\infty((t_0, T) \times \T)$, $F \in L^2((t_0,T); L^2(\T))$, and let $\rho \in L^2((t_0,T) ; L^2_0(\T))$ be a solution to
\begin{equation}\label{rhoeq}
\rho_t - \kappa \Delta \rho - {\rm div}(\rho V) = {\rm div}(F) \qquad \text{in $(t_0, T) \times \T$}\,.
\end{equation}
Then there exists $\nu>0$ and a constant $C$ (only depending on $\kappa,d$ and $\|V\|_\infty$) such that
we have, for every $t > t_0\geq  0$:
\be\label{76}
\|\rho(t)\|_2^2 \leq C \, \left\{e^{-\nu(t-t_0)}\, \|\rho(t_0 )\|_2^2 + \int_{t_0}^t \into  |F |^2  \right\} 
\ee
Moreover, if $t_1$ is such that  $t_0 \le t_1 < T-1$,    
 then we have, for every $\de\geq 0$,
\be\label{77}
\int_{t_1}^T e^{-\de t}\|\rho(t)\|^2_2 dt \le C \left\{e^{-\de t_1} \|\rho(t_0)\|^2_2 e^{-2\nu (t_1-t_0)} + \int_{t_0}^T e^{-\de t}\|F(t)\|^2_2 dt \right\}
\ee
for some $C > 0$ depending on $\kappa, d,\|V\|_\infty$ and independent of $\de$.
\end{lem}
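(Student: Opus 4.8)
The plan is to prove both \rife{76} and \rife{77} by a duality argument with the viscous Hamilton--Jacobi estimates of Lemma \ref{lem72} (the classical ``adjoint method''). Fix a time $\theta\in(t_0,T]$ at which we want to bound $\rho(\theta)$, and fix $\phi\in L^2_0(\T)$ with $\|\phi\|_2\le1$; since $\rho(\theta)$ has zero average, $\|\rho(\theta)\|_2=\sup\{\into\rho(\theta)\phi:\phi\in L^2_0(\T),\ \|\phi\|_2\le1\}$. Let $w$ solve on $(t_0,\theta)$ the backward equation $-w_t-\kappa\Delta w+V\cdot Dw=0$ with terminal datum $w(\theta)=\phi$; this is precisely the adjoint of the Fokker--Planck operator in \rife{rhoeq}, and it falls into the scope of Lemma \ref{lem72} with source $f=0$. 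Using $w$ as test function in \rife{rhoeq} and $\rho$ as test function in the equation for $w$, a routine integration by parts gives the duality identity
\[
\into\rho(\theta)\phi=\into\rho(t_0)\tilde w(t_0)-\int_{t_0}^\theta\into F\cdot Dw\,,
\]
where I used $\into\rho(t_0)=0$ to replace $w(t_0)$ by $\tilde w(t_0)=w(t_0)-\lg w(t_0)\rg$. (For a merely $L^2$ weak solution $\rho$ this identity is justified in the usual way, by first taking $\phi$ smooth and then passing to the limit.)

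Next I estimate the two terms on the right. For the first, Lemma \ref{lem72}(i) with $f=0$ yields $\|\tilde w(t_0)\|_2\le C e^{-\nu(\theta-t_0)}\|\phi\|_2$, hence $|\into\rho(t_0)\tilde w(t_0)|\le Ce^{-\nu(\theta-t_0)}\|\rho(t_0)\|_2$. For the second, I need a weighted-in-time control of $\|Dw(t)\|_2$: on $(t_0,\theta-1)$ the first inequality in Lemma \ref{lem72}(ii), applied on the unit strip $(t,t+1)$ together with the decay of $\tilde w$ from (i), gives $\|Dw(t)\|_2\le C e^{-\nu(\theta-t)}\|\phi\|_2$; on the remaining strip $(\theta-1,\theta)$ — where $\phi$ carries no gradient control — I use instead the integrated inequality $\int_{\theta-1}^\theta\|Dw\|_2^2\le C\|\phi\|_2^2$ from Lemma \ref{lem72}(ii). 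Splitting $\int_{t_0}^\theta\into F\cdot Dw$ accordingly and applying Cauchy--Schwarz on each piece, I obtain $|\int_{t_0}^\theta\into F\cdot Dw|\le C\|\phi\|_2(\int_{t_0}^\theta e^{-\nu(\theta-s)}\|F(s)\|_2^2\,ds)^{1/2}$. Taking the supremum over $\phi$ and squaring yields the sharp estimate
\[
\|\rho(\theta)\|_2^2\le C\,e^{-2\nu(\theta-t_0)}\|\rho(t_0)\|_2^2+C\int_{t_0}^\theta e^{-\nu(\theta-s)}\|F(s)\|_2^2\,ds\,,
\]
from which \rife{76} is immediate (bound the exponential weight by $1$ and recall that $e^{-2\nu}\le e^{-\nu}$, so one may relabel the rate).

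Finally, \rife{77} follows from the displayed sharp estimate by multiplying by $e^{-\delta t}$ and integrating over $t\in(t_1,T)$. The first term contributes $\le C\|\rho(t_0)\|_2^2\int_{t_1}^T e^{-\delta t-2\nu(t-t_0)}dt\le \tfrac{C}{2\nu}e^{-\delta t_1}e^{-2\nu(t_1-t_0)}\|\rho(t_0)\|_2^2$ since $\delta\ge0$. For the second term, Fubini gives $\int_{t_1}^T e^{-\delta t}\int_{t_0}^t e^{-\nu(t-s)}\|F(s)\|_2^2\,ds\,dt=\int_{t_0}^T\|F(s)\|_2^2 e^{\nu s}\big(\int_{\max(s,t_1)}^T e^{-(\delta+\nu)t}dt\big)ds$, and since $e^{\nu s-(\delta+\nu)\max(s,t_1)}\le e^{-\delta s}$ and $\tfrac1{\delta+\nu}\le\tfrac1\nu$, this is $\le\tfrac1\nu\int_{t_0}^T e^{-\delta s}\|F(s)\|_2^2\,ds$; all constants depend only on $\kappa,d,\|V\|_\infty$ and are independent of $\delta$ and $T$, as required.

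The step I expect to be the main obstacle is the weighted gradient estimate for $w$ near the terminal time $\theta$: there $\phi$ lies only in $L^2$, so $\|Dw(t)\|_2$ need not stay bounded as $t\to\theta^-$, and one must pass through the time-integrated form of Lemma \ref{lem72}(ii) on a full unit strip rather than its pointwise form — this is exactly why leaving a unit interval of room (the hypothesis $t_1<T-1$ in \rife{77}) is natural. Everything else is routine: the duality identity, Cauchy--Schwarz on the two subintervals, and the elementary Fubini bookkeeping that must be carried out keeping the constant independent of $\delta$.
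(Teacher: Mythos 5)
Your proof is correct and follows essentially the same duality strategy as the paper: solve the backward adjoint equation on $(t_0,t)$, use Lemma~\ref{lem72} for the exponential decay of $\tilde w$ and $Dw$, and split the interval at $t-1$ to handle the loss of pointwise gradient control near the terminal time. The only organizational difference is that the paper cites \cite{CLLP2} for \eqref{76} and proves \eqref{77} directly by multiplying the unweighted inequality for $\|\rho(t)\|_2$ by $e^{-\de t}$ and exchanging the order of integration (inserting the factor $e^{\de(t-s)}\ge 1$ on the short strip), whereas you first derive the weighted pointwise bound $\|\rho(\theta)\|_2^2 \le Ce^{-2\nu(\theta-t_0)}\|\rho(t_0)\|_2^2 + C\int_{t_0}^\theta e^{-\nu(\theta-s)}\|F(s)\|_2^2\,ds$ and deduce both \eqref{76} and \eqref{77} from it; this is a mildly cleaner bookkeeping but not a different argument.
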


\begin{proof} Estimate \rife{76} is already proved in the Appendix  of \cite{CLLP2}. So we only need to prove \rife{77}.

For a fixed $t \in (t_0, T)$, we start with considering the solution $v$ of
\begin{equation}\label{veq}
\begin{cases}
- v_t - \kappa \Delta v + Dv \cdot V = 0 & \text{in $(t_0, t) \times \T$,} \\
v(t) = \frac{\rho(t)}{\| \rho(t) \|_2}.
\end{cases}
\end{equation}
Lemma \ref{lem72} yields
\begin{equation}\label{tildeve}
\|\tilde v(s)\|_2 \le C e^{-\nu(t-s)} \qquad \forall s \le t
\end{equation}
where $\nu > 0$ and $C > 0$ depends on $\kappa, \|V\|_\infty$ only. Throughout the proof, the value of $C$ may increase, but will be always independent of $t$. 
Still using Lemma \ref{lem72} we have
\[
\|Dv(s)\|_2^2 \le C\Big( \|\tilde v(s+1) \|_2^2 + \|\tilde v\|^2_{L^2((s, s+1)\times \T)}\Big) \qquad \forall s  \le t - 1.
\]
Hence, from the estimate on $\tilde v$ we deduce that
\begin{equation}\label{dtildeve}
\|Dv(s)\|_2 \le C e^{-\nu(t-s)} \qquad  \forall s \le t - 1.
\end{equation}

By duality between \eqref{rhoeq} and \eqref{veq} we have
\[
\|\rho(t)\|_2 = \int \rho(t_0) v(t_0) + \int_{t_0}^t \int {\rm div}(F) v(s) ds.
\]
Integrating by parts and using the fact that $\int \rho(t_0) = 0$ we get
\begin{equation}\label{eq122}
\|\rho(t)\|_2 \le  \|\rho(t_0)\|_2  \|\tilde v(t_0)\|_2 + \int_{t_0}^{t}  \|D v(s)\|_2 \|F(s)\|_2.
\end{equation}
For $t > t_0+1$ we can split the last integral on $(t_0, t-1)$ and $(t-1, t)$, and apply \eqref{tildeve}, \eqref{dtildeve}, so that
\[
\|\rho(t)\|_2 \le C \|\rho(t_0)\|_2  e^{-\nu (t-t_0)} + C \int_{t_0}^{t-1}  e^{-\nu(t-s)} \|F(s)\|_2  ds + \int^t_{t-1}  \|D v(s)\|_2 \|F(s)\|_2.
\]
Using H\"older's inequality we obtain
\begin{multline}\label{eq123}
\|\rho(t)\|^2_2 \le C \|\rho(t_0)\|^2_2  e^{-2\nu (t-t_0)} + C\left(\int_{t_0}^{t-1}  e^{-\nu(t-s)} ds \right)\left( \int_{t_0}^{t-1}  e^{-\nu(t-s)} \|F(s)\|^2_2  ds \right) \\ + \left(\int^t_{t-1}  \|D v(s)\|^2_2 ds \right) \left(\int^t_{t-1} \|F(s)\|^2_2 ds \right).
\end{multline}
On one hand $\int_{t_0}^{t-1}  e^{-\nu(t-s)} \le \nu^{-1} e^{-\nu}$, and on the other hand $\int^t_{t-1}  \|D v(s)\|^2_2 \le C$ again by Lemma \ref{lem72}.  In addition, we have $1\leq e^{\de(t-s)}$ for  any $s\leq t$ and $\de\geq 0$.
Therefore we get
\begin{multline}\label{eq123bis}
\|\rho(t)\|^2_2 \le C\left\{  \|\rho(t_0)\|^2_2  e^{-2\nu (t-t_0)} +     \int_{t_0}^{t-1}  e^{-\nu(t-s)} \|F(s)\|^2_2  ds    +   \int^t_{t-1} e^{\de (t-s)}\|F(s)\|^2_2 ds  \right\}.
\end{multline}
Now we multiply  \eqref{eq123bis} by $e^{-\de t}$ and we integrate  in $(t_1 \vee t_0 + 1, T)$, obtaining  
\begin{multline*}
\int_{t_1 \vee t_0 + 1}^T e^{-\de t}\|\rho(t)\|^2_2 dt \le C \left\{ \|\rho(t_0)\|^2_2 \int_{t_1 \vee t_0 + 1}^T e^{-2\nu (t-t_0)-\de t}dt   \right.  \\ \left. +  \int_{t_1 \vee t_0 + 1}^T e^{-\de t}\int_{t_0}^{t-1}  e^{-\nu(t-s)} \|F(s)\|^2_2  ds dt + \int_{t_1 \vee t_0 + 1}^T  \int^t_{t-1} e^{-\de s}\|F(s)\|^2_2 ds dt \right\},
\end{multline*}
and by exchanging the order of integration we easily get
\[
\int_{t_1 \vee t_0 + 1}^T e^{-\de t}\|\rho(t)\|^2_2 dt \le C  \left\{ e^{-\de t_1}\|\rho(t_0)\|^2_2 e^{-2\nu (t_1-t_0)}  +  \int_{t_0}^{T-1} e^{-\de s}\|F(s)\|^2_2  ds + \int_{t_0}^T e^{-\de s}\|F(s)\|^2_2 ds \right\}
\]
for a possibly different constant $C$, depending on $\nu$ but  independent of $\de$. Going back to \eqref{eq122}, we get the remaining bound (if needed) on $\int_{t_1}^{t_0+1} \|\rho(t)\|^2_2$ by a straightforward application of \eqref{tildeve} and \eqref{dtildeve}.
\end{proof}

Finally, we include in this Appendix  a regularity lemma on the Fokker-Planck equation.

\begin{lem}\label{stimaRho1}
Let $\rho$ be a non-negative classical solution to
\[
\rho_t - \kappa \Delta \rho - {\rm div}(\rho V) = 0 \qquad \text{in $(0, T) \times \T$}.
\]
Then, for all $\eps > 0$ and $p' < \frac d{d-2}$
\[
\| \rho \|_{L^1((0,T); L^{p'}(\T))} \le \eps \int_0^T \int_{\T} |V|^2 \rho + C\left(1+ \frac 1 \eps\right) \|\rho(0)\|_{L^1(\T)} .
\]
for some $C > 0$ depending on $\kappa, p', T, d$.
\end{lem}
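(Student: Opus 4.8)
The plan is to prove the estimate by duality with the (drift-free) heat equation, reducing everything to scalar bounds for a linear backward heat equation together with mass conservation for $\rho$. Write $p=(p')'$, so that the hypothesis $p'<\tfrac d{d-2}$ is exactly $p>\tfrac d2$ (the case $d\le 2$ being easier). By H\"older duality, $\|\rho\|_{L^1((0,T);L^{p'}(\T))}$ equals the supremum of $\int_0^T\!\int_\T\rho\,\phi$ over all nonnegative $\phi$ with $\|\phi\|_{L^\infty((0,T);L^p(\T))}\le 1$. For such a $\phi$, I would let $w$ solve $-w_t-\kappa\Delta w=\phi$ in $(0,T)\times\T$ with $w(T)=0$; then $w\ge 0$ by the maximum principle, and the heat-kernel bound $\|e^{\kappa r\Delta}g\|_{L^\infty}\le C(\kappa r)^{-\frac d{2p}}\|g\|_{L^p}$ together with $\frac d{2p}<1$ gives $0\le w\le C_0$ on $(0,T)\times\T$, with $C_0=C_0(\kappa,d,p,T)$. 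Testing the equation of $\rho$ against $w$ and integrating by parts yields the duality identity
\[
\int_0^T\!\!\int_\T \rho\,\phi=\int_\T\rho(0)\,w(0)-\int_0^T\!\!\int_\T \rho\,V\cdot Dw .
\]

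The first term is at most $C_0\|\rho(0)\|_{L^1(\T)}$. For the second, set $A:=\int_0^T\int_\T|V|^2\rho$ and $B:=\int_0^T\int_\T\rho|Dw|^2$, so that by Cauchy--Schwarz $|\int\int\rho V\cdot Dw|\le A^{1/2}B^{1/2}$. The naive route of estimating $B$ by H\"older in $L^{p'}$--$L^{p}$ is circular; instead I would control $B$ through the ``mixed energy'' identity obtained by differentiating $\int_\T\rho\,w^2$ and using both equations,
\[
\frac{d}{dt}\int_\T\rho\, w^2=2\kappa\int_\T\rho|Dw|^2-2\int_\T\rho\, w\, V\cdot Dw-2\int_\T\rho\, w\,\phi ,
\]
which, after integration on $(0,T)$ (dropping $-\int_\T\rho(0)w(0)^2\le0$), using $0\le w\le C_0$, Cauchy--Schwarz and Young's inequality, gives $B\le \tfrac{C_0^2}{\kappa^2}A+\tfrac{2C_0}{\kappa}\big(\int_0^T\int_\T\rho\,\phi\big)$. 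Substituting into the duality identity and absorbing, one obtains a bound of the form $\int_0^T\int_\T\rho\,\phi\le C_1\|\rho(0)\|_{L^1(\T)}+C_2\int_0^T\int_\T|V|^2\rho$ with $C_1,C_2$ depending only on $\kappa,d,p,T$; taking the supremum over $\phi$ gives the same bound for $\|\rho\|_{L^1((0,T);L^{p'}(\T))}$, but with a \emph{fixed} constant $C_2$ in front of $\int\int|V|^2\rho$.

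To upgrade this to an arbitrary $\eps>0$ in front of $\int\int|V|^2\rho$, I would run the argument on each piece of a partition of $(0,T)$ into subintervals of length $\tau$: on an interval of length $\tau$ the constant $C_0$ scales like $\tau^{1-\frac d{2p}}\to 0$, hence $C_2=C_2(C_0,\kappa)\to 0$ as $\tau\to 0$, so $\tau$ may be chosen (depending on $\eps,\kappa,d,p$) with $C_2\le\eps$; summing the $\lceil T/\tau\rceil$ contributions and using the mass conservation $\|\rho(t_{k-1})\|_{L^1(\T)}=\|\rho(0)\|_{L^1(\T)}$ for the data of each subinterval produces $\eps\int\int|V|^2\rho+C(1+\tfrac1\eps)\|\rho(0)\|_{L^1(\T)}$. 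The main obstacle is exactly the estimate for $B=\int\int\rho|Dw|^2$: since no a priori control on $\rho$ is available beyond $\rho\in L^\infty((0,T);L^1(\T))$ and mass conservation, the H\"older bound on $B$ feeds back the very quantity being estimated, and it is the mixed-energy identity above that breaks the circularity; a secondary, bookkeeping-type difficulty is tracking the constants through the time-localization so as to recover the claimed $1/\eps$ dependence. An alternative, slightly less transparent route avoids the mixed-energy identity by using the Duhamel formula $\rho(t)=e^{\kappa t\Delta}\rho(0)+\int_0^t D\!\cdot\! e^{\kappa(t-s)\Delta}(\rho V)\,ds$ together with $\|(\rho V)(s)\|_{L^1(\T)}\le\|\rho(0)\|_{L^1(\T)}^{1/2}\big(\int_\T\rho|V|^2\big)^{1/2}$ and the gradient heat-kernel bounds; this closes directly when $p>d$ and needs only a finite bootstrap on the space integrability of $\rho V$ to cover the full range $p>\tfrac d2$.
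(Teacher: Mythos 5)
Your primary route is correct, but it is genuinely different from the paper's. The paper proves the lemma by writing the Duhamel representation $\rho = \rho(0)\star G + (\rho V)\star\star D_x G$ on the torus, applying Young's convolution inequality in mixed $L^1_t L^r_x$ norms, and then splitting $\rho V = (\rho^{1/\gamma}V)(\rho^{1/\gamma'})$ by H\"older for a carefully chosen $\gamma\in(d/p,2)$ so that the gradient heat kernel is integrable in $L^1_t L^{\frac{p'\gamma}{p'\gamma-p'+1}}_x$; this yields $\|\rho\|_{L^1L^{p'}}\lesssim \|\rho(0)\|_{L^1} + \int\!\!\int |V|^\gamma\rho$, from which the $\eps$-form follows by a scalar Young inequality with exponent $2/\gamma$. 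Your argument instead works by duality against the backward heat equation $-w_t-\kappa\Delta w=\phi$, $w(T)=0$, exploiting $0\le w\le C_0$, and then breaking the circularity in bounding $B=\int\!\!\int\rho|Dw|^2$ via the mixed-energy identity $\frac{d}{dt}\int\rho w^2 = 2\kappa\int\rho|Dw|^2 - 2\int\rho w V\!\cdot\!Dw - 2\int\rho w\phi$, which controls $B$ by $A=\int\!\!\int|V|^2\rho$ and the very quantity $\int\!\!\int\rho\phi$ being estimated, closing by absorption. What your approach buys is that it avoids Young's inequality for convolutions and the auxiliary parameter $\gamma$ entirely; the mixed-energy identity is a nice observation, and the time-localization to make the coefficient of $A$ small is a clean substitute for the paper's final scalar Young step. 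What the paper's approach buys is a more direct derivation without the backward problem and without the localization argument, and in particular it handles the whole range $p>d/2$ in one pass, whereas your ``alternative route'' via Duhamel with a Cauchy--Schwarz split $\|\rho V\|_{L^1}\le\|\rho(0)\|_{L^1}^{1/2}(\int\rho|V|^2)^{1/2}$ only closes for $p>d$ and would need a bootstrap (precisely the issue the paper's $\gamma<2$ trick is designed to sidestep). One small caveat common to both arguments: the exact $1/\eps$ in the statement is not what either proof literally produces --- the paper's scalar Young step gives $\eps^{-\gamma/(2-\gamma)}$ and your time-localization, with $C_0(\tau)\sim\tau^{1-d/(2p)}$ and $N\sim T/\tau$, gives $\eps$-dependence with a power $>1$ whenever $p<d$ --- but since the lemma is only ever invoked with $\eps$ fixed, this is harmless in both cases.
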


\begin{proof} Let $G(x,t)$ be the  kernel of $\partial_t(\cdot)-\kappa \Delta (\cdot)$ (on $\T$), and denote by $\star$ and $\star\star$ the space and space-time convolution respectively.
Then, the Duhamel representation formula yields
\[
\rho = \rho(0) \star G(t) + (\rho V) \star\star \, D_x G.
\]
For $q>1$ and $\gamma \in (1,2)$ to be chosen below, Young's inequality for convolutions yields
\begin{multline}\label{eq1423}
\|\rho\|_{L^1((0,T); L^{p'}(\T))} \le \|\rho(0)\|_{L^1(\T)} \|G\|_{L^1((0,T); L^{p'}(\T))} \\ + C \|\rho V\|_{L^{1}\big((0,T); L^{\frac{p'\gamma}{p'+\gamma-1}}(\T)\big)}
\|D_x G\|_{L^{1}\big((0,T); L^{\frac{p'\gamma}{p' \gamma - p' +1}}(\T)\big)}.
\end{multline}
By H\"older's inequality
\begin{align*}
\|\rho V\|_{L^{1}\big((0,T); L^{\frac{p'\gamma}{p'+\gamma-1}}(\T)\big)} & \le \|\rho^{\frac1\gamma} V\|_{L^\gamma((0, T) \times \T)} \|\rho^{\frac1{\gamma'}}\|_{L^{\gamma'}\big((0,T); L^{p'\gamma'}(\T)\big)} \\
& = \left(\int_0^T \int_{\T} |V|^\gamma \rho \right)^{\frac1\gamma} \|\rho\|^{\frac1{\gamma'}}_{L^{1}\big((0,T); L^{p'}(\T)\big)}.
\end{align*}
Moreover, $G$ and $D_xG$ are bounded in $L^1((0,T); L^{p'}(\T))$ and $L^{1}\big((0,T); L^{\frac{p'\gamma}{p' \gamma - p' +1}}(\T)\big)$ respectively, provided that
\[
\frac{2}{dp'}-\frac d 2 + 1 > 0, \qquad \frac d 2 \frac{p' \gamma + p' -1}{p'\gamma} - \frac d 2 +\frac 1 2 >0,
\]
that is equivalent to
\[
p' < \frac d{d-2}, \qquad p'< \frac d{d-\gamma}.
\]
The first inequality is true by the standing assumptions. Pick then $\gamma < 2$ so that $p'< \frac d{d-\gamma} < \frac d{d-2}$. Then,
\begin{multline*}
\|\rho(0)\|_{L^1(\T)} \|G\|_{L^1((0,T); L^{p'}(\T))} + C \|\rho V\|_{L^{1}\big((0,T); L^{\frac{p'\gamma}{p'+\gamma-1}}(\T)\big)}
\|D_x G\|_{L^{1}\big((0,T); L^{\frac{p'\gamma}{p' \gamma + p' -1}}(\T)\big)} \\ \le
C_1 \|\rho(0)\|_{L^1(\T)} + C_1 \left(\int_0^T \int_{\T} |V|^\gamma \rho \right)^{\frac1\gamma} \|\rho\|^{\frac1{\gamma'}}_{L^{1}\big((0,T); L^{p'}(\T)\big)}.
\end{multline*}
Using Young's inequality, and plugging back into \eqref{eq1423}, we obtain
\[
\|\rho\|_{L^1((0,T); L^{p'}(\T))} \le C_2 \|\rho(0)\|_{L^1(\T)} + C_2 \left(\int_0^T \int_{\T} |V|^\gamma \rho \right).
\]
A further application of Young's inequality yields, for any $\eps > 0$,
\[
\|\rho\|_{L^1((0,T); L^{p'}(\T))} \le C_2 \|\rho(0)\|_{L^1(\T)} + \eps \left(\int_0^T \int_{\T} |V|^2 \rho \right)+ \frac {C_2^{\frac2{2-\gamma}}} \eps \left(\int_0^T \int_{\T} \rho \right) ,
\]
and since $\int_{\T} \rho (t) = \int_{\T} \rho (0)$ for all $t$, we conclude.
\end{proof}

{\bf Acknowledgement.} This research was partially supported by Indam Gnampa project 2019.  We also thank the anonymous reviewers for their careful reading of the first version of the paper, and for their suggestions of related references.

\end{document}